\documentclass[10pt]{amsart}
\usepackage{amsmath,amsthm, epsfig, amscd}
\usepackage{psfrag}
\usepackage[psamsfonts]{amssymb}
\usepackage[all]{xy}
\usepackage{mathrsfs}

\numberwithin{equation}{section}

\newtheorem{Thm}{Theorem}[section]
\newtheorem{Prop}[Thm]{Proposition}
\newtheorem{Lem}[Thm]{Lemma}

\newtheorem{Cor}[Thm]{Corollary}

\theoremstyle{remark}
\newtheorem{Rem}[Thm]{Remark}

\theoremstyle{definition}

\newtheorem{Assum}[Thm]{Assumption}

\newcommand{\mysection}[2]{%
\vspace{2mm}\section{\bf #1}\label{#2}
}


\setlength{\headheight}{30pt}
\setlength{\topmargin}{0cm} 
\setlength{\oddsidemargin}{1.9cm} 
\setlength{\evensidemargin}{1.9cm} 

\def\Z{{\mathbb Z}}
\def\R{{\mathbb R}}
\def\Q{{\mathbb Q}}
\def\C{{\mathbb C}}
\def\calA{\mathscr{A}}

\def\calD{\mathscr{D}}

\def\deg{\mathrm{deg}}

\def\Conf{\mathrm{Conf}}
\def\bConf{\overline{\Conf}}
\def\bC{\overline{C}}

\def\Diff{\mathrm{Diff}}

\newcommand{\mapright}[1]{
	\smash{\mathop{
		\hbox to 1cm{\rightarrowfill}}\limits^{#1}}}

\newcommand{\mapleft}[1]{
	\smash{\mathop{
		\hbox to 1cm{\leftarrowfill}}\limits^{#1}}}

\def\Z{\mathbb{Z}}
\def\Q{\mathbb{Q}}
\def\C{\mathbb{C}}
\def\R{\mathbb{R}}
\def\F{\mathbb{F}}
\def\calA{\mathscr{A}}
\def\calD{\mathscr{D}}

\def\End{\mathrm{End}}

\def\Tr{\mathrm{Tr}}

\def\ve{\varepsilon}

\def\even{\mathrm{even}}
\def\odd{\mathrm{odd}}
\def\Sym{\mathrm{Sym}}

\def\tbigwedge{\textstyle\bigwedge}
\def\Map{\mathrm{Map}}

\begin{document}

\title[Unstable pseudo-isotopies of spherical 3-manifolds]{Unstable pseudo-isotopies of spherical 3-manifolds}
\author{Yuji Ohta}
\author{Tadayuki Watanabe}

\date{\today}

{\noindent\footnotesize {\rm Preprint} (2023)}\par\vspace{15mm}
\maketitle
\vspace{-6mm}
\setcounter{tocdepth}{2}
\begin{abstract}
In our previous works, we constructed diffeomorphisms of compact 4-manifolds $X$ by surgeries on theta-graphs embedded in $X$. In this paper, we consider the case $X=M\times I$, where $M$ is a spherical 3-manifold. For some of such $X$, we compute lower bounds of the ranks of the abelian groups $\pi_0\Diff(X,\partial)$. We study the behavior of the elements constructed by theta-graph surgery under the suspension functor in stable pseudo-isotopy theory, and their triviality in the space of block diffeomorphisms.
\end{abstract}
\par\vspace{3mm}

\def\baselinestretch{1.06}\small\normalsize
\mysection{Introduction}{s:intro}

For a finite group $\pi$, let $(\pi\times\pi)\rtimes \Z_2$ be the semidirect prodct of $\pi\times\pi$ and $\Z_2$ with respect to the homomorphism $\psi\colon \Z_2=\{1,\tau\}\to \mathrm{Aut}(\pi\times \pi)$; $\tau\mapsto ((x,y)\mapsto (y,x))$. For a left $(\pi\times\pi)\rtimes \Z_2$-module $W$, let 
\[ \begin{split}
  &\calA_\Theta^\even(W):=H^0((\pi\times\pi)\rtimes \Z_2;\tbigwedge^3 W)=
\bigl(\tbigwedge^3 W\bigr)^{(\pi\times\pi)\rtimes \Z_2},\\
  &\calA_\Theta^\odd(W):=H^0((\pi\times\pi)\rtimes \Z_2;\Sym^3 W)=\bigl(\Sym^3\,W\bigr)^{(\pi\times\pi)\rtimes \Z_2},
\end{split} \]
where we take the invariant parts. One can take $W=\C[\pi]$, which is both a left $\pi\times\pi$-module by $(g,h)\cdot x\mapsto gxh^{-1}$, and a $\Z_2$-module by the involution $x\otimes y\otimes z\mapsto x^{-1}\otimes y^{-1}\otimes z^{-1}$ ($x,y,z\in\pi$). Also, one can take $W=\mathrm{Ker}\,\ve$ for the augmentation map $\ve:\C[\pi]\to \C$, which is a $(\pi\times\pi)\rtimes\Z_2$-submodule of $\C[\pi]$. The definitions of the $\calA_\Theta$-spaces are motivated by the space of $\pi$-decorated 2-loop graphs in \cite{GL}. Let $\Diff_0(X,\partial)$ denote the subgroup of $\Diff(X,\partial)$ of diffeomorphisms homotopic to the identity.
In this paper, we consider the 4-manifold $X=M\times I$, where $M$ is a spherical (or elliptic) 3-manifold, i.e. $M=S^3/\Gamma$ for a finite subgroup $\Gamma\subset SO(4)$ acting freely on $S^3$ (e.g., \cite[\S{1.2.1}]{Sa}). 
The following theorem is essentially given in \cite[Proposition~7.1 and Remark~7.2]{Wa20}.

\begin{Thm}\label{thm:1}
Let $M$ be a spherical 3-manifold, $\pi=\pi_1M$, and let $X=M\times I$. 
Let $W$ be a $\C[(\pi\times\pi)\rtimes\Z_2]$-module satisfying Assumption~\ref{assum:2-acyclic}. Then a homomorphism
\[ Z_\Theta^\even\colon \Gamma_0(X)\otimes\C\to \calA_\Theta^\even(W) \]
from a certain subgroup $\Gamma_0(X)$ of $\pi_1 B\Diff_0(X,\partial)$ is defined, and if $W=\mathrm{Ker}\,\ve$, it is surjective. Hence we have
\[ \dim \pi_0\Diff_0(X,\partial)\otimes\Q\geq \dim\calA_\Theta^\even(\mathrm{Ker}\,\ve). \]
\end{Thm}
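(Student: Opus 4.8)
The plan is to follow the configuration space integral method of \cite{Wa20}, adapting its Proposition~7.1 to the present decorated setting. First I would construct the invariant $Z_\Theta^\even$. Given a class in $\Gamma_0(X)\subset\pi_1 B\Diff_0(X,\partial)$, represent it by a loop of diffeomorphisms, equivalently by a smooth $X$-bundle $E\to S^1$ equipped with a fixed trivialization near $\partial X\times S^1$. On the fiberwise Fulton--MacPherson compactified configuration space $\bConf_2(E/S^1)$ I would place, on each of the three edges of the theta graph, an $(\pi\times\pi)\rtimes\Z_2$-equivariant closed propagator valued in $W$; since $\dim X=4$ such a propagator is a $3$-form, and the hypothesis Assumption~\ref{assum:2-acyclic} is precisely what guarantees that a fiberwise propagator with the prescribed diagonal behaviour exists and is unique up to the expected ambiguity. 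As $\bConf_2(E/S^1)$ is $9$-dimensional and the wedge of the three $3$-forms is a top form, the invariant is the integral
\[ Z_\Theta^\even=\int_{\bConf_2(E/S^1)}\omega_{e_1}\wedge\omega_{e_2}\wedge\omega_{e_3}. \]
Because each $\omega_{e_i}$ has odd degree, the integrand is antisymmetric under the $S_3$ permuting the three edges, so it naturally takes values in $\tbigwedge^3 W$; the residual symmetry $(\pi\times\pi)\rtimes\Z_2$ (deck transformations at the two vertices together with the vertex swap) forces the value into the invariant part, that is, into $\calA_\Theta^\even(W)$.

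Next I would check that $Z_\Theta^\even$ is well defined and a homomorphism. Well-definedness amounts to independence of the propagator and of the chosen representative loop, which reduces to the vanishing, after projection to the invariant quotient, of the contributions of the codimension-one faces of $\bConf_2(E/S^1)$ (the principal faces and the hidden/anomalous faces), together with the Stokes argument that replacing a propagator by a cohomologous one alters the integral by an exact term. Additivity under concatenation of loops then yields the homomorphism property. I expect this step to be the main obstacle: controlling the face contributions and the anomaly is the technical heart of every configuration space integral construction. Here, however, it is essentially supplied by \cite{Wa20} once Assumption~\ref{assum:2-acyclic} is imposed, so the work is to verify that the argument survives the $W$-decoration.

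Then I would prove surjectivity for $W=\mathrm{Ker}\,\ve$ by exhibiting explicit realizing families. For a spanning set of $\calA_\Theta^\even(\mathrm{Ker}\,\ve)$ I would embed a correspondingly decorated theta graph in $X=M\times I$ and take the associated theta-graph surgery family, an element of $\Gamma_0(X)$. By locality of the integral the leading contribution concentrates near the surgery locus and can be evaluated in a local model, where it reduces to a product of linking numbers; this computation shows that $Z_\Theta^\even$ applied to the surgery family on a theta graph decorated by $w$ returns $w$ up to a fixed nonzero normalization. Letting $w$ range over the decorations then produces a spanning set inside the image, which gives surjectivity.

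Finally I would deduce the dimension bound. Since $\Diff_0(X,\partial)$ is a topological group, one has $\pi_1 B\Diff_0(X,\partial)\cong\pi_0\Diff_0(X,\partial)$, and $\Gamma_0(X)$ is a subgroup of this $\pi_0$. Tensoring the established surjection with $\C$ gives
\[ \dim_\C\bigl(\pi_0\Diff_0(X,\partial)\otimes\C\bigr)\geq\dim_\C\bigl(\Gamma_0(X)\otimes\C\bigr)\geq\dim_\C\calA_\Theta^\even(\mathrm{Ker}\,\ve), \]
and since the rank of the abelian group $\pi_0\Diff_0(X,\partial)$ is computed equally by $\otimes\Q$ and by $\otimes\C$, the asserted inequality over $\Q$ follows.
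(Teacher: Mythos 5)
Your route is the paper's route: both follow \cite{Wa20}, using Assumption~\ref{assum:2-acyclic} to produce propagators in the family of two-point configuration spaces, a theta-surgery evaluation for surjectivity, and $\pi_1B\Diff_0(X,\partial)\cong\pi_0\Diff_0(X,\partial)$ (together with homotopy commutativity and flatness of $\Q$) for the final inequality; phrasing the invariant as an integral of $W$-valued forms rather than as the equivariant triple intersection $\frac{1}{6}\Tr_\Theta\langle\widetilde{\omega}_1,\widetilde{\omega}_2,\widetilde{\omega}_3\rangle_\Theta$ of three $6$-chains with twisted coefficients is only a dualization. The genuine gap is at the point you pass over fastest: the definition of $Z_\Theta^\even$ on $\Gamma_0(X)$. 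A bare loop in $B\Diff_0(X,\partial)$ does not determine the two auxiliary structures your construction needs, namely (i) the prescribed behaviour of the propagator along the diagonal face, which is written in terms of a vertical framing (the chain $s_\tau(E)\otimes e_W^i$), and (ii) the $W$-coefficient system on $E\bConf_2(p)$, which is pulled back via a fiberwise degree-one map. What \cite{Wa20} actually provides is an invariant of the structured group $\pi_1\widetilde{B\Diff}_{\mathrm{deg}}(M\times I,\partial)$; the subgroup $\Gamma_0(X)$ is \emph{defined} as the image of this group in $\pi_1B\Diff_0(M\times I,\partial)$, and the step that is new in this paper (Lemma~\ref{lem:Z-descends}) is that the value is independent of the choice of lift. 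That argument uses that the framing ambiguity lies in a quotient of $\pi_5SO_4=\Z_2\oplus\Z_2$, which must map trivially to the $\C$-vector space $\calA_\Theta^\even(W)$, and that the coefficient system is canonical because $\pi_1E\cong\pi_1S^1\times\pi_1M$ by van Kampen for bundles with structure group $\Diff_0(M\times I,\partial)$. Without this descent argument you have not defined a homomorphism on $\Gamma_0(X)$ at all, and indeed you never say what $\Gamma_0(X)$ is, although producing this subgroup is part of the statement.

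A smaller but real defect is in your surjectivity step. Embedded theta graphs in $M\times I$ are decorated by triples $(g_1,g_2,g_3)$ of elements of $\pi$, not by arbitrary elements of $\calA_\Theta^\even(\mathrm{Ker}\,\ve)$, and the surgery formula (Proposition~\ref{prop:surgery}) gives $Z_\Theta^\even(\Psi_1(\Theta(g_1,g_2,g_3)))=2\,[\rho_W(g_1)\wedge\rho_W(g_2)\wedge\rho_W(g_3)]$. So surjectivity is not ``decorate by $w$ and recover $w$''; it is the claim that these particular classes span $\calA_\Theta^\even(W)$, which holds because $\rho_W\colon\C[\pi]\to W$ is surjective when $W=\mathrm{Ker}\,\ve\cong\bigoplus_{i\neq[1]}\End(A_i)$ (the decomposition (\ref{eq:cpi})) and because the projection $W^{\otimes 3}\to\calA_\Theta^\even(W)$ onto the invariant part is surjective by averaging over the finite group. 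As written, your argument never invokes the hypothesis $W=\mathrm{Ker}\,\ve$, so it cannot be complete: for a general $W$ satisfying Assumption~\ref{assum:2-acyclic} the realizable classes are still constrained to the image of $\rho_W^{\otimes 3}$.
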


Note that for $X=M\times I$, the group $\Diff(X,\partial)$ is homotopy commutative (\cite[2.6.1~Lemma]{ABK}) and hence $\pi_0\Diff_0(X,\partial)$ is abelian. Theorem~\ref{thm:1} can be obtained by modifying a few points (\S\ref{s:homol} and \S\ref{s:Z} below) in \cite[Proposition~7.1 and Remark~7.2]{Wa20}, where we defined an invariant of diffeomorphisms of $X$ with certain structures by a method similar to \cite{Mar,Les1,Les2} (``equivariant triple intersection'' in a configuration space). The aim of this paper is to give some examples of computations of the lower bound in Theorem~\ref{thm:1}. 

The lower bound $\dim\calA_\Theta^\even(\mathrm{Ker}\,\ve)$ of Theorem~\ref{thm:1} can be computed by simple calculations. We give a few examples: the Poincar\'{e} homology sphere $\Sigma(2,3,5)$ and the lens spaces $L(n,q)$. It is known that $\pi_1\Sigma(2,3,5)$ is isomorphic to $\mathrm{SL}_2(\F_5)$, which is also known as the binary icosahedral group, and $\pi_1L(n,q)\cong \Z_n$ (e.g., \cite[\S{1.1.2}, \S{1.2.1}]{Sa}). 

\begin{Prop}[Poincar\'{e} homology sphere, Proposition~\ref{prop:ex1}]
When $\pi=\mathrm{SL}_2(\F_5)$, we have the following.
\begin{enumerate}
\item $\dim \calA_\Theta^\even(\mathrm{Ker}\,\ve)=\dim \calA_\Theta^\even(\C[\pi])=27$.
\item $\dim \calA_\Theta^\odd(\mathrm{Ker}\,\ve)=56$, $\dim \calA_\Theta^\odd(\C[\pi])=65$.
\end{enumerate}
\end{Prop}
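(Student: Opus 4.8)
The plan is to reduce the statement to a finite character computation for $G:=(\pi\times\pi)\rtimes\Z_2$ acting on the symmetric and exterior cubes of $W$, and then to evaluate it from the character table of $\mathrm{SL}_2(\F_5)$. For any finite-dimensional $G$-module $V$ one has $\dim V^G=\frac{1}{|G|}\sum_{g\in G}\chi_V(g)$, and the cubic plethysm identities
\[ \chi_{\Sym^3 W}(g)=\tfrac16\bigl(\chi_W(g)^3+3\chi_W(g)\chi_W(g^2)+2\chi_W(g^3)\bigr), \]
\[ \chi_{\bigwedge^3 W}(g)=\tfrac16\bigl(\chi_W(g)^3-3\chi_W(g)\chi_W(g^2)+2\chi_W(g^3)\bigr) \]
express everything through $\chi_W$ evaluated at $g,g^2,g^3$. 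Since $|G|=2|\pi|^2$, I would split the sum over the even elements $(a,b)$ and the odd elements $(a,b)\tau$; for $g=(a,b)\tau$ one computes $g^2=(ab,ba)$ and $g^3=(aba,bab)\tau$, so the whole computation reduces to evaluating $\chi_W$ at even and odd elements of $G$.

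Next I would compute $\chi_W$ explicitly for $W=\C[\pi]$. On an even element, column orthogonality for the regular bimodule gives $\chi_W(a,b)=|C_\pi(a)|$ when $a$ and $b$ are conjugate in $\pi$ and $0$ otherwise. On an odd element $\tau$ acts by $x\mapsto x^{-1}$, so $(a,b)\tau$ sends $x\mapsto ax^{-1}b^{-1}$ and its trace is the twisted count $\#\{x\in\pi:xbx=a\}$; this is the point where the Frobenius--Schur theory of $\pi$ enters. Organizing both sums by the nine conjugacy classes of $\mathrm{SL}_2(\F_5)$ (with their known centralizer orders and power maps) turns the problem into a finite sum over class data.

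For the comparison between $W=\C[\pi]$ and $W=\mathrm{Ker}\,\ve$ I would use the $G$-equivariant splitting $\C[\pi]=\mathbf1\oplus\mathrm{Ker}\,\ve$ (the line spanned by $\sum_x x$ is fixed by $\pi\times\pi$ and by $\tau$). Expanding the cubes of a direct sum gives $\bigwedge^3\C[\pi]=\bigwedge^3\mathrm{Ker}\,\ve\oplus\bigwedge^2\mathrm{Ker}\,\ve$ and $\Sym^3\C[\pi]=\Sym^3\mathrm{Ker}\,\ve\oplus\Sym^2\mathrm{Ker}\,\ve\oplus\mathrm{Ker}\,\ve\oplus\mathbf1$. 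Taking $G$-invariants, the even equality $27=27$ is equivalent to $(\bigwedge^2\mathrm{Ker}\,\ve)^G=0$, while the odd difference $65-56=9$ is equivalent to $\dim(\Sym^2\mathrm{Ker}\,\ve)^G+1=9$. Both quadratic invariants are easy: writing $\mathrm{Ker}\,\ve=\bigoplus_{\rho\neq\mathbf1}\rho\boxtimes\rho^*$, the $\pi\times\pi$-invariants of the square are one-dimensional for each self-dual $\rho$, and the swap acts on each by $\nu(\rho)^2=+1$; since every irreducible of $\mathrm{SL}_2(\F_5)$ is self-dual, all eight such invariants are symmetric, giving $(\bigwedge^2\mathrm{Ker}\,\ve)^{\pi\times\pi}=0$ and $(\Sym^2\mathrm{Ker}\,\ve)^{\pi\times\pi}=8$ (one then checks $\tau$ fixes all eight).

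The main obstacle is the odd part: evaluating the twisted traces $\#\{x:xbx=a\}$ and, more seriously, controlling how the order-two involution $\tau$ (swap-and-invert) interacts with the $S_3$ permuting the three tensor factors inside $\Sym^3$ and $\bigwedge^3$. This interaction is governed by the Frobenius--Schur indicators and the power maps of $\mathrm{SL}_2(\F_5)$, which I would record first: all nine irreducibles are self-dual, with $\nu(\rho)=+1$ for the representations of dimensions $1,3,3,4,5$ (those factoring through $A_5$) and $\nu(\rho)=-1$ for the faithful ones of dimensions $2,2,4,6$ --- consistent with $\sum_\rho\nu(\rho)\dim\rho=\#\{g\in\pi:g^2=1\}=2$. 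With this data in hand the remaining summation is routine (and readily confirmed on a computer), yielding the stated values $27$, $56$, and $65$.
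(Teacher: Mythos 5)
Your proposal is correct, and its skeleton is the same as the paper's (Lemma~\ref{lem:ohta} combined with Proposition~\ref{prop:A_CG_Ker_e}): the invariant-dimension formula plus the cubic plethysm identities over $(\pi\times\pi)\rtimes\Z_2$, a split of the sum into even elements $(a,b)$ and odd elements $(a,b)\tau$, and the decomposition $\C[\pi]=U\oplus\mathrm{Ker}\,\ve$ ($U$ the trivial line) reducing the comparison of $\C[\pi]$ with $\mathrm{Ker}\,\ve$ to quadratic invariants. The two places where you genuinely diverge are worth recording. First, for the odd elements the paper asserts that $\tau$ acts on each block $A_i\boxtimes A_i$ by the plain flip, so that $\chi_W(\tau\cdot(g,h))=\sum_i\chi_{A_i}(hg)$; your twisted fixed-point count $\#\{x:xbx=a\}=\#\{y:y^2=ab\}=\sum_i\nu(A_i)\chi_{A_i}(ab)$ carries the Frobenius--Schur indicators, and it is in fact the correct general statement: the inversion $x\mapsto x^{-1}$ acts on $\mathrm{End}(A_i)\cong A_i\boxtimes A_i$ by $\nu(A_i)$ times the flip (its trace on $\C[\pi]$ is $\#\{x:x^2=1\}=2$, not $\sum_i\dim A_i=30$). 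The paper's signless formula nevertheless yields the same totals here, because for $\mathrm{SL}_2(\F_5)$ one has $\nu(A_i)\chi_{A_i}(u)=\chi_{A_i}(-u)$ (the faithful irreducibles send $-I$ to $-\mathrm{id}$ and are exactly the quaternionic ones), so the sign discrepancy is absorbed by the bijection $u\mapsto -u$ in the sum over $\pi$; your version is the more robust one, and your consistency check $\sum_i\nu(A_i)\dim A_i=2$ is exactly the right control. Second, for the quadratic invariants the paper proves a statement valid for \emph{every} finite group $G$, namely $\calA_\Theta^\even(\C[G])=\calA_\Theta^\even(\mathrm{Ker}\,\ve)$ and $\calA_\Theta^\odd(\C[G])\cong\calA_\Theta^\odd(\mathrm{Ker}\,\ve)\oplus(\C\hat{G})_{\Z_2}$, by explicit elementary maps, and then inputs Lemma~\ref{lem:involution-inv} (all nine classes of $\mathrm{SL}_2(\F_5)$ are inverse-closed) to get the correction term $9$; you instead compute $(\tbigwedge^2\mathrm{Ker}\,\ve)^{\pi\times\pi}=0$ and $\dim(\Sym^2\mathrm{Ker}\,\ve)^{G}=8$ from self-duality of all nine irreducibles and $\nu^2=+1$. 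Both give $27$ and $65-9=56$; the paper's route buys generality (Proposition~\ref{prop:A_CG_Ker_e} is reused verbatim for the lens spaces in Proposition~\ref{prop:ex2}), while yours is shorter for this particular group---note only that your step ``one then checks $\tau$ fixes all eight'' is precisely what the paper's explicit isomorphism onto $(\C\hat{G})_{\Z_2}$ supplies. Finally, like the paper (which defers the numerics to the Maxima computation in \cite{Oh}), you defer the final summation over class data to a machine; that is bookkeeping, not a gap.
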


The case of lens spaces can be computed by a more elementary method.
\begin{Prop}[Lens spaces, Proposition~\ref{prop:ex2}]
Let $\pi=\Z_n$ ($n\geq 1$), and for an integer $m\geq 0$, let $p_3(m)$ be the number of partitions of $m$ into at most three parts, namely, the number of integer solutions of the equation $x+y+z=m$ ($0\leq x\leq y\leq z$). We set $p_3(m)=0$ for $m<0$. Then we have the following.
\begin{enumerate}
\item $\dim\calA_\Theta^{\odd}(\C[\pi])=p_3(n)$.
\item $\dim\calA_\Theta^{\even}(\C[\pi])=p_3(n-6)$.
\item $\dim\calA_\Theta^{\odd}(\mathrm{Ker}\,\ve)=p_3(n-3)$.
\item $\dim\calA_\Theta^{\even}(\mathrm{Ker}\,\ve)=\dim\calA_\Theta^{\even}(\C[\pi])=p_3(n-6)$.
\end{enumerate}
\end{Prop}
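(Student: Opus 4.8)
The plan is to reduce everything to a finite character computation over $G=(\pi\times\pi)\rtimes\Z_2$, using that for $\pi=\Z_n$ the module $\C[\pi]$ is a permutation representation. First I would diagonalize the $\pi\times\pi$-action: writing $\zeta=e^{2\pi i/n}$ and $w_j=\sum_{x\in\Z_n}\zeta^{jx}e_x$ for $j\in\Z_n$, the pair $(a,b)$ acts by $w_j\mapsto\zeta^{-(a-b)j}w_j$ (so the $\pi\times\pi$-action factors through $a-b$ and scales a monomial by $\zeta^{-(a-b)(j_1+j_2+j_3)}$), while $\tau$ sends $w_j\mapsto w_{-j}$, and $\ve(w_0)=n$ with $\ve(w_j)=0$ for $j\neq0$. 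Thus $\mathrm{Ker}\,\ve=\bigoplus_{j\neq0}\C w_j$ is a $G$-invariant complement of the trivial summand $\C w_0$, i.e. $\C[\pi]=\mathrm{Ker}\,\ve\oplus\mathbf{1}$ as $G$-modules. Equivalently, and more usefully for the count, $G$ acts on the basis $\{e_x\}_{x\in\Z_n}$ through the affine maps $x\mapsto\pm x+t$ of $\Z_n$, so $\chi_{\C[\pi]}(g)$ is simply the number of fixed points of $g$ on $\Z_n$.

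Then I would prove (1) and (2) directly by Burnside's formula $\dim V^G=\tfrac1{|G|}\sum_{g\in G}\chi_V(g)$ applied to $V=\Sym^3\C[\pi]$ and $V=\tbigwedge^3\C[\pi]$, together with the standard identities $\chi_{\Sym^3 W}(g)=\tfrac16\bigl(\chi_W(g)^3+3\chi_W(g)\chi_W(g^2)+2\chi_W(g^3)\bigr)$ and $\chi_{\tbigwedge^3 W}(g)=\tfrac16\bigl(\chi_W(g)^3-3\chi_W(g)\chi_W(g^2)+2\chi_W(g^3)\bigr)$. For $g\colon x\mapsto x+t$ the fixed-point count is $n$ if $t=0$ and $0$ otherwise, with $g^2\colon x\mapsto x+2t$ and $g^3\colon x\mapsto x+3t$; for the reflections $g\colon x\mapsto -x+t$ one has $g^2=\mathrm{id}$ and $g^3=g$, and the number of fixed points is the number of solutions of $2x=t$, namely $0$ or $\gcd(2,n)$. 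Carrying out the two sums yields closed forms for $\dim\calA_\Theta^{\odd}(\C[\pi])$ and $\dim\calA_\Theta^{\even}(\C[\pi])$ as quasi-polynomials in $n$ whose only non-polynomial ingredients are $\gcd(2,n)$ and $\gcd(3,n)$.

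With (1) and (2) established, (3) and (4) follow formally from $\C[\pi]=\mathrm{Ker}\,\ve\oplus\mathbf{1}$. Writing $K=\mathrm{Ker}\,\ve$, one has $\Sym^3\C[\pi]=\Sym^3 K\oplus\Sym^2 K\oplus K\oplus\C$ and $\tbigwedge^3\C[\pi]=\tbigwedge^3 K\oplus\tbigwedge^2 K$ as $G$-modules, the extra factors coming from the trivial line $\mathbf{1}=\C w_0$. Taking $G$-invariants, I would check that $K^G=0$ (no index $j\neq0$ is $\equiv0$), that $(\Sym^2 K)^G$ has basis $\{w_iw_{n-i}\}_{1\le i\le\lfloor n/2\rfloor}$ and hence dimension $\lfloor n/2\rfloor$, and that $(\tbigwedge^2 K)^G=0$ because $\tau$ sends $w_i\wedge w_{n-i}$ to $-w_i\wedge w_{n-i}$. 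The vanishing of $(\tbigwedge^2 K)^G$ gives $\dim\calA_\Theta^{\even}(K)=\dim\calA_\Theta^{\even}(\C[\pi])$ immediately, proving (4) from (2); and $\dim\calA_\Theta^{\odd}(K)=\dim\calA_\Theta^{\odd}(\C[\pi])-\lfloor n/2\rfloor-1$ reduces (3) to the partition identity $p_3(n)-p_3(n-3)=\lfloor n/2\rfloor+1$, which is the coefficient extraction from $\tfrac{1-x^3}{(1-x)(1-x^2)(1-x^3)}=\tfrac1{(1-x)(1-x^2)}$.

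The main obstacle is the final identification in (1) and (2): matching the Burnside quasi-polynomials with $p_3$. I would invoke the closed form $p_3(m)=\mathrm{round}\bigl((m+3)^2/12\bigr)$ for $m\ge0$ and verify $\dim\calA_\Theta^{\odd}(\C[\pi])=p_3(n)$ and $\dim\calA_\Theta^{\even}(\C[\pi])=p_3(n-6)$ by running through the six residue classes of $n$ modulo $6$, on which $\gcd(2,n)$, $\gcd(3,n)$, and the period-$6$ correction hidden in the rounding all become constant. This bookkeeping is elementary but delicate: the reflection contributions (carrying the $\gcd(2,n)$ and the $g^2=\mathrm{id}$ terms) are exactly where parity and sign errors are most likely, so I would cross-check the final formulas against the small cases $n=1,2,3,6$ before trusting the residue computation.
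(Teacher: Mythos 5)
Your proposal is correct, but for parts (1) and (2) it takes a genuinely different route from the paper, while for (3) and (4) it essentially rediscovers the paper's general lemma in the abelian case. The paper computes no characters at all: it introduces the weight-system maps $W^{\even/\odd}$ of Lemma~\ref{lem:w_well_defined}, observes they are embeddings, and identifies the image with the span of configurations of three $n$-th roots of unity on $S^1$ up to rotation and reflection, so that $\dim\calA_\Theta^{\odd}(\C[\pi])$ is read off directly as the number $p_3(n)$ of arc-length partitions; in the even case antisymmetry forces the three arcs to be nonzero and pairwise distinct, and the shift $(x,y,z)\mapsto(x-1,y-2,z-3)$ gives $p_3(n-6)$ with no further work. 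Your Burnside/Molien average in the Fourier basis is a sound replacement (invariants depend only on the image of $(\pi\times\pi)\rtimes\Z_2$ in $GL(\C[\pi])$, namely the affine maps $x\mapsto\pm x+t$, so summing fixed points is legitimate), and the bookkeeping you defer does close: for instance, for $n$ odd your sums give $\frac{(n+1)(n+2)}{12}+\frac{\gcd(3,n)-1}{6}+\frac{n+1}{4}$ in the odd case, which is indeed $p_3(n)$, and the analogous expressions in the other cases match $p_3(n)$ and $p_3(n-6)$ on every residue class mod $6$ --- but this quasi-polynomial matching is exactly the step the paper's bijection renders unnecessary. For (3) and (4), the paper proves Proposition~\ref{prop:A_CG_Ker_e} for an \emph{arbitrary} finite group $G$: using the same splitting $\C[G]=\mathbf{1}\oplus\mathrm{Ker}\,\ve$, it shows the $\tbigwedge^2$ correction term has no invariants and the $\Sym^2$ correction term contributes $(\C\hat{G})_{\Z_2}$. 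Your checks $(\tbigwedge^2 K)^G=0$ and $\dim(\Sym^2 K)^G=\lfloor n/2\rfloor$ are precisely the $\Z_n$-specializations of these two facts (note $\dim(\C\hat{\pi})_{\Z_2}=\lfloor n/2\rfloor+1$ for $\pi=\Z_n$, accounting for your extra trivial summand), and your generating-function identity $p_3(n)-p_3(n-3)=\lfloor n/2\rfloor+1$ replaces the paper's bijection between partitions of $n$ into exactly three positive parts and partitions of $n-3$ into at most three. What each approach buys: the paper's circle-partition argument is shorter and bijective, and its Proposition~\ref{prop:A_CG_Ker_e} is stated in the generality also needed for the $\mathrm{SL}_2(\F_5)$ example; yours is fully mechanical and self-contained (no weight-system lemma required), at the price of the mod-$6$ verification, which is where you rightly note sign and parity errors would most easily creep in.
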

The following is a table of the values of the dimensions of $\calA_\Theta^{\even/\odd}(\C[\pi])$ and $\calA_\Theta^{\even/\odd}(\mathrm{Ker}\,\ve)$ for $n\leq 15$. It is known that $p_3(m)$ ($m\geq 0$) is the nearest integer to $\frac{(m+3)^2}{12}$ (e.g. \cite[Ch. 6]{AK}).
\par\medskip
\begin{center}
\begin{tabular}{|c|ccccccccccccccc|}\hline
$n$ & 1 & 2 & 3 & 4 & 5 & 6 & 7 & 8 & 9 & 10 & 11 & 12 & 13 & 14 & 15\\ \hline
$\dim\calA_\Theta^{\odd}(\C[\pi])$ &
  1 & 2 & 3 & 4 & 5 & 7 & 8 & 10 & 12 & 14 & 16 & 19 & 21 & 24 & 27\\
$\dim\calA_\Theta^{\even}(\C[\pi])$ & 
  0 & 0 & 0 & 0 & 0 & 1 & 1 & 2 & 3 & 4 & 5 & 7 & 8 & 10 & 12\\ 
$\dim\calA_\Theta^{\odd}(\mathrm{Ker}\,\ve)$ &
  0 & 0 & 1 & 1 & 2 & 3 & 4 & 5 & 7 & 8 & 10 & 12 & 14 & 16 & 19\\ 
$\dim\calA_\Theta^{\even}(\mathrm{Ker}\,\ve)$ & 
  0 & 0 & 0 & 0 & 0 & 1 & 1 & 2 & 3 & 4 & 5 & 7 & 8 & 10 & 12\\ \hline
\end{tabular}
\end{center}
\par\medskip

Although the results for $\calA_\Theta^\odd$ are not used in this paper, we think these would be useful to study finite type invariants of 3-manifolds in \cite{GL}.

Let $C(M)=\Diff(M\times I, M\times \{0\}\cup \partial M\times I)$, the group of diffeomorphisms of $M\times I$ pointwise fixing $M\times \{0\}\cup \partial M\times I$, equiped with the $C^\infty$-topology. An element of $C(M)$ is called a {\it pseudo-isotopy} or a {\it concordance} of $M$. Pseudo-isotopy theory (e.g., \cite{Ce,HW,Ig1}) studies the topology of $C(M)$, which is related to the diffeomorphism groups via the following fiber sequence
\begin{equation}\label{eq:C(M)}
 \Diff(M\times I,\partial) \to C(M) \stackrel{r}{\to} \Diff(M), 
\end{equation}
where $r$ is the restriction to $M\times\{1\}$. 
For most 3-manifolds $M$, the left map in this sequence induces a map between $\pi_0$ which is close to an isomorphism, in the sense that $\pi_i\Diff(M)$ is small in many cases (generalized Smale conjecture, e.g., \cite{Hat,Ga,HKMR,BK}). 

\begin{Cor}
We have the following inequalities.
\begin{enumerate}
\item $\dim \pi_0C(\Sigma(2,3,5))\otimes\Q\geq 27$.
\item $\dim \pi_0C(\Sigma(2,3,5)\times I)\otimes \Q\geq 27$.
\item $\dim \pi_0C(L(n,q)\times I)\otimes \Q\geq p_3(n-6)$.
\end{enumerate}
\end{Cor}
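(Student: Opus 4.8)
The plan is to combine the fibration~\eqref{eq:C(M)} with the lower bounds of Theorem~\ref{thm:1} and the two preceding propositions; the only real subtlety is controlling $\pi_1\Diff(M)$ rationally.

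\emph{Part (1).} For $M=\Sigma(2,3,5)$ I would read off from~\eqref{eq:C(M)} the exact sequence
\[ \pi_1\Diff(M)\xrightarrow{\ \partial\ }\pi_0\Diff(M\times I,\partial)\xrightarrow{\ i_*\ }\pi_0 C(M), \]
where $i_*$ is induced by the fibre inclusion. Since $\pi_0\Diff(M\times I,\partial)$ is abelian (the ABK lemma quoted after Theorem~\ref{thm:1}), exactness shows $i_*$ is rationally injective as soon as $\pi_1\Diff(M)\otimes\Q=0$. To get the latter, invoke the generalized Smale conjecture for spherical space forms (\cite{BK}), giving $\Diff(M)\simeq\mathrm{Isom}(M)$, and compute the identity component of the isometry group: writing $M=S^3/\Gamma$ with $\Gamma\cong\mathrm{SL}_2(\F_5)$ (binary icosahedral) acting by left translation and using $SO(4)=(S^3\times S^3)/\{\pm(1,1)\}$, one finds $N_{SO(4)}(\Gamma)=(\Gamma\times S^3)/\{\pm(1,1)\}$ because $\Gamma$ is self-normalizing in $S^3$, whence $\mathrm{Isom}_0(M)\cong SO(3)$ and $\pi_1\Diff(M)\cong\pi_1 SO(3)=\Z_2$. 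Thus $\pi_1\Diff(M)\otimes\Q=0$, so $i_*$ carries the rank-$27$ subgroup of $\pi_0\Diff_0(M\times I,\partial)\subseteq\pi_0\Diff(M\times I,\partial)$ supplied by Theorem~\ref{thm:1} and Proposition~\ref{prop:ex1} onto a free abelian subgroup of $\pi_0 C(M)$ of the same rank, giving~(1).

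\emph{Parts (2) and (3).} These concern concordances of the $4$-manifolds $M\times I$, so applying~\eqref{eq:C(M)} to $N=M\times I$ produces the fibre $\Diff(M\times I\times I,\partial)$, which Theorem~\ref{thm:1} does not bound directly. Instead I would route the classes through the suspension functor $\sigma\colon C(M)\to C(M\times I)$ of stable pseudo-isotopy theory, composing it with the fibre inclusion $\Diff(M\times I,\partial)\hookrightarrow C(M)$; equivalently, one repeats the equivariant triple-intersection construction of $Z_\Theta^\even$ one dimension higher, on $(M\times I)\times I$, to obtain a homomorphism $\pi_0 C(M\times I)\otimes\C\to\calA_\Theta^\even(W)$ that stays surjective for $W=\mathrm{Ker}\,\ve$. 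Either way the target group is unchanged, so the bounds $\dim\calA_\Theta^\even(\mathrm{Ker}\,\ve)$ reappear: $27$ for $M=\Sigma(2,3,5)$ by Proposition~\ref{prop:ex1}, and $p_3(n-6)$ for $M=L(n,q)$ by Proposition~\ref{prop:ex2}. This is exactly where the passage to $M\times I$ is essential for the lens spaces: unlike $\Sigma(2,3,5)$, a lens space carries an $S^1$ (generically a $T^2$) of isometries, so $\pi_1\Diff(L(n,q))\otimes\Q\neq0$ and the Part~(1) argument yields no rational injection into $\pi_0 C(L(n,q))$; suspending once removes this obstruction.

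\emph{Main obstacle.} The genuine content is in Parts (2)--(3): one must verify that $Z_\Theta^\even$ is natural with respect to the suspension $M\rightsquigarrow M\times I$, so that the rank-$27$ (resp.\ $p_3(n-6)$) classes survive a single stabilization and remain rationally independent in $\pi_0 C(M\times I)$ --- that is, that the once-suspended configuration-space integral computes the same invariant and is still surjective onto $\calA_\Theta^\even(\mathrm{Ker}\,\ve)$, and that the image of the connecting homomorphism for the lens-space case does not collapse these classes. The step in Part~(1) requiring care is the identification $\mathrm{Isom}_0(\Sigma(2,3,5))\cong SO(3)$ that feeds the Smale-conjecture input; the remainder is bookkeeping with the exact sequence of~\eqref{eq:C(M)}, and the lower bounds themselves are then immediate from Theorem~\ref{thm:1} and the two propositions.
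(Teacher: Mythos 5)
Your Part (1) is correct and is essentially the paper's own argument: both use \cite{BK} to identify $\Diff(\Sigma(2,3,5))\simeq \mathrm{Isom}(\Sigma(2,3,5))=SO(3)$, and the homotopy exact sequence of (\ref{eq:C(M)}) to see that the kernel of $\pi_0\Diff(\Sigma(2,3,5)\times I,\partial)\to\pi_0C(\Sigma(2,3,5))$ is the image of $\pi_1SO(3)=\Z_2$, hence torsion, so the rank-$27$ subgroup supplied by Theorem~\ref{thm:1} and Proposition~\ref{prop:ex1} survives rationally. (Only injectivity is needed for the bound; the paper also records surjectivity to get a rational isomorphism, but that is extra.)

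Parts (2) and (3), however, have a genuine gap, and it sits exactly where you flagged your ``main obstacle.'' The paper's proof is a citation of the lifting theorem of Botvinnik--Watanabe \cite[Theorem~1.3]{BW}: the theta-graph surgery classes in $\pi_0\Diff_0(M\times I,\partial)=\pi_1B\Diff_0(M\times I,\partial)$ lift to $\pi_1BC(M\times I)=\pi_0C(M\times I)$, i.e.\ they are in the image of the restriction map $\pi_0C(M\times I)\to\pi_0\Diff_0(M\times I,\partial)$; composing that restriction with $Z_\Theta^\even$ then exhibits a free abelian subgroup of $\pi_0C(M\times I)$ of rank $\dim\calA_\Theta^\even(\mathrm{Ker}\,\ve)$, namely $27$ resp.\ $p_3(n-6)$. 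Neither of your proposed substitutes delivers this. Your route (a) --- push the classes into $\pi_0C(M)$ and apply the suspension $\sigma_*\colon\pi_0C(M)\to\pi_0C(M\times I)$ --- requires proving that $\sigma_*$ does not kill them rationally, which you leave open; and this is not a routine naturality check but the crux, because these classes are genuinely unstable: by Theorem~\ref{thm:2}\,(2) they lie (up to finite index) in the kernel of $\pi_0C(M\times I)\to\pi_0P(M\times I)$, and in fact $\pi_0P(M\times I)\otimes\Q=0$ here, since $\pi_2(M\times I)=0$, $\mathit{Wh}_1^+(\pi_1;\Z_2)$ is a sum of copies of $\Z_2$, and $\mathit{Wh}_2(\pi_1)$ is finite for finite $\pi_1$ (\cite{HW,Ig2}). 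So no argument that factors through stable suspension theory can detect them rationally, and even unstably the composite $\delta_*\circ\sigma_*$ is not the identity on $\pi_0C(M)$ (it involves the canonical involution on concordance spaces in Hatcher's spectral sequence), so one cannot recover the classes after suspending without analyzing that involution --- which is the analysis \cite{BW} replaces. Your route (b) --- rebuilding $Z_\Theta^\even$ one dimension higher on $(M\times I)\times I$ so as to get a homomorphism on $\pi_0C(M\times I)$ directly --- is a substantial new construction that exists neither in this paper nor in \cite{Wa20}: the triple-intersection count (three $6$-chains in the $9$-dimensional $E\bConf_2(p)$) is specific to $1$-parameter families of $4$-manifolds and does not transcribe verbatim to $5$-manifolds. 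In short, the missing idea for (2) and (3) is precisely \cite[Theorem~1.3]{BW}.
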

\begin{proof}
For (1), since $\Diff(\Sigma(2,3,5))\simeq \mathrm{Isom}(\Sigma(2,3,5))=SO(3)$ by \cite{BK}, and $\pi_0SO(3)=0$, the natural map $\pi_0\Diff(\Sigma(2,3,5)\times I,\partial)\to \pi_0 C(\Sigma(2,3,5))$ from (\ref{eq:C(M)}) is surjective, and the image is abelian, too. Moreover, its kernel is the image from $\pi_1SO(3)=\Z_2$. Hence we have an isomorphism
$\pi_0\Diff(\Sigma(2,3,5)\times I,\partial)\otimes\Q\cong \pi_0 C(\Sigma(2,3,5))\otimes\Q$. Then the result follows by Theorem~\ref{thm:1} and Proposition~\ref{prop:ex1}.

For (2) and (3), we use the fact that theta-graph surgeries that are detected by $Z_\Theta^\even$ lifts to $\pi_1BC(M\times I)$ (\cite[Theorem~1.3]{BW}). Then the results follow by Theorem~\ref{thm:1} and Propositions~\ref{prop:ex1}, \ref{prop:ex2}.
\end{proof}

We compare our nontrivial subgroup in $\pi_0 C(M\times I)$ with Hatcher--Wagoner's stable pseudo-isotopy theory $P(M\times I)=\mathrm{colim}\,C(M\times I^N)$ (\cite{HW}, see also \cite{Ig1}), where the colimit is taken with respect to the ``suspension functor'' $\sigma\colon C(X)\to C(X\times I)$ (\cite[Ch I-\S{5}]{HW}). In \cite{Ig2}, the following commutative diagram is considered for $\dim{X}=4$:
\[ \xymatrix{
  \pi_0 \calD(X) \ar[r]^-{\widetilde{\theta}} \ar[d]_-{\overline{\lambda}} & \pi_0 C(X) \ar[d] & &\\
  \mathit{Wh}_1^+(\pi_1X;\Z_2\oplus \pi_2X) \ar[r] & \pi_0 P(X) \ar[r] & \mathit{Wh}_2(\pi_1X) \ar[r] & 0
}\]
where the bottom horizontal line is exact (by \cite{HW}) and $\calD(X)$ is the space of ``lens-space models'' for pseudo-isotopies of $X$ (\cite[Definition~1.4]{Ig2}). In recent works of K.~Igusa (\cite{Ig1,Ig2}) and O.~Singh (\cite{Si}), many nontrivial elements of $\pi_0C(X)$ for some 4-manifolds $X$ with $\pi_2X\neq 0$ are found, by realizing elements of $\mathit{Wh}_1^+(\pi_1X;\Z_2\oplus \pi_2X)$ by explicit 1-parameter families in $\calD(X)$ of 2,3-handle pairs. Their nontrivial elements are also nontrivial in $\pi_0 P(X)$. 

We see that our theta-graph surgery behaves differently. 
\begin{Thm}\label{thm:2}
Let $M$ be a spherical 3-manifold.
\begin{enumerate}
\item Theta-graph surgery gives elements of $\pi_0C(M\times I)$ that lift to $\pi_0\calD(M\times I)$.

\item $\pi_0C(M\times I)$ includes a free abelian subgroup of rank $\dim\calA_\Theta^\even(\mathrm{Ker}\,\ve)$ that is included in the kernel of $\pi_0C(M\times I)\to \pi_0 P(M\times I)$.
\end{enumerate}
\end{Thm}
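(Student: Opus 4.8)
The plan is to prove (1) by presenting the theta-graph surgery pseudo-isotopy explicitly as one of Igusa's lens-space models, and then to deduce (2) formally from the commutative diagram of \cite{Ig2} together with the vanishing $\pi_2(M\times I)=0$.

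For (1), I would unwind the geometric construction of the theta-graph surgery family underlying $Z_\Theta^\even$ in \cite{Wa20}. This family is supported in a neighborhood of an embedded theta-graph and is assembled from a controlled, finite collection of elementary handle moves. The goal is to reorganize it, using Cerf theory, as a one-parameter family of $2,3$-handle pairs --- a lens-space model in the sense of \cite[Definition~1.4]{Ig2} --- and to verify that $\widetilde{\theta}$ carries the resulting class in $\pi_0\calD(M\times I)$ to the theta-graph class in $\pi_0 C(M\times I)$ by tracing both through their definitions. I expect this matching, which translates the Borromean/clasper-type surgery description into Igusa's handle-theoretic framework, to be the main obstacle: it is the only genuinely geometric step, while the rest of the argument is formal.

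For (2), recall from Theorem~\ref{thm:1} together with \cite[Theorem~1.3]{BW} that the theta-graph classes span a free abelian subgroup $F\subset\pi_0 C(M\times I)$ of rank $r:=\dim\calA_\Theta^\even(\mathrm{Ker}\,\ve)$. By (1) a basis $e_1,\dots,e_r$ of $F$ admits lifts $\widetilde e_i\in\pi_0\calD(M\times I)$, so commutativity of the diagram of \cite{Ig2} identifies the image of each $e_i$ in $\pi_0 P(M\times I)$ with the image of $\overline{\lambda}(\widetilde e_i)$ under
\[ \mathit{Wh}_1^+\bigl(\pi_1(M\times I);\Z_2\oplus\pi_2(M\times I)\bigr)\longrightarrow \pi_0 P(M\times I). \]
The decisive point is the spherical hypothesis: the universal cover of $M$ is $S^3$, so $\pi_2 M=0$ and hence $\pi_2(M\times I)=0$. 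The coefficient module therefore collapses to $\Z_2$, and $\mathit{Wh}_1^+(\pi_1(M\times I);\Z_2)$ is an $\F_2$-vector space, in particular of exponent $2$.

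Consequently the image of $F$ in $\pi_0 P(M\times I)$ has exponent $2$; being a finitely generated abelian group it is finite, so the kernel of $F\to\pi_0 P(M\times I)$ has finite index in $F$ and is therefore free abelian of rank $r$. This kernel is the asserted free abelian subgroup of rank $r$ contained in $\ker\bigl(\pi_0 C(M\times I)\to\pi_0 P(M\times I)\bigr)$, which proves (2). This is precisely where the behavior diverges from the classes of Igusa and Singh: they realize infinite-order elements of $\mathit{Wh}_1^+$ through the $\pi_2 X$-summand of the coefficients, which survive stabilization, whereas the spherical hypothesis annihilates that summand and leaves only $2$-torsion, which dies in $\pi_0 P(M\times I)$.
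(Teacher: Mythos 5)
Your part (2) is correct and is essentially the paper's own argument: since $\pi_2(M\times I)=\pi_2M=0$, the obstruction group collapses to $\mathit{Wh}_1^+(\pi_1M;\Z_2)$, which by \cite[p.3]{Ig2} equals $\bigoplus_{c-1}\Z_2$ ($c$ the number of conjugacy classes of $\pi_1M$); so the image of the rank-$r$ free abelian subgroup $F$ in $\pi_0P(M\times I)$ is a finitely generated group of exponent $2$, hence finite, and the kernel of $F\to\pi_0P(M\times I)$ is a finite-index subgroup of $F$, hence free abelian of the same rank. This matches the paper, including the remark that the index is $2^{c-1}$; your appeal to ``$\mathit{Wh}_1^+(\pi;\Z_2)$ is an $\F_2$-vector space'' is exactly what the cited computation supplies.

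The genuine gap is in part (1), on which (2) depends. What you offer there is a plan, not a proof: you propose to ``unwind'' the theta-graph surgery family and ``reorganize it, using Cerf theory,'' into a $1$-parameter family of $2,3$-handle pairs, and you yourself flag the translation from the clasper/Borromean surgery description to Igusa's lens-space models (\cite[Definition~1.4]{Ig2}) as ``the main obstacle.'' That translation \emph{is} the content of statement (1); declaring it to be the remaining geometric step does not discharge it, and nothing in your proposal indicates how the Cerf-theoretic reorganization would actually be carried out or why it must succeed. The paper does not redo this work either: it proves (1) by citing \cite[\S{8}]{Wa20}, which (using \cite[Theorem~1.3]{BW}) already exhibits the pseudo-isotopies produced by theta-graph surgery as arising from $1$-parameter families of $2,3$-handle pairs, i.e.\ as elements lifting to $\pi_0\calD(M\times I)$. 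So either you invoke that existing result, as the paper does, or you must actually execute the handle-theoretic matching you postponed; as written, (1) is unproven, and with it the lifting of a basis of $F$ to $\pi_0\calD(M\times I)$ that your proof of (2) takes as input.
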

\begin{proof}
(1) follows from \cite[\S{8}]{Wa20}, which uses a result of \cite{BW}. Then (2) follows since $\pi_2(M\times I)=\pi_2M=0$, $\pi_1(M\times I)=\pi_1M$, and 
\[ \mathit{Wh}_1^+(\pi_1M;\Z_2)=\bigoplus_{c-1}\Z_2, \]
where $c$ is the number of conjugacy classes of elements of $\pi_1M$ (\cite[p.3]{Ig2}).
\end{proof}

\begin{Rem}
The subgroup of Theorem~\ref{thm:2} (2) is of finite index ($=2^{c-1}$) in the free abelian subgroup of rank $\dim\calA_\Theta^\even(\mathrm{Ker}\,\ve)$ generated by theta-graph surgery. This restriction to the smaller subgroup would be unnecessary since Theorem~\ref{thm:2} (2) could also be proved by directly evaluating the homomorphism $\overline{\lambda}\colon \pi_0 \calD(X)\to \mathit{Wh}_1^+(\pi_1X;\Z_2)$ for the 1-parameter family of the attaching 2-spheres of the 3-handles in \cite[\S{8}]{Wa20} obtained from theta-graph surgery. 

There is a similar result for the space $C^{\mathrm{Top}}(M)$ of topological pseudo-isotopies by Kwasik and Schultz \cite[Theorem~1]{KS} giving nontrivial unstable elements of $\pi_0C^{\mathrm{Top}}(M)$ for certain 3-manifolds $M$.
\end{Rem}

Let $\widetilde{\Diff}(M)$ be the space of block diffeomorphisms of $M$ (see e.g., \cite{HLLRW}). In \cite[Propopsition~2.1]{Hat3}, Hatcher constructed a spectral sequence with $E_{pq}^1=\pi_qC(M\times I^p)$ converging to $\pi_{p+q+1}(\widetilde{\Diff}(M)/\Diff(M))$. In particular,
\[ \pi_1(\widetilde{\Diff}(M)/\Diff(M))=E^2_{00}=E_{00}^1/\delta_*(E_{10}^1)=\pi_0C(M)/\delta_*(\pi_0C(M\times I)), \]
where $\delta\colon C(M\times I)\to C(M)$ is defined by $\delta(f)=f|_{M\times I\times\{1\}}$. Similar identity for the topological case was considered in \cite[p.874]{KS}. 
Since the elements of $\pi_0C(M)$ constructed by theta-graph surgery are in the image of $\delta_*$ (\cite[Theorem~1.3]{BW}), we have the following.
\begin{Prop}
Let $M$ be a spherical 3-manifold. 
The elements of $\pi_0C(M)$ constructed by theta-graph surgery are trivial in $\pi_1(\widetilde{\Diff}(M)/\Diff(M))$.
\end{Prop}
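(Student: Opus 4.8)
The plan is to read the conclusion directly off the cokernel description of $\pi_1(\widetilde{\Diff}(M)/\Diff(M))$ recorded above, combined with the compatibility of theta-graph surgery with the restriction map $\delta$.

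First I would invoke the edge identification from Hatcher's spectral sequence,
\[ \pi_1(\widetilde{\Diff}(M)/\Diff(M)) = E^2_{00} = \pi_0C(M)/\delta_*(\pi_0C(M\times I)), \]
so that a class in $\pi_0C(M)$ vanishes in $\pi_1(\widetilde{\Diff}(M)/\Diff(M))$ exactly when it lies in the image of $\delta_*\colon \pi_0C(M\times I)\to \pi_0C(M)$. It is worth stressing that this is \emph{not} triviality in $\pi_0C(M)$ itself — for $M=\Sigma(2,3,5)$ the Corollary exhibits the theta-graph classes as spanning a subgroup of rank at least $27$ — but triviality only after passing to the block quotient.

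Second I would apply \cite[Theorem~1.3]{BW}, which says precisely that each theta-graph surgery class in $\pi_0C(M)$ lifts through $\delta$: it is the restriction to $M\times I\times\{1\}$ of a pseudo-isotopy $f\in C(M\times I)$ assembled from the same theta-graph data one dimension up, equivalently a class in $\pi_0C(M\times I)$. This is the same lift to $\pi_1BC(M\times I)=\pi_0C(M\times I)$ used in the proof of the Corollary. Hence the theta-graph classes lie in $\delta_*(\pi_0C(M\times I))$, the subgroup we are quotienting by, and therefore vanish in $\pi_1(\widetilde{\Diff}(M)/\Diff(M))$.

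The argument is formal once both inputs are available, so the genuine obstacle sits inside \cite[Theorem~1.3]{BW}: one must verify that the surgery family extends to one higher dimension so that $\delta$ recovers the original family on the nose, since any residual term falling outside $\delta_*(\pi_0C(M\times I))$ would persist in the cokernel. I would accordingly focus attention on checking that restriction to the top face $M\times I\times\{1\}$ returns the original pseudo-isotopy class exactly, rather than up to a stabilization error.
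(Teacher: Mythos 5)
Your proposal is correct and follows exactly the paper's argument: identify $\pi_1(\widetilde{\Diff}(M)/\Diff(M))$ with the cokernel $\pi_0C(M)/\delta_*(\pi_0C(M\times I))$ via Hatcher's spectral sequence, then use \cite[Theorem~1.3]{BW} to place the theta-graph classes in the image of $\delta_*$. Your closing caveat about checking that $\delta$ recovers the original class on the nose is a reasonable point of care, but the paper treats this as exactly what \cite[Theorem~1.3]{BW} provides, so no additional argument is needed.
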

\sloppy
This shows that theta-graph surgery is not like the unstable elements of $\pi_0C^{\mathrm{Top}}(M)$ detected in \cite{KS}, where the nontrivial elements are detected in $\pi_1(\widetilde{\mathrm{Top}}(M)/\mathrm{Top}(M))$.

\subsection*{\bf Acknowledgments}

We thank Y.~Nozaki, T.~Sakasai for information about software. We thank E.~Matsuhashi for his support. 
This work was partially supported by JSPS Grant-in-Aid for Scientific Research 20K03594 and 21K03225, and by RIMS, Kyoto University.

\fussy
\mysection{Twisted homology of configuration space}{s:homol}

In this and the next section, we check Theorem~\ref{thm:1}.
We use a slightly generalized version of the invariant $Z_\Theta^\even$ of \cite{Wa20} for more general local coefficient system $W$ (Lemma~\ref{lem:ex-assum}). To define $Z_\Theta^\even$, we need ``propagators'' in a family of configuration spaces of two points on $X=M\times I$. Now we check that a propagator exists for more general coefficients $W$.

\subsection{Acyclic complex}

Suppose that $M$ is a spherical 3-manifold. 
Let $\pi=\pi_1M$, and let $A$ be a non-trivial irreducible $\C\pi$-module. 
The homology of $M$ with twisted coefficient $A$ is defined by
\[ H_*(M;A):=H(S_*(\widetilde{M})\otimes_{\C\pi}A), \]
where $\widetilde{M}$ is the universal cover of $M$, $S_*(\cdot)$ is the $\C$-complex of singular chains.

\begin{Lem}\label{lem:M-acyclic}
$H_*(M;A)=0$. 
\end{Lem}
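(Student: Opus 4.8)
The plan is to exploit the semisimplicity of $\C\pi$. Since $M=S^3/\Gamma$ is spherical, $\pi=\pi_1 M=\Gamma$ is a \emph{finite} group acting freely on $S^3$, so the universal cover is $\widetilde{M}=S^3$ and, by Maschke's theorem, $\C\pi$ is semisimple. In particular the irreducible module $A$ is projective, hence flat, so the coefficient functor $-\otimes_{\C\pi}A$ is exact. This is the whole point: exactness will let me move the homology past the tensor product and reduce the twisted computation to a trivial representation-theoretic observation.

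Concretely, I would first recall that an exact functor commutes with the formation of homology, since it preserves both kernels and images. Applying this to the complex $C_*=S_*(\widetilde{M})$ of $\C\pi$-modules gives
\[
  H_*(M;A)=H_*\bigl(S_*(\widetilde{M})\otimes_{\C\pi}A\bigr)\cong H_*(\widetilde{M};\C)\otimes_{\C\pi}A=H_*(S^3;\C)\otimes_{\C\pi}A,
\]
where $H_*(S^3;\C)$ carries its natural $\pi$-action. I would then identify that action. The only nonzero groups are $H_0(S^3;\C)$ and $H_3(S^3;\C)$, each isomorphic to $\C$; the action on $H_0$ is trivial because $S^3$ is connected, and the action on $H_3$ is trivial because $\Gamma\subset SO(4)$ acts on $S^3$ preserving orientation. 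Thus both are the trivial $\C\pi$-module.

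It then remains to compute the coinvariants $\C\otimes_{\C\pi}A=A_\pi$ of the trivial module against $A$. Over a finite group in characteristic zero, coinvariants agree with invariants $A^\pi$ by the averaging idempotent, and $A^\pi=0$ because any nonzero invariant vector would span a trivial subrepresentation, contradicting that $A$ is irreducible and non-trivial. Hence $H_0(M;A)=H_3(M;A)=0$, and all remaining degrees vanish for dimension reasons, giving $H_*(M;A)=0$. I expect no genuine obstacle here; the only points needing (minor) care are the justification that homology commutes with the exact coefficient functor and the verification that the orientation action on $H_3(S^3)$ is trivial. The conceptual content is simply that semisimplicity turns a potentially delicate twisted-homology calculation into the statement that a non-trivial irreducible representation has no invariants.
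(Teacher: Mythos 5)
Your proof is correct, but it takes a genuinely different route from the paper's. You exploit semisimplicity on the homology side: $A$ is projective, hence flat, so $-\otimes_{\C\pi}A$ is exact and commutes with taking homology, reducing everything to $H_*(S^3;\C)\otimes_{\C\pi}A$, i.e.\ to the coinvariants $A_\pi$ of the trivial module, which vanish since $A_\pi\cong A^\pi=0$ for a non-trivial irreducible $A$. The paper instead works on the cohomology side: it applies the universal coefficient theorem over the hereditary ring $\C\pi$ to the complex $S_*(S^3;\C)$, computes $H^i(M;A)$ in each degree (using $\mathrm{Ext}^1_{\C\pi}(\C,A)=H^1(\pi;A)=0$ and $\mathrm{Hom}_{\C\pi}(\C,A)=A^\pi=0$), and then invokes Poincar\'{e} duality to transfer the vanishing of cohomology to homology. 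Both arguments hinge on the same two inputs --- Maschke's theorem and $A^\pi=0$ --- and both implicitly or explicitly need that $\Gamma\subset SO(4)$ acts trivially on $H_3(S^3)$ (you make this explicit; the paper uses it silently when identifying $H_3$ of the chain complex with the trivial module $\C$). What your approach buys is economy: no universal coefficient sequence and no appeal to twisted Poincar\'{e} duality, just exactness of the coefficient functor, so the argument is self-contained at the chain level. What the paper's approach buys is that the cohomological quantities it computes ($H^1(\pi;A)$, $A^\pi$) are standard group-cohomology facts that can be cited directly, and the duality step is routine for a closed oriented 3-manifold. Either proof is acceptable; yours is arguably the cleaner one.
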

\begin{proof}
Since $\pi$ is finite, $\C\pi$ is semisimple in the sense of \cite[\S{I.4}]{CE} by Maschke's theorem and we have $H^1(\pi;A)=0$ (Theorem~VI.16.6 and Lemma~VI.16.7 of \cite{HS}). By the universal coefficient theorem, which is valid if the ring is hereditary (e.g., $\C\pi$ for $\pi$ finite), the sequence
\[ 0\to \mathrm{Ext}_{\C\pi}^1(H_{i-1}(C),A)\to H^i(\mathrm{Hom}_{\C\pi}(C,A))\to \mathrm{Hom}_{\C\pi}(H_i(C),A)\to 0 \]
is exact for $C=S_*(S^3;\C)$ (as a $\C\pi$-module) and any $\C\pi$-module $A$ (e.g., \cite[Theorem~VI.3.3]{CE}). Hence we have
\[ \begin{split}
&H^3(M;A)\cong \mathrm{Hom}_{\C\pi}(\C,A)\cong H^0(\pi;A)=A^{\pi},\\
&H^2(M;A)=0,\\
&H^1(M;A)\cong\mathrm{Ext}_{\C\pi}^1(\C,A)=H^1(\pi;A)=0,\\
&H^0(M;A)\cong\mathrm{Hom}_{\C\pi}(\C,A)\cong H^0(\pi;A)=A^{\pi}.
\end{split} \]
Since $A$ is a non-trivial irreducible $\C\pi$-module, we have $A^\pi=0$. 
Then by Poincar\'{e} duality (e.g., \cite[\S{3.H}]{Hat2}, \cite[Theorem~7.17]{Hatt} etc.), we also have $H_*(M;A)=0$. 
\end{proof}

\subsection{Propagator for spherical 3-manifolds $M$}

Let $\Delta_M$ be the diagonal of $M\times M$. The configuration space of two points of $M$ is
\[ \Conf_2(M)=M\times M-\Delta_M. \]
The Fulton--MacPherson compactification of $\Conf_2(M)$ is
\[ \bConf_2(M)=B\ell_{\Delta_M}(M\times M). \]
We idenfity the boundary $\partial\bConf_2(M)$, which is the unit normal sphere bundle of $\Delta_M\subset M\times M$, with $ST(M)$, the unit tangent sphere bundle.
We make the following assumption.
\begin{Assum}\label{assum:2-acyclic}
A $\pi\times \pi$-module $W$ satisfies the following conditions:
\begin{enumerate}
\item $H_*(M\times M;W)=0$.
\item There are elements $e_W^1,\ldots,e_W^r\in W$ on which $\pi\times \pi$ acts trivially such that $H_*(\Delta_M;W)\cong H_*(M;\C)^{\oplus r}$, which is generated over $\C$ by $[*\otimes e_W^i]$ and $[M\otimes e_W^i]$ ($i=1,\ldots,r$). 
\end{enumerate}
\end{Assum}
For a non-trivial irreducible $\pi$-module $A$, let $A\boxtimes A^*$ denote the pullback of the local coefficient system $A\boxtimes_\C A^*$ on $M\times M$ to $\bConf_2(M)$. We denote by $A\otimes A^*$ the restriction of $A\boxtimes A^*$ to $\partial\bConf_2(M)$, on which $\pi$ acts diagonally. 
\begin{Lem}\label{lem:ex-assum}
The following $\pi\times\pi$-modules $W$ satisfy Assumption~\ref{assum:2-acyclic}.
\begin{enumerate}
\item $W=A\boxtimes A^*$.
\item $W=\bigoplus_i (A_i\boxtimes A_i^*)$, where $A_i$ is a non-trivial irreducible $\pi$-module.
\item $W=\mathrm{Ker}\,\ve$, where $\ve\colon \C\pi\to \C$ is the augmentation map.
\end{enumerate}
\end{Lem}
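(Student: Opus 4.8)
The plan is to prove part~(1) directly and then to deduce parts~(2) and~(3) from it. Both conditions of Assumption~\ref{assum:2-acyclic} are additive in $W$: homology with a direct sum of local systems is the direct sum of the homologies, so if $W=W_1\oplus W_2$ with each $W_j$ satisfying the assumption (with counts $r_j$ and generators $e^i_{W_j}$), then $W$ satisfies it with $r=r_1+r_2$ and the union of the generators. Thus part~(2) follows from part~(1) applied to each summand $A_i\boxtimes A_i^*$. For part~(3) I would recall the Wedderburn decomposition $\C\pi\cong\bigoplus_i A_i\boxtimes A_i^*$ as $(\pi\times\pi)$-modules, where $A_i$ runs over the irreducible $\C\pi$-modules and $\pi\times\pi$ acts by left and right multiplication; under this decomposition the augmentation $\ve$ is the projection onto the summand $A_0\boxtimes A_0^*=\C$ of the trivial module $A_0=\C$, whence $\mathrm{Ker}\,\ve\cong\bigoplus_{i\neq 0}A_i\boxtimes A_i^*$ is the sum over the non-trivial irreducibles and is covered by part~(2).

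It remains to treat $W=A\boxtimes A^*$. For condition~(1) I would pass to universal covers. Since $\widetilde{M\times M}=S^3\times S^3$ carries a free $\pi\times\pi$-action with $\C[\pi\times\pi]=\C\pi\otimes_\C\C\pi$, and $A\boxtimes A^*=A\otimes_\C A^*$ as a $\C[\pi\times\pi]$-module, the equivariant Eilenberg--Zilber equivalence induces a $\C$-chain equivalence
\[ C_*(S^3\times S^3)\otimes_{\C[\pi\times\pi]}(A\otimes A^*)\;\simeq\;\bigl(C_*(S^3)\otimes_{\C\pi}A\bigr)\otimes_\C\bigl(C_*(S^3)\otimes_{\C\pi}A^*\bigr). \]
Taking homology and applying the Künneth theorem over the field $\C$ gives $H_*(M\times M;A\boxtimes A^*)\cong H_*(M;A)\otimes_\C H_*(M;A^*)$, and $H_*(M;A)=0$ by Lemma~\ref{lem:M-acyclic} since $A$ is non-trivial irreducible; hence condition~(1) holds.

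For condition~(2) I would first record the twisted homology of $M$ for an arbitrary finite-dimensional $\C\pi$-module $V$. As $\pi$ is finite, $\C\pi$ is semisimple, so $-\otimes_{\C\pi}V$ is exact and commutes with homology; since $H_*(S^3)=\C$ is concentrated in degrees $0$ and $3$ with trivial $\pi$-action, this yields
\[ H_0(M;V)\cong H_3(M;V)\cong V_\pi,\qquad H_1(M;V)=H_2(M;V)=0, \]
with $\dim V_\pi=\dim V^\pi$. Restricting $A\boxtimes A^*$ to the diagonal $\Delta_M\cong M$ gives $\End(A)$ with the conjugation action, whose invariants are $\End(A)^\pi=\End_\pi(A)=\C$ by Schur's lemma. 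Hence $H_*(\Delta_M;A\boxtimes A^*)\cong H_*(M;\C)$, i.e.\ $r=1$, with generators $[*\otimes\mathrm{id}_A]$ and $[M\otimes\mathrm{id}_A]$; for the direct sum and for $\mathrm{Ker}\,\ve$ the generators are the identities $\mathrm{id}_{A_i}$ on the summands (equivalently the primitive central idempotents of $\C\pi$ attached to the non-trivial irreducibles), so $r$ equals the number of non-trivial irreducibles, namely $r=c-1$ for $\mathrm{Ker}\,\ve$.

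The step I expect to be the main obstacle is the invariance of the generators $e_W^i$ and their precise role in condition~(2): on $\Delta_M$ the fibre $W$ is acted on only through the diagonal copy of $\pi$, and it is exactly the diagonal-$\pi$-invariance of $\mathrm{id}_{A_i}$ (conjugation fixes the identity) that makes $[M\otimes\mathrm{id}_{A_i}]$ a cycle and $[*\otimes\mathrm{id}_{A_i}]$ the matching degree-$0$ class. I would verify that these $r$ classes form a basis realizing the isomorphism $H_*(\Delta_M;W)\cong H_*(M;\C)^{\oplus r}$ above; by contrast, the Künneth identification in condition~(1) is routine over $\C$.
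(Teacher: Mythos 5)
Your proposal is correct and takes essentially the same route as the paper: condition (1) of Assumption~\ref{assum:2-acyclic} from Lemma~\ref{lem:M-acyclic} plus the K\"{u}nneth formula, part (2) by additivity of twisted homology over direct sums of coefficients, and part (3) from the decomposition $\mathrm{Ker}\,\ve\cong\bigoplus_{i\neq [1]}(A_i\boxtimes A_i^*)$ together with (2). The only real difference is that you spell out what the paper's one-line proof leaves implicit, namely the Schur's-lemma computation $H_*(\Delta_M;A\otimes A^*)\cong H_*(M;\C)$ with generators $[\ast\otimes \mathrm{id}_A]$ and $[M\otimes \mathrm{id}_A]$, and you correctly note that these elements are invariant only under the diagonal copy of $\pi$, which is the reading of Assumption~\ref{assum:2-acyclic}(2) under which the lemma can hold at all, since $(A\boxtimes A^*)^{\pi\times\pi}=0$ for $A$ non-trivial irreducible.
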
 
\begin{proof}
That (1) satisfies Assumption~\ref{assum:2-acyclic} follows from Lemma~\ref{lem:M-acyclic} and the K\"{u}nneth formula for $\C\pi$-modules. The case (2) follows from (1) and
\[ \begin{split}
  &H_*(M\times M;\textstyle\bigoplus_i (A_i\boxtimes A_i^*))=\bigoplus_i H_*(M\times M;A_i\boxtimes A_i^*),\\
  &H_*(\Delta_M;\textstyle\bigoplus_i (A_i\otimes A_i^*))=\bigoplus_i H_*(\Delta_M;A_i\otimes A_i^*).
\end{split} \]
The case (3) follows from the $\pi\times\pi$-module decomposition
\begin{equation}\label{eq:cpi}
 \C\pi\cong\bigoplus_{i}\mathrm{End}(A_i),\quad
 \mathrm{Ker}\,\ve\cong \bigoplus_{i\neq [1]}\mathrm{End}(A_i)
 \cong \bigoplus_{i\neq [1]}(A_i\boxtimes A_i^*),
\end{equation}
(e.g., \cite[Proposition~3.29]{FH}) where $i$ is taken over the conjugacy classes in $\pi$, and from (2).
\end{proof}

\begin{Lem}\label{lem:H(Conf)}
Let $W$ be a $\pi\times\pi$-module satisfying Assumption~\ref{assum:2-acyclic}. Then we have
\[ H_k(\bConf_2(M);W)\cong\left\{\begin{array}{ll}
\C\{[ST(*)\otimes e_W^i]\mid i=1,\ldots,r\} & (k=2),\\
\C\{[ST(M)\otimes e_W^i]\mid i=1,\ldots,r\} & (k=5),\\
0 & (\mbox{otherwise}),
\end{array}\right. \]
where for an oriented submanifold $\sigma$ of $M$, we denote by $ST(\sigma)$ the restriction of the unit tangent 2-sphere bundle $ST(M)$ to $\sigma$.
\end{Lem}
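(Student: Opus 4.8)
The plan is to reduce everything to Assumption~\ref{assum:2-acyclic} by means of the Thom isomorphism for the normal bundle of the diagonal. First I would replace $\bConf_2(M)$ by the open configuration space: since $\bConf_2(M)=B\ell_{\Delta_M}(M\times M)$ is a compact manifold with boundary $ST(M)$ whose interior is $\Conf_2(M)$, the inclusion $\Conf_2(M)\hookrightarrow\bConf_2(M)$ is a homotopy equivalence, and $W$ on $\bConf_2(M)$ is pulled back from $\Conf_2(M)$ (blowing up a codimension-$3$ submanifold does not change $\pi_1$). So it suffices to compute $H_*(\Conf_2(M);W)$.

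Next I would feed this into the long exact sequence of the pair $(M\times M,\Conf_2(M))$ with coefficients in $W$,
\[ \cdots\to H_k(M\times M;W)\to H_k(M\times M,\Conf_2(M);W)\to H_{k-1}(\Conf_2(M);W)\to\cdots. \]
By Assumption~\ref{assum:2-acyclic}(1) we have $H_*(M\times M;W)=0$, so the connecting maps yield isomorphisms $H_k(M\times M,\Conf_2(M);W)\cong H_{k-1}(\Conf_2(M);W)$ for every $k$. To evaluate the left-hand side I would excise the complement of a tubular neighborhood $N$ of $\Delta_M$, identifying it with $H_*(N,\partial N;W)$. The normal bundle of $\Delta_M$ in $M\times M$ is canonically the tangent bundle $TM$ (via the anti-diagonal), which is oriented since $M$ is, and its disk/sphere bundle pair is exactly $(N,\partial N)$ with $\partial N=ST(M)$. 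The Thom isomorphism for this oriented rank-$3$ bundle then gives
\[ H_k(M\times M,\Conf_2(M);W)\cong H_{k-3}(\Delta_M;W), \]
where on the right $W$ is restricted to $\Delta_M$ through the diagonal $\pi\hookrightarrow\pi\times\pi$.

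Finally I would invoke Assumption~\ref{assum:2-acyclic}(2): $H_*(\Delta_M;W)\cong H_*(M;\C)^{\oplus r}$, which equals $\C^r$ in degrees $0$ and $3$ and vanishes otherwise, since the spherical manifold $M$ is a rational homology sphere, with classes $[*\otimes e_W^i]$ and $[M\otimes e_W^i]$. Chaining the three displays produces $H_2(\Conf_2(M);W)\cong\C^r$ (shifted from $H_0(\Delta_M)$) and $H_5(\Conf_2(M);W)\cong\C^r$ (shifted from $H_3(\Delta_M)$), with all other groups zero, matching the asserted ranks. The remaining task is to match the generators: the connecting homomorphism sends the Thom class over a point $*$ to its bounding normal fiber sphere, yielding $[ST(*)\otimes e_W^i]$ in degree $2$, and the Thom class over the fundamental class $[M]$ to the whole sphere bundle, yielding $[ST(M)\otimes e_W^i]$ in degree $5$.

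The part I expect to be most delicate is the twisted Thom isomorphism together with the bookkeeping of the generators: one must check that the pulled-back local system $W$ restricts on $\Delta_M$ to the diagonal $\pi$-module and that the invariant vectors $e_W^i$ (on which $\pi\times\pi$ acts trivially) survive both the Thom isomorphism and the connecting map to give precisely the stated classes, with the correct orientation on the fiber $2$-sphere $ST(*)$. Everything else is a formal consequence of the vanishing in Assumption~\ref{assum:2-acyclic}(1) and the homology of a rational homology $3$-sphere.
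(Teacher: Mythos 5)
Your proof is correct and follows essentially the same route as the paper: the long exact sequence of the pair $(M\times M,\Conf_2(M))$ with the vanishing from Assumption~\ref{assum:2-acyclic}(1), excision to the tubular neighborhood pair of $\Delta_M$, a degree-3 shift down to $H_*(\Delta_M;W)$, and then Assumption~\ref{assum:2-acyclic}(2) with the same identification of generators $[ST(*)\otimes e_W^i]$ and $[ST(M)\otimes e_W^i]$. The only (immaterial) difference is that you justify the degree shift by the twisted Thom isomorphism for the oriented normal bundle, whereas the paper uses parallelizability of $M$ to trivialize the normal bundle and applies the K\"{u}nneth formula to $(D^3,\partial D^3)\times\Delta_M$.
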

\begin{proof}
We consider the exact sequence
\[
 H_{i+1}(M^{\times 2};W)\to H_{i+1}(M^{\times 2},\Conf_2(M);W)\to H_i(\Conf_2(M);W)\to H_i(M^{\times 2};W),
\]
where $H_*(M^{\times 2};W)=0$ by Assumption~\ref{assum:2-acyclic} (1). 
Letting $N(\Delta_M)$ be a closed tubular neighborhood of $\Delta_M$, we have
\[ H_{i+1}(M^{\times 2},\Conf_2(M);W)\cong H_{i+1}(N(\Delta_M),\partial N(\Delta_M);W) \]
by excision. Since $M$ is parallelizable and the normal bundle of $\Delta_M$ can be canonically identified with $TM$, the normal bundle of $\Delta_M$ is trivial. By Assumption~\ref{assum:2-acyclic} (2), we have
\[\begin{split}
& H_{i+1}(N(\Delta_M),\partial N(\Delta_M);W)
=H_3(D^3,\partial D^3;\C)\otimes_\C H_{i-2}(\Delta_M;W)\\
&\cong H_3(D^3,\partial D^3;\C)\otimes_\C H_{i-2}(M;\C)^{\oplus r}\cong H_{i-2}(M;\C)^{\oplus r}.
\end{split} \]
Here, $H_{i-2}(M;\C)$ is rank 1 for $i-2=0,3$, and its generator is $*,M$, respectively.
\end{proof}

\begin{Lem}
Let $W$ be a $\pi\times\pi$-module satisfying Assumption~\ref{assum:2-acyclic}. Let $s_{\tau_0}\colon M\to ST(M)$ be the section given by the normalization of the first vector of a framing $\tau_0$ of $M$. Then we have
\[ \begin{split}
	H_3(\partial \bConf_2(M);W)=\C\{[s_{\tau_0}(M)\otimes e_W^i]\mid i=1,\ldots,r\}.
\end{split} \]
\end{Lem}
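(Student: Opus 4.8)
The plan is to exploit the product structure of the boundary. Since $\partial\bConf_2(M)=ST(M)$ is the unit tangent sphere bundle and $M$ is a spherical $3$-manifold, $M$ is parallelizable; the framing $\tau_0$ furnishes a trivialization $TM\cong M\times\R^3$ and hence an identification $ST(M)\cong M\times S^2$ under which the section $s_{\tau_0}$ becomes the constant section $m\mapsto (m,v_0)$, where $v_0\in S^2$ is the unit vector corresponding to the first vector of $\tau_0$. In particular $s_{\tau_0}(M)=M\times\{v_0\}$.

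Under this identification the local system $W$ on $ST(M)$ is pulled back from $\Delta_M\cong M$ (with the diagonal $\pi$-action described before Assumption~\ref{assum:2-acyclic}), the $S^2$-factor carrying the trivial action since $S^2$ is simply connected. First I would therefore apply the K\"{u}nneth formula for $\C\pi$-modules (the same tool used in Lemma~\ref{lem:ex-assum}) to obtain
\[ H_k(ST(M);W)\cong\bigoplus_{p+q=k} H_p(\Delta_M;W)\otimes_\C H_q(S^2;\C). \]

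Next I would feed in Assumption~\ref{assum:2-acyclic} (2), which gives $H_*(\Delta_M;W)\cong H_*(M;\C)^{\oplus r}$ concentrated in degrees $0$ and $3$, with $H_0$ generated by the classes $[*\otimes e_W^i]$ and $H_3$ by the classes $[M\otimes e_W^i]$, together with $H_*(S^2;\C)$ concentrated in degrees $0$ and $2$. In the target degree $k=3$ the only surviving summand is $p=3,\,q=0$: the contribution $p=0,\,q=3$ vanishes because $H_3(S^2;\C)=0$, and the summands with $p=1,2$ vanish since $H_1(\Delta_M;W)=H_2(\Delta_M;W)=0$. Hence $H_3(ST(M);W)\cong H_3(\Delta_M;W)\otimes_\C H_0(S^2;\C)$ is free of rank $r$, and the class $[M\otimes e_W^i]\otimes[v_0]$ corresponds precisely to $[s_{\tau_0}(M)\otimes e_W^i]$ under the above identification, giving the asserted basis.

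The computation is routine once the setup is in place; the point demanding care is the bookkeeping of generators---confirming that $s_{\tau_0}(M)$ is the constant section $M\times\{v_0\}$ and therefore represents the $H_3(M)\otimes H_0(S^2)$ class rather than any class involving the fundamental class $[S^2]$. A secondary check is that the restriction of $W$ to the boundary indeed carries the diagonal action with trivial $S^2$-dependence, so that K\"{u}nneth applies with untwisted $S^2$-homology; this is exactly the description of the restricted coefficient system recorded just before Assumption~\ref{assum:2-acyclic}.
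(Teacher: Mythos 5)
Your proof is correct and follows essentially the same route as the paper, which deduces the lemma in one line from the trivialization $\partial\bConf_2(M)\cong S^2\times M$ induced by $\tau_0$, Assumption~\ref{assum:2-acyclic}~(2), and the K\"{u}nneth formula. Your write-up simply fills in the degree bookkeeping and the identification of $s_{\tau_0}(M)$ with the constant section, exactly the details the paper leaves implicit.
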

\begin{proof}
This follows from the trivialization $\partial \bConf_2(M)\cong S^2\times M$ induced by $\tau_0$, Assumption~\ref{assum:2-acyclic} (2), and the K\"{u}nneth formula for $\C$-modules.
\end{proof}

\begin{Lem}[Propagator]\label{lem:propagator} Let $W$ be a $\pi\times\pi$-module satisfying Assumption~\ref{assum:2-acyclic}. 
\begin{enumerate}
\item There exists a 4-chain $\omega^i$ of $\bConf_2(M)$ with coefficients in $W$ that is transversal to the boundary and that satisfies
\[ \partial_W \omega^i=s_{\tau_0}(M)\otimes e_W^i. \]

\item For a fixed framing $\tau_0$ and $i$, the extension $\omega^i$ is unique in the sense that for two such extensions $\omega^i$ and $\omega'{}^i$ that agree near the boundary, there is a 5-chain $\eta$ of $\mathrm{Int}\,\bConf_2(M)$ with coefficients in $W$ such that
\[ \omega'{}^i - \omega^i = \partial_W \eta. \]
\end{enumerate}
We call the direct sum of such extensions $\omega=\sum_i\omega^i$ a \emph{propagator} for $\tau_0$.
\end{Lem}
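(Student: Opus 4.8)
The plan is to deduce both parts from the homology of $\bConf_2(M)$ computed in Lemma~\ref{lem:H(Conf)}, using the long exact sequence of the pair $(\bConf_2(M),\partial\bConf_2(M))$ with local coefficients $W$. The point is that the geometric input has already been packaged into Lemma~\ref{lem:H(Conf)} and the preceding lemma, so what remains is essentially homological algebra together with one transversality argument.

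For (1), recall from the preceding lemma that $s_{\tau_0}(M)\otimes e_W^i$ is a $3$-cycle on $\partial\bConf_2(M)$ representing a generator of $H_3(\partial\bConf_2(M);W)$. In the segment
\[ H_4(\bConf_2(M);W)\to H_4(\bConf_2(M),\partial\bConf_2(M);W)\xrightarrow{\;\partial\;} H_3(\partial\bConf_2(M);W)\to H_3(\bConf_2(M);W) \]
the two outer groups vanish by Lemma~\ref{lem:H(Conf)}, so the connecting homomorphism $\partial$ is an isomorphism; in particular it is surjective. Hence the class $[s_{\tau_0}(M)\otimes e_W^i]$ is the image of a relative class, which I represent by a $4$-chain $\omega^i$ of $\bConf_2(M)$, made transversal to $\partial\bConf_2(M)$ by general position. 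By construction $\partial_W\omega^i$ and $s_{\tau_0}(M)\otimes e_W^i$ are homologous in $\partial\bConf_2(M)$, say $\partial_W\omega^i-s_{\tau_0}(M)\otimes e_W^i=\partial_W\zeta$ for a $4$-chain $\zeta$ supported in $\partial\bConf_2(M)$; replacing $\omega^i$ by $\omega^i-\zeta$ then yields $\partial_W\omega^i=s_{\tau_0}(M)\otimes e_W^i$ exactly, which is the desired propagator.

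For (2), let $\omega^i$ and $\omega'{}^i$ be two such extensions agreeing near the boundary. Then $\partial_W(\omega'{}^i-\omega^i)=0$, and since the two chains coincide near $\partial\bConf_2(M)$ their difference is supported in the interior, so it is an absolute $4$-cycle of $\mathrm{Int}\,\bConf_2(M)$ with coefficients in $W$. Because the inclusion $\mathrm{Int}\,\bConf_2(M)\hookrightarrow\bConf_2(M)$ is a homotopy equivalence, Lemma~\ref{lem:H(Conf)} gives $H_4(\mathrm{Int}\,\bConf_2(M);W)\cong H_4(\bConf_2(M);W)=0$. Hence the cycle $\omega'{}^i-\omega^i$ bounds a $5$-chain $\eta$ of $\mathrm{Int}\,\bConf_2(M)$, i.e. $\omega'{}^i-\omega^i=\partial_W\eta$, as required.

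The only genuinely geometric step, and the one to be most careful about, is the transversality clause in (1): I would work throughout with (piecewise-)smooth chains and invoke a standard general-position perturbation to arrange that $\omega^i$ meets $\partial\bConf_2(M)$ transversally, checking that the subsequent boundary adjustment by $\zeta$ can be performed without destroying transversality. Everything else is formal and rests entirely on the vanishing of $H_3(\bConf_2(M);W)$ and $H_4(\bConf_2(M);W)$ from Lemma~\ref{lem:H(Conf)}.
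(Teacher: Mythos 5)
Your proof is correct and takes essentially the same route as the paper's: both rest on the long exact sequence of the pair $(\bConf_2(M),\partial\bConf_2(M))$ together with the vanishing $H_4(\bConf_2(M);W)=H_3(\bConf_2(M);W)=0$ from Lemma~\ref{lem:H(Conf)}. Your chain-level correction by $\zeta$ in (1) and the collar/interior argument in (2) merely make explicit what the paper leaves implicit.
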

\begin{proof}
The assertion (1) follows immediately from the long exact sequence
\[ 
  H_4(\bC;W)\to H_4(\bC,\partial\bC;W)\to H_3(\partial\bC;W)\to H_3(\bC;W),
\]
where we abbreviate as $\bC=\bConf_2(M)$, and $H_4(\bC;W)=H_3(\bC;W)=0$ by Lemma~\ref{lem:H(Conf)}. Here, both $[\omega^i]$ and $[s_{\tau_0}(M)\otimes e_W^i]$ restrict to the same generator of the homology of $*\times S^2\subset SN\Delta_{M}$, their homology classes agree. The assertion (2) follows since the difference $\omega'{}^i-\omega^i$ vanishes near $\partial\bC$ and represents 0 of the twisted homology $H_4(\bC;W)$.
\end{proof}

\subsection{Twisted homologies of the configuration space of $M\times I$ and its family}

Let $\bConf_2(M\times I)=B\ell_{\Delta_{M\times I}}((M\times I)\times (M\times I))$, which is not a smooth manifold with corners, but satisfies the Poincar\'{e}--Lefschetz duality (\cite[C.3]{Wa21}).
For an $(M\times I)$-bundle $p:E\to S^1$ over $S^1$ with structure group $\Diff(M\times I,\partial)$, we denote by
\[ \bConf_2(p)\colon E\bConf_2(p)\to S^1 \]
the associated $\bConf_2(M\times I)$-bundle with structure group $\Diff(M\times I,\partial)$. 

To define the invariant $Z_\Theta^\even$ as in \cite[Proposition~7.1 and Remark~7.2]{Wa20}, we need a {\it propagator in family}, which is a 6-chain of $E\bConf_2(p)$ with coefficients in $W$ satisfying some boundary condition similar to Lemma~\ref{lem:propagator}, and is implicitly defined in the proof of \cite[Proposition~7.1]{Wa20}. In \cite{Wa20}, the existence of such a 6-chain was guaranteed by the lemmas \cite[Lemmas~7.3 and 7.4]{Wa20}. The analogues of the lemmas for the $W$ in this paper are the following, whose proofs are the same except that the invariant element $c_A$ (\cite[Assumption~3.6]{Wa20}) is replaced with $\sum_i e_W^i$.

\begin{Lem}[{\cite[Lemma~7.3]{Wa20}}]\label{lem:H(Conf(MxI)}
Let $W$ be a $\pi\times\pi$-module satisfying Assumption~\ref{assum:2-acyclic}. Then we have
\[ H_k(\bConf_2(M\times I);W)\cong\left\{\begin{array}{ll}
\C\{[ST(*)\otimes e_W^i]\mid i=1,\ldots,r\} & (k=3),\\
\C\{[ST(M)\otimes e_W^i]\mid i=1,\ldots,r\} & (k=6),\\
0 & (\mbox{otherwise}),
\end{array}\right. \]
where we identify $M$ with $M\times\{\frac{1}{2}\}$ in $M\times I$, and for an oriented submanifold $\sigma$ of $M\times I$, we denote by $ST(\sigma)$ the restriction of the unit tangent 3-sphere bundle $ST(M\times I)$ to $\sigma$.
\end{Lem}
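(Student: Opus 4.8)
The plan is to run the argument of Lemma~\ref{lem:H(Conf)} verbatim, but with every dimension shifted up by one to account for $\dim(M\times I)=4$, exactly as \cite[Lemma~7.3]{Wa20} does once the invariant element $c_A$ there is replaced by $\sum_i e_W^i$. Write $Y=(M\times I)^{\times 2}$. First I would set up the long exact sequence of the pair $\bigl(Y,\Conf_2(M\times I)\bigr)$,
\[ H_{i+1}(Y;W)\to H_{i+1}(Y,\Conf_2(M\times I);W)\to H_i(\Conf_2(M\times I);W)\to H_i(Y;W). \]
Since $I$ is contractible, $Y\simeq M\times M$ and $W$ is pulled back from $M\times M$, so Assumption~\ref{assum:2-acyclic}~(1) gives $H_*(Y;W)=0$ and hence an isomorphism $H_i(\Conf_2(M\times I);W)\cong H_{i+1}(Y,\Conf_2(M\times I);W)$. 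I would also record that the blow-up $\bConf_2(M\times I)$ is obtained from $\Conf_2(M\times I)$ by attaching a collar on the sphere-bundle end, so it is homotopy equivalent to $\Conf_2(M\times I)$ and the computation transports to the compactification.

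Next I would excise to a closed tubular neighborhood $N(\Delta_{M\times I})$ of the diagonal, giving
\[ H_{i+1}(Y,\Conf_2(M\times I);W)\cong H_{i+1}(N(\Delta_{M\times I}),\partial N(\Delta_{M\times I});W). \]
Because $M$ is parallelizable, so is $M\times I$, and the rank-$4$ normal bundle of $\Delta_{M\times I}$ (canonically $T(M\times I)$) is trivial; thus the pair is $\Delta_{M\times I}\times(D^4,S^3)$, and the Thom isomorphism (Künneth over $\C$) yields
\[ H_{i+1}(N(\Delta_{M\times I}),\partial N(\Delta_{M\times I});W)\cong H_4(D^4,S^3;\C)\otimes_\C H_{i-3}(\Delta_{M\times I};W)\cong H_{i-3}(\Delta_{M\times I};W). \]
Identifying $\Delta_{M\times I}\cong M\times I\simeq M$ and invoking Assumption~\ref{assum:2-acyclic}~(2), together with the fact that the spherical $3$-manifold $M$ has $H_*(M;\C)$ of rank $1$ in degrees $0$ and $3$ and $0$ otherwise, I obtain $H_{i-3}(\Delta_{M\times I};W)\cong H_{i-3}(M;\C)^{\oplus r}$. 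This is $\C^{\oplus r}$ precisely when $i-3\in\{0,3\}$, i.e. $i\in\{3,6\}$, and $0$ otherwise, reproducing the claimed ranks and degrees.

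The step I expect to be the main obstacle is pinning down the generators rather than just the ranks. Here I would trace the composite isomorphism backward: the connecting homomorphism $\partial$ (an isomorphism since the flanking terms vanish) carries the relative class corresponding under Thom to a cycle $\sigma\otimes e_W^i$ of $H_*(\Delta_{M\times I};W)$ to the boundary of the disk bundle over $\sigma$, namely to the unit tangent sphere bundle $ST(\sigma)$ with coefficient $e_W^i$, using the identification $\partial\bConf_2(M\times I)\cong ST(M\times I)$. Applying this to the two generators $*\otimes e_W^i$ (degree $0$) and $M\otimes e_W^i$ (degree $3$, with $M=M\times\{\tfrac12\}$) of $H_*(\Delta_{M\times I};W)\cong H_*(M;\C)^{\oplus r}$ produces $[ST(*)\otimes e_W^i]$ in degree $3$ and $[ST(M)\otimes e_W^i]$ in degree $6$. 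This is exactly the point where the replacement of $c_A$ by $\sum_i e_W^i$ matters: the invariant classes $e_W^1,\dots,e_W^r$ supplied by Assumption~\ref{assum:2-acyclic}~(2) are what survive the diagonal $\pi\times\pi$-action and index the $r$ generators in each nonzero degree.

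Finally, I would address the technical caveat flagged before the statement: because $M\times I$ has boundary, $\Delta_{M\times I}$ meets it and $\bConf_2(M\times I)$ is not a smooth manifold with corners. I would circumvent this by performing the excision and Thom-isomorphism argument on the pair $\bigl(Y,\Conf_2(M\times I)\bigr)$ near the diagonal and appealing to the Poincaré--Lefschetz duality available for $\bConf_2(M\times I)$ (\cite[C.3]{Wa21}) wherever a duality statement is needed, precisely as in \cite[Lemma~7.3]{Wa20}.
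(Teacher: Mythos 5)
Your proposal is correct and takes essentially the same route as the paper: the paper's proof is, by its own statement, the proof of \cite[Lemma~7.3]{Wa20}, i.e.\ precisely the dimension-shifted version of Lemma~\ref{lem:H(Conf)} that you run (long exact sequence of the pair $((M\times I)^{\times 2},\Conf_2(M\times I))$ with $H_*((M\times I)^{\times 2};W)=0$ from Assumption~\ref{assum:2-acyclic}~(1), excision to a trivialized rank-$4$ tubular neighborhood of the diagonal, K\"{u}nneth, and identification of the generators $[ST(*)\otimes e_W^i]$, $[ST(M)\otimes e_W^i]$), with the invariant element $c_A$ replaced by $\sum_i e_W^i$. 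Your extra care about the boundary of $M\times I$ and the homotopy equivalence $\Conf_2(M\times I)\simeq \bConf_2(M\times I)$ is consistent with the paper's implicit treatment of these points.
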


\begin{Lem}[{\cite[Lemma~7.4]{Wa20}}]\label{lem:5-cycle-extend}
Let $W$ be a $\pi\times\pi$-module satisfying Assumption~\ref{assum:2-acyclic}. Then we have
$H_5(E\bConf_2(p);W)=0$ and 
the natural map
\[ H_6(E\bConf_2(p);W)
\to H_6(E\bConf_2(p),\partial E\bConf_2(p);W) \]
is zero. Thus the connecting homomorphism
\[ H_6(E\bConf_2(p),\partial E\bConf_2(p);W)
\to H_5(\partial E\bConf_2(p);W) \]
is an isomorphism.
\end{Lem}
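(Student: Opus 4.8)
The plan is to prove Lemma~\ref{lem:5-cycle-extend} by computing the relevant twisted homology groups of the total space $E\bConf_2(p)$ via the Leray--Serre spectral sequence of the fiber bundle $\bConf_2(p)\colon E\bConf_2(p)\to S^1$, whose fiber is $\bConf_2(M\times I)$ and whose base is $S^1$. Since the base is $1$-dimensional, the spectral sequence collapses into a Wang exact sequence, and the structure of the fiber homology is completely known from Lemma~\ref{lem:H(Conf(MxI)}. First I would set up the Wang sequence
\[ \cdots \to H_k(\bConf_2(M\times I);W) \xrightarrow{\;1-\mu_*\;} H_k(\bConf_2(M\times I);W) \to H_k(E\bConf_2(p);W) \to H_{k-1}(\bConf_2(M\times I);W) \to \cdots, \]
where $\mu_*$ is the monodromy action induced by the clutching diffeomorphism of the bundle $p$. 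By Lemma~\ref{lem:H(Conf(MxI)}, the fiber homology is concentrated in degrees $3$ and $6$, each isomorphic to $\C^{\oplus r}$ and generated by the classes $[ST(*)\otimes e_W^i]$ and $[ST(M)\otimes e_W^i]$ respectively.

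Second, the vanishing $H_5(E\bConf_2(p);W)=0$ would follow immediately from the Wang sequence, since both $H_5$ and $H_4$ of the fiber vanish, so the segment $H_5(\text{fiber})\to H_5(E) \to H_4(\text{fiber})$ reads $0\to H_5(E)\to 0$. For the second assertion, I would combine the vanishing of $H_5(E\bConf_2(p);W)$ with the Poincar\'{e}--Lefschetz duality of \cite[C.3]{Wa21}, which applies to $\bConf_2(M\times I)$ despite its not being a smooth manifold with corners, to control $H_6(E\bConf_2(p);W)$. The key point is that the generators of $H_6$ of the fiber are carried by $ST(M)$, which lies in the interior and is homologous to classes supported away from the boundary $\partial E\bConf_2(p)$; one must check that the monodromy $\mu_*$ acts as the identity (or at least trivially after tensoring) on these fiber classes, since $e_W^i$ is fixed by $\pi\times\pi$ and the framing-induced generators are monodromy-invariant by the naturality of the construction. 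Granting this, the natural map $H_6(E\bConf_2(p);W)\to H_6(E\bConf_2(p),\partial;W)$ factors through the fiberwise version of the map $H_6(\bConf_2(M\times I);W)\to H_6(\bConf_2(M\times I),\partial;W)$, which is zero because the $[ST(M)\otimes e_W^i]$ die in the relative group (they bound a $7$-chain obtained by sweeping the fiber over $I$, compare the proof of Lemma~\ref{lem:H(Conf)}).

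Third, with both pieces in hand, the isomorphism statement follows from the long exact sequence of the pair $(E\bConf_2(p),\partial E\bConf_2(p))$:
\[ H_6(E\bConf_2(p);W)\to H_6(E\bConf_2(p),\partial;W)\xrightarrow{\partial_*} H_5(\partial E\bConf_2(p);W)\to H_5(E\bConf_2(p);W). \]
The right-hand term vanishes by the first assertion, and the left-hand map is zero by the second assertion, so the connecting homomorphism $\partial_*$ is injective with image all of $H_5(\partial E\bConf_2(p);W)$, hence an isomorphism. I expect the main obstacle to be the triviality of the monodromy action $\mu_*$ on the fiber homology and the precise verification that the map on $H_6$ into the relative group genuinely vanishes in the family setting rather than merely fiberwise; this requires that the clutching diffeomorphism lies in $\Diff(M\times I,\partial)$, so it fixes the boundary and respects the framing $\tau_0$ near $\partial$, ensuring the generators $[ST(M)\otimes e_W^i]$ are preserved. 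Everything else is a formal consequence of the Wang sequence and the known fiber homology, so the analytic content is concentrated entirely in this monodromy-invariance check, which is why the lemma is asserted to follow by the ``same proof'' as \cite[Lemma~7.4]{Wa20} with $c_A$ replaced by $\sum_i e_W^i$.
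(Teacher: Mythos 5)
Your overall route matches the paper's: the paper simply reruns the proof of \cite[Lemma~7.4]{Wa20} with $c_A$ replaced by $\sum_i e_W^i$, and that proof is exactly the Wang exact sequence of the $\bConf_2(M\times I)$-bundle over $S^1$ combined with Lemma~\ref{lem:H(Conf(MxI)}, plus the observation that the generators of $H_6$ die in the relative group. Your deduction of $H_5(E\bConf_2(p);W)=0$ and your final step via the long exact sequence of the pair are correct. However, your justification of the middle assertion contains a genuine error. You claim that $ST(M)$ ``lies in the interior and is homologous to classes supported away from the boundary.'' This is backwards on both counts. First, $ST(M)=ST(M\times I)|_{M\times\{1/2\}}$ is \emph{not} interior: it sits inside the blow-up face $p_{B\ell}^{-1}(\Delta_{M\times I})$, which is part of $\partial\bConf_2(M\times I)\subset\partial E\bConf_2(p)$ (only the base $M\times\{1/2\}$ is interior to $M\times I$; the unit tangent sphere bundle over it lies in the boundary of the compactification). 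Second, being representable by cycles ``away from the boundary'' would be useless: a class in $H_6(E\bConf_2(p);W)$ maps to zero in $H_6(E\bConf_2(p),\partial E\bConf_2(p);W)$ precisely when it is represented by a cycle that can be pushed \emph{into} the boundary. The correct statement --- that the generators $[ST(M)\otimes e_W^i]$ are literally cycles in $\partial E\bConf_2(p)$ --- makes the vanishing immediate; your parenthetical sweeping remark is a roundabout version of this same fact, since the entire sweep $ST(M\times I)|_{M\times[0,1/2]}$ also lies in the blow-up face, so it rescues the conclusion, but as written your ``key point'' and your parenthetical contradict each other, and only the parenthetical is right.

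The second correction is that the monodromy-invariance check you single out as ``the main obstacle'' is not needed at all. Exactness of the Wang sequence at $H_6(E\bConf_2(p);W)$ reads $H_6(F;W)\xrightarrow{i_*}H_6(E\bConf_2(p);W)\to H_5(F;W)=0$ for the fiber $F=\bConf_2(M\times I)$, so $i_*$ is surjective regardless of what $\mu_*$ does; surjectivity of $i_*$ together with the boundary-cycle observation above already forces the map $H_6(E\bConf_2(p);W)\to H_6(E\bConf_2(p),\partial E\bConf_2(p);W)$ to vanish. (Triviality of $\mu_*$ on these classes does hold --- the clutching diffeomorphism fixes $\partial(M\times I)$ pointwise and the classes are represented by cycles that can be taken there --- but it is superfluous, as is the appeal to Poincar\'{e}--Lefschetz duality.) With these two repairs your argument coincides with the paper's.
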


Roughly, a propagator in family $E\bConf_2(p)$ is constructed as follows. The boundary of $\bConf_2(X)$ is $p_{B\ell}^{-1}(\Delta_X\cup (\partial X\times X)\cup (X\times\partial X))$, where $p_{B\ell}\colon \bConf_2(X)\to X\times X$ is the canonical blow-down projection. 
\begin{itemize}
\item On the stratum of $\partial E\bConf_2(p)$ corresponding to the part $p_{B\ell}^{-1}(\Delta_X)$, we take the 5-chain $s_\tau(E)\otimes e_W^i$, where $s_\tau\colon E\to ST^vE$ is the section given by the normalization of the first vector of a vertical framing $\tau\colon T^vE=\mathrm{Ker}\,dp\stackrel{\cong}{\to} \R^4\times E$. 

\item On the stratum of $\partial E\bConf_2(p)$ corresponding to the part $p_{B\ell}^{-1}((\partial X\times X)\cup (X\times \partial X))$, we take the pullbacks of the copies of $\omega^i$ of Lemma~\ref{lem:propagator} in the subspace $p_{B\ell}^{-1}((M\times \{0\})\times (M\times \{0\}))\cup p_{B\ell}^{-1}((M\times \{1\})\times (M\times \{1\}))\cong \bConf_2(M)\coprod\bConf_2(M)$. This makes sense since the bundle $p$ is trivialized over $\partial X$. 
\end{itemize}
The sum of these 5-chains of $\partial E\bConf_2(p)$ is a cycle. Then by Lemma~\ref{lem:5-cycle-extend}, it has an extension to a 6-chain $\widetilde{\omega}^i$ of $E\bConf_2(p)$. We call $\widetilde{\omega}=\sum_i\widetilde{\omega}^i$ a propagator in family.

\mysection{Properties of the invariant $Z_\Theta^\even$}{s:Z}

Roughly, the invariant $Z_\Theta^\even$ is defined by choosing three propagators $\widetilde{\omega}_1,\widetilde{\omega}_2,\widetilde{\omega}_3$ in family $E\bConf_2(p)$ with some boundary conditions and then by 
\[ Z_\Theta^\even(\widetilde{\omega}_1,\widetilde{\omega}_2,\widetilde{\omega}_3)=\frac{1}{6}\Tr_\Theta\langle\widetilde{\omega}_1,\widetilde{\omega}_2,\widetilde{\omega}_3\rangle_\Theta\in \calA_\Theta^\even(W), \]
where $\Tr_\Theta\colon W^{\otimes 3}\to \calA_\Theta^\even(W)$ is the projection, $\langle -,-,-\rangle_\Theta$ is the triple intersection among chains with twisted coefficients (\cite[\S{5}]{Wa20}). We will not repeat the detailed definition here. The only difference in the proof of the well-definedness of $Z_\Theta^\even$ is to replace the invariant element $c_A$ in \cite[Proofs of Theorem~5.3 and Propositon~7.1]{Wa20} with $\sum_i e_W^i$. 
The property we need is the following.

\begin{Prop}[{\cite[Theorem~6.2]{Wa20}}]\label{prop:surgery}
Let $X,M,\pi,W$ be as in Lemma~\ref{lem:ex-assum}. 
Then for any $g_1,g_2,g_3\in\pi$, an element $\Psi_1(\Theta(g_1,g_2,g_3))$ of $\Omega_1^{SO}(B\Diff_0(X,\partial))$ is defined by surgery on an embedded theta-graph associated to $(g_1,g_2,g_3)$, which belongs to the image from $\pi_1B\Diff_0(X,\partial)$, and the following identity holds.
\[ Z_\Theta^\even(\Psi_1(\Theta(g_1,g_2,g_3)))=2\,[\rho_W(g_1)\wedge\rho_W(g_2)\wedge\rho_W(g_3)], \]
where $\rho_W\colon\C[\pi]\to W=\bigoplus_i\End(A_i)$ is the representation of $\pi$.
\end{Prop}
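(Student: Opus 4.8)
The plan is to reproduce the local computation of \cite[Theorem~6.2]{Wa20}, changing only the coefficient system: wherever the invariant element $c_A$ appears there, I substitute $\sum_i e_W^i\in W$, whose existence and properties are furnished by Assumption~\ref{assum:2-acyclic} and Lemma~\ref{lem:propagator}. Since Lemma~\ref{lem:propagator}(2) shows that $Z_\Theta^\even$ is independent of the chosen propagators in family, it suffices to evaluate the triple intersection for one convenient choice.

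First I would set up the surgery and its mapping torus. An embedded theta-graph has two trivalent vertices $v_0,v_1$ joined by three edges $e_1,e_2,e_3$, the decoration $g_i\in\pi$ recording the holonomy carried by $e_i$ in the local system $W$. The surgery, supported in a tubular neighborhood of the graph, produces a diffeomorphism of $X=M\times I$ fixing $\partial X$ and homotopic to the identity; its isotopy class is the element of $\pi_1B\Diff_0(X,\partial)=\pi_0\Diff_0(X,\partial)$ whose image under $\pi_1B\Diff_0(X,\partial)\to\Omega_1^{SO}(B\Diff_0(X,\partial))$ is $\Psi_1(\Theta(g_1,g_2,g_3))$. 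I would then work on the mapping torus $p\colon E\to S^1$ of this diffeomorphism, choosing the three propagators in family $\widetilde{\omega}_1,\widetilde{\omega}_2,\widetilde{\omega}_3$ so that, outside a neighborhood of the surgery region, they coincide with a fixed reference propagator pulled back from the trivial bundle, while the modifications introduced by the surgery are supported near the edges of the graph. Making the $\widetilde{\omega}_a$ mutually transverse, the triple intersection $\langle\widetilde{\omega}_1,\widetilde{\omega}_2,\widetilde{\omega}_3\rangle_\Theta$ becomes a signed, $W^{\otimes3}$-weighted count of isolated points, which this choice confines to fiber configurations in which the two points of $\bConf_2(M\times I)$ sit at the vertices $v_0,v_1$.

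The local count then runs as follows. At each such point the three propagators connect $v_0$ to $v_1$ along the three edges in some order, so summing over the intersection points reproduces the full signed $S_3$-orbit $\sum_{\sigma\in S_3}\mathrm{sgn}(\sigma)\,\rho_W(g_{\sigma(1)})\otimes\rho_W(g_{\sigma(2)})\otimes\rho_W(g_{\sigma(3)})$, the weight of the factor from edge $e_i$ being its holonomy $\rho_W(g_i)\in W=\bigoplus_i\End(A_i)$. There are two families of such points, according to the two ordered placements of the configuration points on $\{v_0,v_1\}$; the placement $(v_1,v_0)$ traverses each edge in the reverse direction and so contributes the image of the first placement under the involution $\tau\colon g\mapsto g^{-1}$. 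Because $\calA_\Theta^\even(W)=\bigl(\bigwedge^3W\bigr)^{(\pi\times\pi)\rtimes\Z_2}$ consists of $\tau$-invariants, the two placements yield equal classes, and this is the source of the overall factor $2$. Applying $\tfrac16\Tr_\Theta$ collapses the $S_3$-sum to a single exterior product, giving $2\,[\rho_W(g_1)\wedge\rho_W(g_2)\wedge\rho_W(g_3)]$.

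The main obstacle is the localization step: one must check that the propagators in family can genuinely be arranged so that every triple intersection lies in a neighborhood of the theta-graph, with no stray contributions from the interior or the boundary strata of $E\bConf_2(p)$, and that the signs and holonomy weights assemble exactly into the signed $S_3$-sum above. This is precisely where the passage from a single irreducible $A$ to the general $W$ of Lemma~\ref{lem:ex-assum} must be matched against \cite[\S6]{Wa20}; the acyclicity in Assumption~\ref{assum:2-acyclic}, together with the uniqueness in Lemma~\ref{lem:propagator}(2), guarantees both that the count is well defined and that it is insensitive to the auxiliary choices, so that the localized count indeed computes $Z_\Theta^\even$.
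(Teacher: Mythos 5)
Your proposal takes essentially the same approach as the paper: the paper proves this proposition simply by invoking the computation of \cite[Theorem~6.2]{Wa20} verbatim, with the sole modification of replacing the invariant element $c_A$ there by $\sum_i e_W^i$, the existence and uniqueness of the needed propagators for the more general coefficient system $W$ being exactly what Assumption~\ref{assum:2-acyclic} and the lemmas of Section~\ref{s:homol} supply. Your additional sketch of the localized intersection count (the signed $S_3$-orbit at the two vertex placements producing the factor $2$ after applying $\tfrac16\Tr_\Theta$) is consistent with the cited computation, so the proposal is correct.
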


Note that the invariant $Z_\Theta^\even$ in \cite{Wa20} was defined on a slightly different group\footnote{This should not be confused with the classifying space of $\widetilde{\Diff}(M\times I,\partial)$.} $\pi_1 \widetilde{B\Diff}_{\mathrm{deg}}(M\times I,\partial)$ than $\pi_1B\Diff_0(X,\partial)$, however, it can be shown that $Z_\Theta^\even$ descends to a map from the image of $\pi_1 \widetilde{B\Diff}_{\mathrm{deg}}(M\times I,\partial)\to \pi_1B\Diff_0(M\times I,\partial)$ (Lemma~\ref{lem:Z-descends} below). Namely, the homotopy fiber of the natural map $\widetilde{B\Diff}_{\mathrm{deg}}(M\times I,\partial)\to B\Diff_0(M\times I,\partial)$ is given by 
\[ \Map((M\times I,\partial), (SO_4,1))\times \Map((M\times I,\partial),(M\times I,\partial))_{\mathrm{id}}, \]
where $\partial=\partial(M\times I)$, $\Map((A,\partial A),(C,D))$ denotes the space of continuous maps $(A,\partial A)\to (C,D)$ with the $C^0$-topology that agree with the base map on $\partial A$, $(-)_{\mathrm{id}}$ denotes the component of the identity. The first factor $\Map((M\times I,\partial), (SO_4,1))$ is identified with the space of framings on $M\times I$, the second factor $\Map((M\times I,\partial),(M\times I,\partial))_{\mathrm{id}}$ is used to give a fiberwise (relative) degree one map $(E,\partial E)\to (M\times I, \partial)$ to pullback a local coefficient system on $M$. 

\begin{Lem}
For a spherical 3-manifold $M$,
\[\begin{split}
&\Map((M\times I,\partial), (SO_4,1))\times \Map((M\times I,\partial),(M\times I,\partial))_{\mathrm{id}}\\
&\simeq \Omega\Map(M,SO_4)\times \Omega\Map(M,M)_{\mathrm{id}}.
\end{split} \]
Furthermore, there is a fibration sequence:
\[
\Omega^4SO_4\longrightarrow 
\Omega\Map(M,SO_4)\longrightarrow
\Omega SO_4. \]
\end{Lem}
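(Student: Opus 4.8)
The plan is to treat the two displayed statements separately: the product of mapping spaces by exponential adjunction, and the fibration sequence by looping an evaluation fibration.

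For the first equivalence, recall that $M$ is closed, so $\partial(M\times I)=M\times\{0\}\cup M\times\{1\}$, and the adjunction $\Map(M\times I,Y)\cong\Map(I,\Map(M,Y))$ carries the two boundary constraints on $M\times\{0\}$ and $M\times\{1\}$ to the two endpoint constraints of a path in $\Map(M,Y)$. For the first factor ($Y=SO_4$ with base value $1$) this says directly that a map which is constantly $1$ on $M\times\{0,1\}$ is a loop in $\Map(M,SO_4)$ based at the constant map $1$, whence $\Map((M\times I,\partial),(SO_4,1))\cong\Omega\Map(M,SO_4)$. For the second factor I would split the target via its coordinate projections, writing $g=(g_1,g_2)$ with $g_1\colon M\times I\to M$ and $g_2\colon M\times I\to I$; the condition $g|_\partial=\mathrm{id}$ becomes $g_1(\cdot,0)=g_1(\cdot,1)=\mathrm{id}_M$ together with $g_2(\cdot,0)=0,\ g_2(\cdot,1)=1$, and the pair condition is then automatic. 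Since $I$ is convex the space of admissible $g_2$ is contractible, while the space of admissible $g_1$ is, by the same adjunction, $\Omega\Map(M,M)$ based at $\mathrm{id}_M$. Passing to identity components and tracking the image of $\mathrm{id}_{M\times I}$ (which corresponds to the constant loop at $\mathrm{id}_M$) yields $\Map((M\times I,\partial),(M\times I,\partial))_{\mathrm{id}}\simeq\Omega\Map(M,M)_{\mathrm{id}}$, and the product of the two identifications is the asserted equivalence.

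For the fibration sequence I would begin from the evaluation fibration $\Map_*(M,SO_4)\to\Map(M,SO_4)\xrightarrow{\mathrm{ev}}SO_4$ at a basepoint of $M$ (a fibration because the basepoint inclusion is a cofibration), with fiber the based mapping space. Since $\Omega$ takes a fibration to a fibration, this becomes $\Omega\Map_*(M,SO_4)\to\Omega\Map(M,SO_4)\to\Omega SO_4$, which already matches the total space and base of the claim. It then remains to identify the fiber, and by the loop--suspension adjunction $\Omega\Map_*(M,SO_4)\cong\Map_*(\Sigma M,SO_4)$, so the task reduces to comparing $\Map_*(\Sigma M,SO_4)$ with $\Omega^4SO_4=\Map_*(S^4,SO_4)$.

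This last comparison is where I expect the real difficulty to lie. I would use the degree-one collapse $M\to S^3$ onto the top cell, suspended to $c\colon\Sigma M\to S^4$, inducing $c^*\colon\Omega^4SO_4\to\Map_*(\Sigma M,SO_4)$. The key structural inputs are that $M$ has finite $\pi_1$, hence $b_1(M)=0$ and $M$ is a (rational) homology $3$-sphere, so $\Sigma M$ is simply connected with $\widetilde H_*(\Sigma M)$ equal to $\Gamma^{\mathrm{ab}}=H_1(M)$ in degree $2$ and $\Z$ in degree $4$; and that $\pi_2SO_4=0$, which makes the degree-$2$ torsion of $\Sigma M$ invisible at the bottom of the obstruction tower. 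When $\Gamma=\pi_1M$ is perfect (for instance $\Gamma=SL_2(\F_5)$ for the Poincar\'e sphere) one has $\Gamma^{\mathrm{ab}}=0$, so $\Sigma M$ is an integral homology $S^4$ and $c$ is a homotopy equivalence by Hurewicz and Whitehead, giving $\Map_*(\Sigma M,SO_4)\simeq\Omega^4SO_4$ on the nose. For general spherical $M$ the class $\Gamma^{\mathrm{ab}}$ survives in $H_2(\Sigma M)$ and contributes finite homotopy to $\Map_*(\Sigma M,SO_4)$ through the cofiber sequence $\Sigma M^{(2)}\to\Sigma M\to S^4$, so the identification with $\Omega^4SO_4$ must be understood at the level of the homotopy groups that enter the descent argument rather than as a bare integral equivalence. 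Pinning down exactly which groups are needed, and verifying that the $\Gamma^{\mathrm{ab}}$-torsion does not interfere there, is the delicate step of the proof.
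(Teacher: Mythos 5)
Your route is the same as the paper's, for both halves. For the first displayed equivalence the paper simply asserts the two identifications $\Map((M\times I,\partial),(SO_4,1))\simeq\Omega\Map(M,SO_4)$ and $\Map((M\times I,\partial),(M\times I,\partial))_{\mathrm{id}}\simeq\Omega\Map(M,M)_{\mathrm{id}}$; your exponential-law argument, with the coordinate splitting $g=(g_1,g_2)$ and the convexity of the admissible $g_2$'s, is a correct proof of them (your component bookkeeping is no looser than the paper's own). For the fibration sequence the paper does exactly what you do: it loops the evaluation fibration $\Map_*(M,SO_4)\to\Map(M,SO_4)\stackrel{\mathrm{ev}}{\to}SO_4$ and identifies the fiber as $\Omega\Map_*(M,SO_4)\simeq\Map_*(S^1\wedge M,SO_4)$. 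The one remaining step --- the equivalence $\Map_*(\Sigma M,SO_4)\simeq\Map_*(S^4,SO_4)$ --- is asserted in the paper in a single line with no argument. So the step you flag as ``delicate'' and leave open is precisely the step the paper does not prove.

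That gap is genuine, and your suspicion about the $H_1$-torsion is correct: the asserted equivalence is false for general spherical $M$, not merely hard. For $M=L(n,q)$, the space $\Sigma M$ is simply connected with $H_2\cong\Z_n$, $H_4\cong\Z$, and $[\Sigma M,SO_4]_*$ is a group (source a suspension) admitting a surjective homomorphism onto $H^3(\Sigma M;\Z)\cong\mathrm{Ext}(\Z_n,\Z)\cong\Z_n$: compose with the bundle projection $SO_4\to S^3$ (split by quaternionic left translation, hence surjective on $[\Sigma M,-]$) and with $S^3\to K(\Z,3)$, which is surjective on $[\Sigma M,-]$ because its homotopy fiber is $3$-connected and $\dim\Sigma M=4$. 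If $\Map_*(\Sigma M,SO_4)\simeq\Omega^4SO_4$, then this group would have exactly $|\pi_4SO_4|=4$ elements, forcing $n\mid 4$; so the lemma's literal statement fails for every $n\notin\{1,2,4\}$, while it does hold when $\pi_1M$ is perfect, by your Hurewicz--Whitehead argument ($\Sigma M\simeq S^4$). The correct general statement replaces the fiber $\Omega^4SO_4$ by $\Map_*(\Sigma M,SO_4)$, and your closing suggestion --- isolate which homotopy groups the descent argument actually needs --- is exactly the right repair: Lemma~\ref{lem:Z-descends} only uses that $\pi_2\Map(M,SO_4)$ is torsion, and that survives, since $\pi_1\Map_*(\Sigma M,SO_4)\cong[\Sigma^2 M,SO_4]_*$ is a finite $2$-group by obstruction theory ($\widetilde{H}_*(\Sigma^2M)$ is $H_1(M)$ in degree $3$ and $\Z$ in degree $5$, so the only obstruction groups are $H^4(\Sigma^2M;\pi_4SO_4)\cong\mathrm{Ext}(H_1(M),\Z_2\oplus\Z_2)$ and $H^5(\Sigma^2M;\pi_5SO_4)\cong\Z_2\oplus\Z_2$, while $H^3(\Sigma^2M;\pi_3SO_4)=\mathrm{Hom}(H_1(M),\Z\oplus\Z)=0$), and torsion classes die in the $\C$-vector space $\calA_\Theta^\even(W)$. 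Completed this way, your proposal does not just reproduce the paper's proof; it repairs it, which matters because the paper applies the result to lens spaces.
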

\begin{proof} We have the following homotopy equivalences:
\[ \begin{split}
&\Map((M\times I,\partial), (SO_4,1))\simeq \Omega\Map(M,SO_4),\\
&\Map(M\times I,\partial),(M\times I,\partial))_{\mathrm{id}}\simeq \Omega\Map(M,M)_{\mathrm{id}},
\end{split} \]
where the basepoints of $\Map(M,SO_4)$ and $\Map(M,M)_{\mathrm{id}}$ are the constant map to 1 and $\mathrm{id}$, respectively. Furthermore, we have the following fibration sequence:
\[
\Map_*(M,SO_4)\to \Map(M,SO_4)\stackrel{\mathrm{ev}}{\to} SO_4,
\]
where $\mathrm{ev}$ is the evaluation at a fixed basepoint of $M$, and $\Map_*(-,-)$ is the subspace of $\Map(-,-)$ of pointed maps. Then the result follows by the homotopy equivalence:
$\Omega\Map_*(M,SO_4)\simeq \Map_*(S^1\wedge M,SO_4)\simeq \Map_*(S^4,SO_4)$.
\end{proof}
\begin{Lem}\label{lem:Z-descends}
Let $\Gamma_0(M\times I)$ denote the image of the natural map $\pi_1\widetilde{B\Diff}_{\mathrm{\deg}}(M\times I,\partial)\to \pi_1B\Diff_0(M\times I)$. 
The homomorphism $Z_\Theta^\even\colon \pi_1\widetilde{B\Diff}_{\mathrm{\deg}}(M\times I,\partial)\to \calA_\Theta^\even(W)$ descends to a map 
$\Gamma_0(M\times I)\to \calA_\Theta^\even(W)$.
\end{Lem}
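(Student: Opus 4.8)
The plan is to show that the homomorphism $Z_\Theta^\even$ kills the kernel of the natural map $q_*\colon\pi_1\widetilde{B\Diff}_{\mathrm{deg}}(M\times I,\partial)\to\pi_1 B\Diff_0(M\times I)$; since $\Gamma_0(M\times I)=\mathrm{im}\,q_*$ and $Z_\Theta^\even$ is a homomorphism, this is precisely the descent we want. Writing $F$ for the homotopy fiber of $\widetilde{B\Diff}_{\mathrm{deg}}(M\times I,\partial)\to B\Diff_0(M\times I,\partial)$, the long exact sequence identifies $\ker q_*$ with the image of $j_*\colon\pi_1 F\to\pi_1\widetilde{B\Diff}_{\mathrm{deg}}(M\times I,\partial)$, so it suffices to prove $Z_\Theta^\even\circ j_*=0$. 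The crucial point is that the target $\calA_\Theta^\even(W)$ is a $\C$-vector space, hence torsion-free; therefore it is enough to show that $\pi_1 F$ is a torsion group. By the previous lemma $F\simeq\Omega\Map(M,SO_4)\times\Omega\Map(M,M)_{\mathrm{id}}$, so $\pi_1 F\cong\pi_2\Map(M,SO_4)\oplus\pi_2\Map(M,M)_{\mathrm{id}}$, and I would treat the two summands in turn.

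For the framing summand I would feed the fibration $\Omega^4 SO_4\to\Omega\Map(M,SO_4)\to\Omega SO_4$ of the previous lemma into its long exact sequence. Since $\pi_1\Omega SO_4=\pi_2 SO_4=0$, the map $\pi_5 SO_4=\pi_1\Omega^4 SO_4\to\pi_1\Omega\Map(M,SO_4)=\pi_2\Map(M,SO_4)$ is surjective, and because $SO_4$ is double covered by $S^3\times S^3$ we have $\pi_5 SO_4\cong\Z_2\oplus\Z_2$. Hence $\pi_2\Map(M,SO_4)$ is a $2$-torsion group.

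For the degree-one-map summand I would use that $M$ is a rational homology $3$-sphere. Passing to $\pi$-equivariant self-maps of the universal cover $S^3$ identifies $\Map(M,M)_{\mathrm{id}}$ with the identity component of the space of sections of the associated $S^3$-bundle $S^3\times_\pi S^3\to M$, whose homotopy is computed by the Federer-type spectral sequence $H^s(M;\pi_t(S^3))\Rightarrow\pi_{t-s}\Map(M,M)_{\mathrm{id}}$. The only non-torsion fiber homotopy is $\pi_3(S^3)\cong\Z$, on which $\pi$ acts trivially since $\Gamma\subset SO(4)$ is orientation-preserving; its contribution to $\pi_2$ lies in $H^1(M;\Z)=\Hom(H_1 M,\Z)=0$ because $H_1 M$ is finite, while every remaining entry of the spectral sequence is torsion. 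Thus $\pi_2\Map(M,M)_{\mathrm{id}}$ is torsion as well.

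Combining the two summands, $\pi_1 F$ is torsion, so the homomorphism $Z_\Theta^\even\circ j_*$ into the torsion-free group $\calA_\Theta^\even(W)$ vanishes, and $Z_\Theta^\even$ descends to $\Gamma_0(M\times I)$. The main obstacle is the degree-one-map summand: unlike the framing factor, which the previous lemma resolves directly through an explicit fibration, controlling $\pi_2\Map(M,M)_{\mathrm{id}}$ requires handling the non-simply-connected target $M$, and the cleanest route is the equivariant section-space description together with the vanishing $H^1(M;\Z)=0$. One final routine check is that these homotopy groups are finitely generated, so that rational triviality indeed yields torsion; this holds since $M$ is a closed manifold and the relevant mapping spaces have CW homotopy type with finitely generated homotopy groups.
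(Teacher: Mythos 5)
Your proposal is correct, and its first half coincides with the paper's own argument: the paper likewise feeds the fibration $\Omega^4SO_4\to\Omega\Map(M,SO_4)\to\Omega SO_4$ into the homotopy exact sequence, using $\pi_1\Omega SO_4=\pi_2SO_4=0$, to conclude that $\pi_2\Map(M,SO_4)$ is a quotient of $\pi_5SO_4=\Z_2\oplus\Z_2$, which cannot contribute to a homomorphism into the $\C$-vector space $\calA_\Theta^\even(W)$. Where you genuinely diverge is the factor $\Map((M\times I,\partial),(M\times I,\partial))_{\mathrm{id}}$. The paper does not compute $\pi_2\Map(M,M)_{\mathrm{id}}$ at all; instead it unwinds how this factor enters the construction: its only role is to fix the fiberwise degree-one map used to pull back the local coefficient system to the total space of the bundle $p\colon E\to S^1$, and that coefficient system depends only on the induced map $\pi_1E\to\pi_1M$, which is canonically the projection because $\pi_1E\cong\pi_1S^1\times\pi_1M$ by van Kampen and the homotopical triviality of elements of $\Diff_0(M\times I,\partial)$; so changing the lift within this factor literally does not change the data from which $Z_\Theta^\even$ is computed. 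Your route replaces this structural insensitivity argument by a homotopy-theoretic one: pass to $\pi$-equivariant self-maps of $S^3$, i.e.\ to sections of $S^3\times_\pi S^3\to M$ (strictly speaking the equivariant mapping space is a finite covering of $\Map(M,M)_{\mathrm{id}}$ with fiber the center of $\pi$, which is harmless for $\pi_2$), and run the section-space spectral sequence, the key inputs being $\pi_2S^3=0$, the vanishing $H^1(M;\Z)=\Hom(H_1M,\Z)=0$ with the trivial monodromy on $\pi_3S^3$ coming from $\Gamma\subset SO(4)$, and finiteness of $\pi_tS^3$ for $t\geq 4$. What your approach buys is a strictly stronger and more portable conclusion: $\pi_1$ of the entire homotopy fiber is torsion, so \emph{any} homomorphism from $\pi_1\widetilde{B\Diff}_{\mathrm{deg}}(M\times I,\partial)$ to a torsion-free abelian group descends, with no reference to how $Z_\Theta^\even$ is built; the cost is the equivariant section-space machinery, which the paper's shorter definitional argument avoids. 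One small point: your closing worry about finite generation is unnecessary — your arguments establish torsion directly (a quotient of $\Z_2\oplus\Z_2$, and finite spectral-sequence entries), and a homomorphism from a torsion group to a torsion-free group vanishes without any finiteness hypothesis.
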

\begin{proof}

Since $\pi_1\Omega SO_4=0$ and $\pi_1\Omega^4 SO_4=\pi_5SO_4=\Z_2\oplus \Z_2$, $\pi_1\Omega\Map(M,SO_4)=\pi_2\Map(M,SO_4)$ is a quotient of $\Z_2\oplus \Z_2$. Moreover, a change of the choice of the lift of an element of $\Gamma_0(M\times I)$ to $\pi_1\widetilde{B\Diff}_{\mathrm{\deg}}(M\times I,\partial)$ within the factor $\Map((M\times I,\partial),(M\times I,\partial))_{\mathrm{id}}$ in the homotopy fiber does not affect the value of $Z_\Theta^\even$ since the local coefficient system on the total space of the corresponding $(M\times I)$-bundle $p\colon E\to S^1$, which is needed to define $Z_\Theta^\even$, is determined by the homotopy class of the induced map $\pi_1E\to \pi_1M$, which is canonically fixed since we have the canonical decomposition $\pi_1E=\pi_1 S^1\times \pi_1 M$ by the van Kampen theorem and the homotopical triviality of elements of $\Diff_0(M\times I,\partial)$, and the map $\pi_1E\to \pi_1M$ is just the projection to the second factor.
This completes the proof.
\end{proof}

Since $\rho_W\colon\C[\pi]\to W=\bigoplus_i\End(A_i)$ in Proposition~\ref{prop:surgery} is surjective for $W=\mathrm{Ker}\,\ve$ (see (\ref{eq:cpi})), we have the following.
\begin{Cor}
The homomorphism $Z_\Theta^\even\colon \Gamma_0(M\times I)\otimes\C\to \calA_\Theta^\even(\mathrm{Ker}\,\ve)$ is surjective.
\end{Cor}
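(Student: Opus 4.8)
The plan is to deduce surjectivity directly from Proposition~\ref{prop:surgery} together with the surjectivity of $\rho_W$, reducing everything to elementary linear algebra. First I would use that $Z_\Theta^\even$ is a $\C$-linear map on $\Gamma_0(M\times I)\otimes\C$, so that its image is a $\C$-linear subspace of $\calA_\Theta^\even(\mathrm{Ker}\,\ve)$; it therefore suffices to exhibit a spanning set of $\calA_\Theta^\even(\mathrm{Ker}\,\ve)$ lying inside that image.

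The spanning set is produced by the theta-graph surgeries. By Proposition~\ref{prop:surgery}, for every triple $(g_1,g_2,g_3)\in\pi^3$ the class $\Psi_1(\Theta(g_1,g_2,g_3))$ lifts to $\Gamma_0(M\times I)$ (compatibly with the descent of Lemma~\ref{lem:Z-descends}) and satisfies
\[ Z_\Theta^\even(\Psi_1(\Theta(g_1,g_2,g_3)))=2\,[\rho_W(g_1)\wedge\rho_W(g_2)\wedge\rho_W(g_3)]. \]
Since the image is a $\C$-subspace, the coefficient $2$ is immaterial, and the image consequently contains every class $[\rho_W(g_1)\wedge\rho_W(g_2)\wedge\rho_W(g_3)]$ with $g_i\in\pi$.

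Next I would argue that these classes span. For $W=\mathrm{Ker}\,\ve$ the map $\rho_W\colon\C\pi\to W=\bigoplus_{i\neq[1]}\End(A_i)$ is surjective by the decomposition (\ref{eq:cpi}); since the group elements $g\in\pi$ span $\C\pi$ over $\C$, the vectors $\rho_W(g)$ span $W$, and hence the decomposable tensors $\rho_W(g_1)\otimes\rho_W(g_2)\otimes\rho_W(g_3)$ span $W^{\otimes 3}$. Finally, because $(\pi\times\pi)\rtimes\Z_2$ is finite and we work over $\C$, Maschke's theorem makes the projection $\Tr_\Theta\colon W^{\otimes 3}\to\calA_\Theta^\even(W)$ onto the invariant part surjective; applying it to the spanning set of $W^{\otimes 3}$ shows that the classes $[\rho_W(g_1)\wedge\rho_W(g_2)\wedge\rho_W(g_3)]$ span $\calA_\Theta^\even(\mathrm{Ker}\,\ve)$, which is exactly what is needed.

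The only genuinely delicate point is bookkeeping rather than mathematics: one must be sure that the surgery classes really land in the subgroup $\Gamma_0(M\times I)$ on which $Z_\Theta^\even$ is defined, i.e.\ that the descent of Lemma~\ref{lem:Z-descends} is compatible with the evaluation formula of Proposition~\ref{prop:surgery}. Once this is granted, the argument is pure linear algebra, and the coefficient $2$ together with the complexification dispose of any integrality subtleties.
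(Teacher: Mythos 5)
Your proposal is correct and takes essentially the same route as the paper: the paper's own proof is just the one-sentence observation that $\rho_W\colon\C[\pi]\to W$ is surjective for $W=\mathrm{Ker}\,\ve$ by (\ref{eq:cpi}), combined with the evaluation formula of Proposition~\ref{prop:surgery}. Your write-up merely makes explicit the linear algebra the paper leaves implicit (wedges of a spanning set of $W$ span $\tbigwedge^3 W$, and the averaging projection onto the $(\pi\times\pi)\rtimes\Z_2$-invariants is surjective), as well as the compatibility with Lemma~\ref{lem:Z-descends}, so there is no gap.
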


\mysection{Example 1: Poincar\'{e} homology sphere}{s:upper}

\subsection{The group $\mathrm{SL}_2(\F_5)$}

Let $\hat{\pi}$ denote the set of conjugacy classes of $\pi=\mathrm{SL}_2(\F_5)$. It is known that $\hat{\pi}$ has 9 elements, represented respectively by the following elements:
\[\begin{split}
 &\pm I=\left(\begin{array}{cc}
  \pm 1 & 0\\
  0 & \pm 1
\end{array}\right),\quad
\alpha=\left(\begin{array}{cc}
  2 & 0\\
  0 & 3
\end{array}\right),\quad
\beta=\left(\begin{array}{cc}
  0 & -1\\
  1 & -1
\end{array}\right),\quad
\beta'=\left(\begin{array}{cc}
  0 & -1\\
  1 & 1
\end{array}\right),\\
&\gamma=\left(\begin{array}{cc}
  1 & 1\\
  0 & 1
\end{array}\right),\quad
\gamma'=\left(\begin{array}{cc}
  1 & 2\\
  0 & 1
\end{array}\right),\quad
-\gamma=\left(\begin{array}{cc}
  -1 & -1\\
  0 & -1
\end{array}\right),\quad
-\gamma'=\left(\begin{array}{cc}
  -1 & -2\\
  0 & -1
\end{array}\right).
\end{split} \]

We give a list of all elements of $\mathrm{SL}_2(\F_5)$ in Appendix~\ref{s:elements}.

\begin{Lem}\label{lem:involution-inv}
For $\pi=\mathrm{SL}_2(\F_5)$, $\hat{\pi}$ is invariant under taking the inverse. Namely, for each class $[x]\in\hat{\pi}$, we have $[x^{-1}]=[x]$.
\end{Lem}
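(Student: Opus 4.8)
The plan is to reduce the nine verifications to a short case analysis by exploiting the fact that in $\mathrm{SL}_2$ the trace is inverse-invariant. Indeed, for any $A\in\mathrm{SL}_2(\F_5)$ the Cayley--Hamilton relation gives $A+A^{-1}=(\mathrm{tr}\,A)\,I$, so $\mathrm{tr}(A^{-1})=\mathrm{tr}(A)$; hence $[x^{-1}]$ must lie among the conjugacy classes sharing the trace of $[x]$. Reading off the traces of the nine representatives ($\mathrm{tr}(\pm I)=\pm 2$, $\mathrm{tr}\,\alpha=0$, $\mathrm{tr}\,\beta=-1$, $\mathrm{tr}\,\beta'=1$, $\mathrm{tr}\,\gamma=\mathrm{tr}\,\gamma'=2$, $\mathrm{tr}(-\gamma)=\mathrm{tr}(-\gamma')=-2$), I observe that the traces $0$, $1$ and $-1$ are each realized by a single class, namely $\alpha$, $\beta'$ and $\beta$. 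For these three, $[x^{-1}]=[x]$ is immediate, since $x^{-1}$ has the same trace as $x$ and there is no other class to go to.

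The central classes $[I]$ and $[-I]$ are self-inverse as elements, so it remains to treat the classes of trace $\pm 2$. First I would dispose of the trace $-2$ classes by the central factorization $-\gamma=(-I)\gamma$ and $-\gamma'=(-I)\gamma'$: since $-I$ is central, $g(-\gamma)g^{-1}=(-I)\,g\gamma g^{-1}$, so the problem for $-\gamma,-\gamma'$ is equivalent to that for $\gamma,\gamma'$. Concretely, $(-\gamma)^{-1}=(-I)\gamma^{-1}$ is conjugate to $-\gamma$ as soon as $\gamma^{-1}$ is conjugate to $\gamma$, and similarly for $\gamma'$. Thus everything comes down to the two nontrivial unipotent classes $[\gamma]$ and $[\gamma']$.

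For these I would invoke the standard classification of nontrivial unipotents: conjugation by $\mathrm{diag}(t,t^{-1})$ sends $\begin{pmatrix}1&b\\0&1\end{pmatrix}$ to $\begin{pmatrix}1&t^2b\\0&1\end{pmatrix}$, so $\begin{pmatrix}1&b\\0&1\end{pmatrix}$ and $\begin{pmatrix}1&b'\\0&1\end{pmatrix}$ are $\mathrm{SL}_2(\F_5)$-conjugate exactly when $b'/b$ is a square in $\F_5^\times$. Now $\gamma^{-1}=\begin{pmatrix}1&-1\\0&1\end{pmatrix}$ and $\gamma'^{-1}=\begin{pmatrix}1&-2\\0&1\end{pmatrix}$ are obtained from $\gamma,\gamma'$ by negating the superdiagonal entry, and since $-1=2^2$ is a square in $\F_5^\times$, negation preserves the square class. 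Hence $\gamma^{-1}\sim\gamma$ and $\gamma'^{-1}\sim\gamma'$, which completes the argument.

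The one genuinely non-formal point --- the main obstacle --- is exactly this last case: the trace does not separate the three classes $[I],[\gamma],[\gamma']$ of trace $2$, and a priori inversion could interchange the two unipotent classes. What saves us is that $-1$ is a square modulo $5$; over a field in which $-1$ is a nonsquare the two unipotent classes would instead be swapped by inversion, so this step uses the arithmetic of $\F_5$ in an essential way. One could alternatively appeal to the general fact that every element of $\mathrm{SL}_2(k)$ is conjugate to its inverse, but checking $\mathrm{SL}_2$- rather than $\mathrm{GL}_2$-conjugacy in the unipotent case again reduces to $-1$ being a square, so the direct computation above seems the most transparent route.
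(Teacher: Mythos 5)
Your proof is correct, and it takes a genuinely different route from the paper's. The paper simply verifies the nine cases directly: it compares Jordan canonical forms and falls back on the explicit list of all $120$ elements in the appendix (precisely because Jordan form only controls $\mathrm{GL}_2$-conjugacy, the paper adds the caveat that only conjugation by $g\in \mathrm{SL}_2(\F_5)$ is allowed). Your argument replaces this brute-force check with structure: trace-invariance of inversion disposes of $[\alpha],[\beta],[\beta']$ (each alone in its trace fiber), centrality of $-I$ reduces the trace $-2$ classes to the trace $2$ ones, and the classification of unipotent classes by the square class of the off-diagonal entry reduces everything to the single arithmetic fact that $-1=2^2$ is a square in $\F_5^\times$. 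This is more illuminating than the paper's proof: it isolates exactly where the $\mathrm{SL}_2$-versus-$\mathrm{GL}_2$ subtlety lives (the unipotent classes) and shows why the lemma would \emph{fail} over $\F_q$ with $q\equiv 3 \pmod 4$, where inversion swaps $[\gamma]$ and $[\gamma']$. One small caution about your closing aside: the statement that ``every element of $\mathrm{SL}_2(k)$ is conjugate to its inverse'' is false as a general fact about $\mathrm{SL}_2(k)$-conjugacy (e.g.\ it fails in $\mathrm{SL}_2(\F_3)$ for exactly the unipotent reason you identify); what is true in general is $\mathrm{GL}_2(k)$-conjugacy, via $A^{-1}=wA^{\mathsf{T}}w^{-1}$ with $w=\left(\begin{smallmatrix}0&1\\-1&0\end{smallmatrix}\right)$. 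Since you do not use that aside and your main argument stands on the explicit computation, this does not affect the validity of your proof.
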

\begin{proof}
It suffices to see that the inverse of each element $x$ in the above list of 9 elements is conjugate to $x$, which can be checked by comparing the Jordan canonical forms, or from the list in Appendix~\ref{s:elements} (obtained by using the Jordan canonical forms). Note that we only allow the conjugation $gxg^{-1}$ by $g\in \mathrm{SL}_2(\F_5)$.
\end{proof}

\subsection{Representation of $\mathrm{SL}_2(\F_5)$}

There are 9 distinct irreducible represenations $A_i$ ($i=1,2,\ldots,9$) of the group $\pi$ whose character is given as in Table~\ref{tab:ch}, and any irreducible representation of $\pi$ over $\C$ is isomorphic to one of them. Irreducible representations of $\pi\times \pi$ are given by the external tensor product $A_i\boxtimes A_j$ (e.g., \cite[Exercise~2.36]{FH}). Since the values of the characters are real for $\pi=\mathrm{SL}_2(\F_5)$, we have
\begin{equation}\label{eq:C_pi} \C[\pi]\cong \bigoplus_{i=1}^9\mathrm{End}(A_i)\cong \bigoplus_{i=1}^9 (A_i\boxtimes A_i),\quad 
\mathrm{Ker}\,\ve\cong \bigoplus_{i=2}^9 (A_i\boxtimes A_i)
\end{equation}
as $\pi\times \pi$-modules\footnote{If the characters are not real, we have $\mathrm{End}(A_i)\cong A_i\boxtimes A_i^*$.}, where the $\pi\times\pi$-invariant $A_1\boxtimes A_1$ in $\C[\pi]$ corresponds to the subspace spanned by the element $\sum_{g\in \pi}g\in\C[\pi]$, and $\mathrm{Ker}\,\ve$ is a $\pi\times\pi$-submodule of $\C[\pi]$.

\par\medskip
\begin{table}
\centering
  \begin{tabular}{|c|ccccccccc|}  \hline
    $\hat{\pi}$ & $I$ & $-I$ & $\alpha$ & $\beta$ & $\beta'$ & $\gamma$ & $\gamma'$ & $-\gamma$ & $-\gamma'$ \\ 
    size & 1 & 1 & 30 & 20 & 20 & 12 & 12 & 12 & 12 \\ \hline
    $A_1$ & 1 & 1 & 1 & 1 & 1 & 1 & 1 & 1 & 1 \\ 
    $A_2$ & 2 & $-2$ & 0 & $-1$ & 1 & $-\phi^*$ & $-\phi$ & $\phi^*$ & $\phi$ \\ 
    $A_3$ & 2 & $-2$ & 0 & $-1$ & 1 & $-\phi$ & $-\phi^*$ & $\phi$ & $\phi^*$ \\ 
    $A_4$ & 3 & 3 & $-1$ & 0 & 0 & $\phi$ & $\phi^*$ & $\phi$ & $\phi^*$ \\ 
    $A_5$ & 3 & 3 & $-1$ & 0 & 0 & $\phi^*$ & $\phi$ & $\phi^*$ & $\phi$ \\ 
    $A_6$ & 4 & 4 & 0 & 1 & 1 & $-1$ & $-1$ & $-1$ & $-1$ \\ 
    $A_7$ & 4 & $-4$ & 0 & 1 & $-1$ & $-1$ & $-1$ & 1 & 1 \\ 
    $A_8$ & 5 & 5 & 1 & $-1$ & $-1$ & 0 & 0 & 0 & 0 \\ 
    $A_9$ & 6 & $-6$ & 0 & 0 & 0 & 1 & 1 & $-1$ & $-1$ \\ \hline
  \end{tabular}
\par\medskip
\caption{The characters $\rho_{A_i}(g)$ for $\mathrm{SL}_2(\F_5)$. $\phi=\cfrac{1+\sqrt{5}}{2}$, $\phi^*=\cfrac{1-\sqrt{5}}{2}$. The values of the characters are taken from \cite{FH,CP}. The order of the rows and columns followed the one in \cite{Bo}, although the original table in \cite{Bo} includes few typos in the row of $A_7$ (signs of the last four entries).}\label{tab:ch}
\end{table}

\subsection{Computation of the character}

To get the dimension of $\calA_\Theta^{\odd/\even}(\C[\pi])$, we compute the dimensions of the invariants $(\tbigwedge^3 W)^{(\pi\times \pi)\rtimes \Z_2}$ and $(\Sym^3 W)^{(\pi\times \pi)\rtimes \Z_2}$ for $W=\bigoplus_{i=1}^9 (A_i\boxtimes A_i)$. Here, recall that the semidirect product structure on $(\pi\times \pi)\rtimes \Z_2$ is given by the homomorphism $\psi\colon\Z_2=\{1,\tau\}\to \mathrm{Aut}(\pi\times \pi)$; $\tau\mapsto ((x,y)\mapsto (y,x))$. This is suitable since we have
\[ \begin{split}
  &\tau\cdot(x,y)\cdot \tau^{-1}(g\wedge h\wedge k)=\tau\cdot(xg^{-1}y^{-1}\wedge xh^{-1}y^{-1}\wedge xk^{-1}y^{-1})\\
  &=ygx^{-1}\wedge yhx^{-1}\wedge ykx^{-1}=(y,x)(g\wedge h\wedge k),
\end{split} \]
which shows that the given actions of $\pi\times\pi$ and $\Z_2$ on $\tbigwedge^3\C[\pi]$ agrees with that of the semidirect product, and similarly for $\Sym^3\,\C[\pi]$.

\begin{Lem}
\label{lem:ohta}
For $\pi=\mathrm{SL}_2(\F_5)$, we have the following.
\begin{enumerate}
\item $\dim\,(\tbigwedge^3 \C[\pi])^{(\pi\times \pi)\rtimes \Z_2}=27$.
\item $\dim\,(\Sym^3\, \C[\pi])^{(\pi\times \pi)\rtimes \Z_2}=65$.
\end{enumerate}
\end{Lem}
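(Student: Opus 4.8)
The plan is to reduce the computation of each invariant dimension to a character-theoretic inner product and then evaluate it using Table~\ref{tab:ch}. Writing $G=(\pi\times\pi)\rtimes\Z_2$ and $W=\bigoplus_{i=1}^9 (A_i\boxtimes A_i)$, the dimension of the invariant subspace $(V)^G$ of any $G$-representation $V$ equals the multiplicity of the trivial representation, namely $\dim V^G=\langle \chi_V,1\rangle_G=\frac{1}{|G|}\sum_{g\in G}\chi_V(g)$. So the first step is to express $\dim(\tbigwedge^3 W)^G$ and $\dim(\Sym^3 W)^G$ via the characters $\chi_{\tbigwedge^3 W}$ and $\chi_{\Sym^3 W}$ of the exterior and symmetric cubes of $W$, averaged over $G$.

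The key reduction is that the characters of $\tbigwedge^3 W$ and $\Sym^3 W$ can be written in terms of the power sums of $\chi_W$ via the standard plethysm formulae. Concretely, for an element $g\in G$ with eigenvalues summing to $\chi_W(g)$, one has
\[
\chi_{\Sym^3 W}(g)=\tfrac{1}{6}\bigl(\chi_W(g)^3+3\chi_W(g)\chi_W(g^2)+2\chi_W(g^3)\bigr),
\]
\[
\chi_{\tbigwedge^3 W}(g)=\tfrac{1}{6}\bigl(\chi_W(g)^3-3\chi_W(g)\chi_W(g^2)+2\chi_W(g^3)\bigr).
\]
Thus I only need the three ``traces'' $\chi_W(g)$, $\chi_W(g^2)$, $\chi_W(g^3)$ for each $g\in G$. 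Since $G$ is the wreath-type extension, its elements split into those lying in $\pi\times\pi$ and those in the nontrivial coset $(\pi\times\pi)\tau$. On $\pi\times\pi$ one has $\chi_W(x,y)=\sum_i \chi_{A_i}(x)\chi_{A_i}(y)$, which by the second orthogonality relation collapses dramatically (it is supported on elements where $x,y$ are conjugate). On the $\tau$-coset, a standard computation for induced/wreath characters shows that $\chi_W((x,y)\tau)$ depends on the product-type data and that its square relates $g^2$ back into $\pi\times\pi$; here one uses that $(\,(x,y)\tau)^2=(xy,yx)$, which is why only $\chi_W$ evaluated at elements of $\pi\times\pi$ is ultimately needed, together with the class sizes from Table~\ref{tab:ch}.

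I would organize the averaging as a sum over conjugacy-class data of $\pi$, using the class sizes $1,1,30,20,20,12,12,12,12$ and the reality of all characters (Lemma~\ref{lem:involution-inv}) so that $A_i^*\cong A_i$ and every $\chi_{A_i}(g^{-1})=\chi_{A_i}(g)$. The reality also guarantees $\chi_W(g^2)$ and $\chi_W(g^3)$ are computed by reading off the class of $g^2,g^3$ in $\hat\pi$, which one determines from the explicit representatives; this is bookkeeping that the element list in Appendix~\ref{s:elements} settles. The main obstacle, and the step most prone to error, will be the contribution of the $\tau$-coset: correctly identifying the character of the $\Z_2$-twist $x\otimes y\otimes z\mapsto x^{-1}\otimes y^{-1}\otimes z^{-1}$ on $\tbigwedge^3 W$ versus $\Sym^3 W$, and assembling $(\,(x,y)\tau)^2=(xy,yx)$ into the power-sum formulae with the right signs. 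Once these coset traces are tabulated against the nine classes, the final answer is a finite rational sum that I would evaluate to obtain $27$ and $65$ respectively; the two computations differ only in the sign of the middle term of the plethysm, so they can be carried out in parallel and cross-checked against the total $\dim(\C[\pi]^{\otimes 3})^{\text{relations}}$ as a consistency test.
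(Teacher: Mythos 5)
Your proposal follows essentially the same route as the paper's proof: the invariant-dimension formula $\dim V^G=\frac{1}{|G|}\sum_{g}\chi_V(g)$, the same plethysm expressions for $\chi_{\tbigwedge^3 W}$ and $\chi_{\Sym^3 W}$, the split of $(\pi\times\pi)\rtimes\Z_2$ into $\pi\times\pi$ and the $\tau$-coset, the flip-trace identity $\chi_{A_i\boxtimes A_i}((x,y)\tau)=\chi_{A_i}(yx)$ together with $((x,y)\tau)^2=(xy,yx)$, and a final numerical evaluation from the character table and class data (the paper gets $33,71$ on $\pi\times\pi$ and $21,59$ on the coset, averaging to $27$ and $65$). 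The only cosmetic difference is your use of column orthogonality to simplify $\chi_W(x,y)$, which the paper bypasses by direct substitution (delegated to a Maxima computation).
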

\begin{proof}
Recall that the dimension $m$ of the invariant part in a representation $V$ of a finite group $G$ can be given by the following formula (\cite[(2.9)]{FH}):
\[ m=\frac{1}{|G|}\sum_{g\in G}\mathrm{Trace}(g|_V)=\frac{1}{|G|}\sum_{g\in G}\chi_V(g). \]
We apply this formula for $G=(\pi\times \pi)\rtimes \Z_2$ with the formulas: $\chi_{V\oplus V'}(g)=\chi_V(g)+\chi_{V'}(g)$, $\chi_{V\boxtimes V'}(g,h)=\chi_{V}(g)\chi_{V'}(h)$, and
\[ \begin{split}
 &\chi_{\bigwedge^3(V)}(g)=\cfrac{1}{6}( \chi_V(g)^3-3 \chi_V(g^2) \chi_V(g)+2 \chi_V(g^3)),\\
  &\chi_{\mathrm{Sym}^3(V)}(g)=\cfrac{1}{6}\left( \chi_V(g)^3+3 \chi_V(g^2)  \chi_V(g) +2 \chi_V(g^3)\right),\\
\end{split}\]
with the character table (Table~\ref{tab:ch}).
First, for $W=\C[\pi]$ we compute
\[ \begin{split}
  \dim\,(\tbigwedge^3 W)^{\pi\times \pi}
  &=\displaystyle\frac{1}{6|\pi|^2}\sum_{g,h\in\pi}(\chi_W(g,h)^3-3 \chi_W(g^2,h^2) \chi_W(g,h)+2 \chi_W(g^3,h^3)),\\
  \dim\,(\Sym^3 W)^{\pi\times \pi}
  &=\displaystyle\frac{1}{6|\pi|^2}\sum_{g,h\in\pi}(\chi_W(g,h)^3+3 \chi_W(g^2,h^2) \chi_W(g,h)+2 \chi_W(g^3,h^3)),
\end{split} \]
where $\chi_W(g,h)=\sum_{i=1}^9\chi_{A_i}(g)\chi_{A_i}(h)$, $\chi_W(g^2,h^2)=\sum_{i=1}^9\chi_{A_i}(g^2)\chi_{A_i}(h^2)$, and so on. Substituting the values of the characters of Table~\ref{tab:ch} into these formulas, we get 
\[ \begin{split}
&\dim\,(\tbigwedge^3 \C[\pi])^{\pi\times \pi}=33,\quad \dim\,(\Sym^3\, \C[\pi])^{\pi\times\pi}=71.
\end{split} \]
The detail of these computations by Maxima can be found in \cite{Oh}, in which Table~\ref{tab:conj} is used to compute the characters of $g^2$ and $g^3$. 
\begin{table}
\begin{center}
\begin{tabular}{|c||c|c|c|c|c|c|c|c|c|}\hline
$g$ & $I$ & $-I$ & $\alpha$ & $\beta$ & $\beta'$ & $\gamma$ & $\gamma'$ & $-\gamma$ & $-\gamma'$\\\hline
$g^2$ & $I$ & $I$ & $-I$ & $\beta$ & $\beta$ & $\gamma'$ & $\gamma$ & $\gamma'$ & $\gamma$\\ \hline
$g^3$ & $I$ & $-I$ & $\alpha$ & $I$ & $-I$ & $\gamma'$ & $\gamma$ & $-\gamma'$ & $-\gamma$\\ \hline
\end{tabular}
\end{center}
\caption{The conjugacy classes of $g^2$ and $g^3$ in $\mathrm{SL}_2(\F_5)$. See also Appendix~\ref{s:elements}.}\label{tab:conj}
\end{table}

We need also to consider terms for the elements $\tau\cdot(g,h)$ given by the following formulas.
\begin{equation}\label{eq:term_tau}
 \begin{split}
  &\displaystyle\frac{1}{6|\pi|^2}\sum_{g,h\in\pi}(\chi_W(\tau\cdot(g,h))^3-3 \chi_W((\tau\cdot (g,h))^2) \chi_W(\tau\cdot(g,h))+2 \chi_W((\tau\cdot(g,h))^3),\\
  &\displaystyle\frac{1}{6|\pi|^2}\sum_{g,h\in\pi}(\chi_W(\tau\cdot(g,h))^3+3 \chi_W((\tau\cdot (g,h))^2) \chi_W(\tau\cdot(g,h))+2 \chi_W((\tau\cdot(g,h))^3).
\end{split}
\end{equation}
We simplify the computation as follows.
\[ \begin{split}
&\sum_{g,h}\chi_W(\tau\cdot(g,h))^3=\sum_{g,h}\Bigl(\sum_i\chi_{A_i\boxtimes A_i}(\tau\cdot(g,h))\Bigr)^3
  =\sum_{g,h}\Bigl(\sum_i\chi_{A_i}(hg)\Bigr)^3\\
&=\sum_h\sum_g\Bigl(\sum_i\chi_{A_i}(hg)\Bigr)^3=\sum_h\sum_g\Bigl(\sum_i\chi_{A_i}(g)\Bigr)^3=|\pi|\sum_g\Bigl(\sum_i\chi_{A_i}(g)\Bigr)^3,\\
&\sum_{g,h}\chi_W((\tau\cdot(g,h))^2)\chi_W(\tau\cdot(g,h))
=\sum_{g,h}\Bigl(\sum_i\chi_{A_i\boxtimes A_i}(hg,gh)\Bigr)\Bigl(\sum_j\chi_{A_j}(hg)\Bigr)\\
&=\sum_{g,h}\Bigl(\sum_i\chi_{A_i}(hg)\chi_{A_i}(gh)\Bigr)\Bigl(\sum_j\chi_{A_j}(hg)\Bigr)\\
&=\sum_{g,h}\Bigl(\sum_i\chi_{A_i}(hg)^2\Bigr)\Bigl(\sum_j\chi_{A_j}(hg)\Bigr)
=\sum_h\sum_g\Bigl(\sum_i\chi_{A_i}(g)^2\Bigr)\Bigl(\sum_j\chi_{A_j}(g)\Bigr)\\
&=|\pi|\sum_g\Bigl(\sum_i\chi_{A_i}(g)^2\Bigr)\Bigl(\sum_j\chi_{A_j}(g)\Bigr),\\
&\sum_{g,h}\chi_W((\tau\cdot(g,h))^3)=\sum_{g,h}\sum_i\chi_{A_i\boxtimes A_i}((\tau\cdot(g,h))^3)=\sum_{g,h}\sum_i\chi_{A_i}(hghghg)\\
&=|\pi|\sum_g\sum_i\chi_{A_i}(g^3).
\end{split} \]
Here, we have the identity $\chi_{A_i\boxtimes A_i}(\tau\cdot(g,h))=\chi_{A_i}(hg)$ since $\tau$ acts on $A_i\boxtimes A_i$ by the flip $x\boxtimes y\mapsto y\boxtimes x$. 
Hence (\ref{eq:term_tau}) can be computed respectively by the following formulas.
\[ \begin{split}
&\frac{1}{6|\pi|}\sum_g\Bigl\{\Bigl(
\sum_i\chi_{A_i}(g)\Bigr)^3-3\Bigl(\sum_i\chi_{A_i}(g)^2\Bigr)\Bigl(\sum_j\chi_{A_j}(g)\Bigr)+2\sum_i\chi_{A_i}(g^3)\Bigr\},\\
&\frac{1}{6|\pi|}\sum_g\Bigl\{\Bigl(
\sum_i\chi_{A_i}(g)\Bigr)^3+3\Bigl(\sum_i\chi_{A_i}(g)^2\Bigr)\Bigl(\sum_j\chi_{A_j}(g)\Bigr)+2\sum_i\chi_{A_i}(g^3)\Bigr\}.\\
\end{split} \]
Substituting the values of the characters in Table~\ref{tab:ch} into these formulas, we get the values
$21$ and $59$, respectively. By taking the averages $(33+21)/2=27$, $(71+59)/2=65$, we get the result.
\end{proof}

For $W=\mathrm{Ker}\,\ve$, the following proposition holds.
\begin{Prop}\label{prop:A_CG_Ker_e}
For any finite group $G$, we have the following.
\begin{enumerate}
\item $\calA_\Theta^\even(\C[G])=\calA_\Theta^\even(\mathrm{Ker}\,\ve)$.
\item $\calA_\Theta^\odd(\C[G])\cong\calA_\Theta^\odd(\mathrm{Ker}\,\ve)\oplus (\C\hat{G})_{\Z_2}$ as vector spaces over $\C$, where $\hat{G}$ is the set of conjugacy classes in $G$, and the $\Z_2$-action on it is the one induced by the inversion.
\end{enumerate}
\end{Prop}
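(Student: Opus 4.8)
The plan is to exploit the splitting of the group algebra. As a module over $G':=(G\times G)\rtimes\Z_2$ one has $\C[G]=L\oplus K$ with $K=\mathrm{Ker}\,\ve$, where $L=\C\langle\sum_{g\in G}g\rangle$ is the trivial one-dimensional submodule: the element $\sum_g g$ is fixed both by the two-sided action $(a,b)\cdot x=axb^{-1}$ and by the involution $\tau\colon x\mapsto x^{-1}$, while $K$ is $G'$-stable since $\ve$ is unchanged under these operations. I would feed this splitting into the natural $G'$-equivariant decompositions $\tbigwedge^3(L\oplus K)\cong\tbigwedge^3 K\oplus(L\otimes\tbigwedge^2 K)$ and $\Sym^3(L\oplus K)\cong\Sym^3 K\oplus(L\otimes\Sym^2 K)\oplus(L^{\otimes2}\otimes K)\oplus L^{\otimes3}$, using $\tbigwedge^{\ge2}L=0$ and $\dim L=1$. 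Since $L$ is trivial and $(-)^{G'}$ is additive, these become
\[ \calA_\Theta^\even(\C[G])=\calA_\Theta^\even(K)\oplus(\tbigwedge^2 K)^{G'},\qquad \calA_\Theta^\odd(\C[G])=\calA_\Theta^\odd(K)\oplus(\Sym^2 K)^{G'}\oplus K^{G'}\oplus\C. \]

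Next I would record that $K^{G'}=0$: the two-sided action of $G\times G$ on $G$ is transitive, so $(\C[G])^{G\times G}=L$, whence $K^{G\times G}=0$ and a fortiori $K^{G'}=0$. This kills the $K^{G'}$ term in the odd formula, so statement (1) is equivalent to $(\tbigwedge^2 K)^{G'}=0$ and statement (2) is equivalent to $(\Sym^2 K)^{G'}\oplus\C\cong(\C\hat G)_{\Z_2}$. Passing the splitting once more through the degree-two decompositions and again using $K^{G'}=0$ gives $(\tbigwedge^2\C[G])^{G'}=(\tbigwedge^2 K)^{G'}$ and $(\Sym^2\C[G])^{G'}=\C\oplus(\Sym^2 K)^{G'}$, so it suffices to split and compute the two summands of $(\C[G]^{\otimes2})^{G'}=(\Sym^2\C[G])^{G'}\oplus(\tbigwedge^2\C[G])^{G'}$ (the transposition $\iota$ commutes with both the diagonal action and $\tau$, so this decomposition is $G'$-equivariant).

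The heart is the analysis of $V:=(\C[G]^{\otimes2})^{G\times G}$ with its two commuting involutions: the transposition $\iota\colon x\otimes y\mapsto y\otimes x$ (whose $\pm1$-eigenspaces cut out $\Sym^2$ and $\tbigwedge^2$) and the module involution $\tau$. A straightforward orbit count shows the orbits of the diagonal two-sided action on pairs $(g,h)$ are classified by the conjugacy class $[g^{-1}h]$, so $V$ has the orbit-sum basis $\{e_{[k]}\}_{[k]\in\hat G}$ with $e_{[k]}=\sum_{[g^{-1}h]=[k]}g\otimes h$, and $\dim V=|\hat G|$. I would then check that on $V$ both $\iota$ and $\tau$ act by the \emph{same} permutation $S\colon e_{[k]}\mapsto e_{[k^{-1}]}$ induced by inversion. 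Granting this: an element of $(\tbigwedge^2\C[G])^{G'}$ satisfies $\iota=-1$ and $\tau=+1$, i.e. $Sv=-v$ and $Sv=+v$, hence $v=0$; an element of $(\Sym^2\C[G])^{G'}$ satisfies $\iota=\tau=+1$, i.e. lies in the $+1$-eigenspace $V^{S}=(\C\hat G)^{\Z_2}\cong(\C\hat G)_{\Z_2}$ (invariants and coinvariants of a $\Z_2$-action agree in characteristic zero). Combining with the reductions above, $(\tbigwedge^2 K)^{G'}=0$ gives (1) and $(\Sym^2 K)^{G'}\oplus\C\cong(\C\hat G)_{\Z_2}$ gives (2).

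The main obstacle — indeed the only nonformal point — is the coincidence $\tau=\iota=S$ on $V$. I expect its verification to come down to the elementary conjugacy identities $[h^{-1}g]=[k^{-1}]$ and $[gh^{-1}]=[k^{-1}]$ whenever $[g^{-1}h]=[k]$, which hold because $h^{-1}g$, $gh^{-1}$ and $(g^{-1}h)^{-1}$ are mutually conjugate. It is precisely this coincidence that forces $\tbigwedge^2$ to contribute nothing and hands the entire degree-two invariant space to $\Sym^2$, which is what makes the even invariants agree for $\C[G]$ and $\mathrm{Ker}\,\ve$ while the odd ones differ by exactly $(\C\hat G)_{\Z_2}$. (As a consistency check, for $G=\mathrm{SL}_2(\F_5)$ inversion fixes all $9$ classes, matching the even agreement $27=27$ and the odd gap $65-56=9$ in Proposition~\ref{prop:ex1}.)
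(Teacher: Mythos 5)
Your proof is correct and follows essentially the same route as the paper: split off the trivial summand $\C[G]=L\oplus\mathrm{Ker}\,\ve$, reduce both statements to the degree-two invariants, and observe that on the $G\times G$-invariant part of $\C[G]^{\otimes 2}$ the inversion involution coincides with the transposition, which kills the $\tbigwedge^2$ contribution and identifies the $\Sym^2$ contribution with $(\C\hat{G})_{\Z_2}$. The only cosmetic difference is that the paper works with coinvariants (via $V_H\cong (V_K)_{H/K}$) and explicit relations such as $[g\wedge h]=-[g^{-1}\wedge h^{-1}]$ and $\rho(g\cdot h)=[gh^{-1}]$, whereas you work with invariants and the orbit basis $e_{[k]}$ of $(\C[G]^{\otimes 2})^{G\times G}$ --- the same computation in different clothing.
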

\begin{proof}
Instead of the $(G\times G)\rtimes \Z_2$-invariant part, we consider the submodule of coinvariants, and apply the formula $V_H\cong (V_K)_{H/K}$ (e.g., \cite[Ch.II-2 (Exercises 3)]{Br}) for a subgroup $K$ of $H$ and an $H$-module $V$ to $K=G\times G$, $H=(G\times G)\rtimes \Z_2$. 

For (1), let $U$ be a trivial 1-dimensional $G\times G$-module and let $W$ be a $G\times G$-module. The formula $\tbigwedge^n(U\oplus W)=\bigoplus_{p+q=n}\tbigwedge^p U\otimes \tbigwedge^q W$ and $\tbigwedge^2 U=0$ gives
\[ \begin{split}
  \tbigwedge^3(U\oplus W)&=\tbigwedge^3U\oplus \tbigwedge^3W\oplus ((\tbigwedge^2 U)\otimes W)\oplus (U\otimes\tbigwedge^2W)\\
  &\cong\tbigwedge^3W\oplus \tbigwedge^2W.
\end{split} \]
If $W=\mathrm{Ker}\,\ve$ and if $\Z_2$ acts trivially on $U$, then we have $U\oplus W\cong\C[G]$ as both $G\times G$-modules and $\Z_2$-modules, and $((\tbigwedge^2 W)_{G\times G})^{\Z_2}=0$. Indeed, we have $((\tbigwedge^2 \C[G])_{G\times G})^{\Z_2}=0$, since in $(\tbigwedge^2 \C[G])_{G\times G}$, we have $[g\wedge h]=[gh^{-1}\wedge 1]=-[1\wedge gh^{-1}]=-[g^{-1}\wedge h^{-1}]$ ($g,h\in G$) and the $\Z_2$-invariant is generated by $[g\wedge h]+[g^{-1}\wedge h^{-1}]=0$. Hence we have
\[ ((\tbigwedge^3(U\oplus \mathrm{Ker}\,\ve))_{G\times G})^{\Z_2}\cong ((\tbigwedge^3\mathrm{Ker}\,\ve)_{G\times G})^{\Z_2}. \]

For (2), we use the formula $\Sym^n(U\oplus W)=\bigoplus_{p+q=n}\Sym^p U\otimes \Sym^q W$ to obtain
\[ \begin{split}
  \Sym^3(U\oplus W)&=\Sym^3U\oplus \Sym^3W\oplus ((\Sym^2 U)\otimes W)\oplus (U\otimes\Sym^2W)\\
  &\cong U\oplus \Sym^3W\oplus W\oplus \Sym^2W\cong \Sym^3W\oplus\Sym^2(U\oplus W).
\end{split} \]
Considering the case when $W=\mathrm{Ker}\,\ve$, it suffices to prove $((\Sym^2\C[G])_{G\times G})^{\Z_2}\cong (\C\hat{G})_{\Z_2}$. Let $\rho\colon\Sym^2\C[G]\to (\C\hat{G})_{\Z_2}$ be the $\C$-linear map defined by $\rho(g\cdot h)=[gh^{-1}]$ ($g,h\in G$), which is well-defined since $\rho(h\cdot g)=[hg^{-1}]=[gh^{-1}]=\rho(g\cdot h)$. 
One can check that this map induces a well-defined $\C$-linear isomorphism 
\[ \bar\rho\colon((\Sym^2\C[G])_{G\times G})^{\Z_2}\to (\C\hat{G})_{\Z_2} \]
with the inverse given by $\bar{\rho}^{-1}([x])=\frac{1}{2}([1\cdot x^{-1}]+[1\cdot x])$, which is well-defined. Indeed, $\bar{\rho}(gx\cdot hx)=[gxx^{-1}h^{-1}]=\bar{\rho}(g\cdot h)$, $\bar{\rho}(xg\cdot xh)=[xgh^{-1}x^{-1}]=[gh^{-1}]=\bar{\rho}(g\cdot h)$, $\bar{\rho}(g\cdot h)=\bar{\rho}(gh^{-1}\cdot 1)=\bar{\rho}(1\cdot gh^{-1})=\bar{\rho}(g^{-1}\cdot h^{-1})$ etc.
\end{proof}

\begin{Prop}\label{prop:ex1}
When $\pi=\mathrm{SL}_2(\F_5)$, we have the following.
\begin{enumerate}
\item $\dim \calA_\Theta^\even(\mathrm{Ker}\,\ve)=\dim \calA_\Theta^\even(\C[\pi])=27$.
\item $\dim \calA_\Theta^\odd(\mathrm{Ker}\,\ve)=56$, $\dim \calA_\Theta^\odd(\C[\pi])=65$.
\end{enumerate}
\end{Prop}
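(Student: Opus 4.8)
The plan is to assemble Proposition~\ref{prop:ex1} directly from three ingredients already in hand: the character computation of Lemma~\ref{lem:ohta}, the structural comparison of Proposition~\ref{prop:A_CG_Ker_e} (applied with $G=\pi$), and the inversion-invariance of Lemma~\ref{lem:involution-inv}. No new computation should be required; the task is purely to combine these.

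For part (1), I would start from the definition $\calA_\Theta^\even(\C[\pi])=(\tbigwedge^3\C[\pi])^{(\pi\times\pi)\rtimes\Z_2}$, which Lemma~\ref{lem:ohta}~(1) evaluates to $27$. Then Proposition~\ref{prop:A_CG_Ker_e}~(1) gives the identification $\calA_\Theta^\even(\C[\pi])=\calA_\Theta^\even(\mathrm{Ker}\,\ve)$, so both dimensions are $27$ at once.

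For part (2), the definition $\calA_\Theta^\odd(\C[\pi])=(\Sym^3\C[\pi])^{(\pi\times\pi)\rtimes\Z_2}$ together with Lemma~\ref{lem:ohta}~(2) gives $\dim\calA_\Theta^\odd(\C[\pi])=65$. The remaining value follows from the isomorphism
\[ \calA_\Theta^\odd(\C[\pi])\cong\calA_\Theta^\odd(\mathrm{Ker}\,\ve)\oplus(\C\hat{\pi})_{\Z_2} \]
of Proposition~\ref{prop:A_CG_Ker_e}~(2), once $\dim(\C\hat{\pi})_{\Z_2}$ is determined. Here the $\Z_2$-action on $\C\hat{\pi}$ is the one induced by inversion on conjugacy classes, and by Lemma~\ref{lem:involution-inv} every class $[x]\in\hat{\pi}$ satisfies $[x^{-1}]=[x]$. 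Hence the inversion acts trivially on $\hat{\pi}$, so $(\C\hat{\pi})_{\Z_2}=\C\hat{\pi}$ has dimension equal to the number of conjugacy classes, which is $9$. Subtracting yields $\dim\calA_\Theta^\odd(\mathrm{Ker}\,\ve)=65-9=56$.

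The genuinely hard part is entirely upstream, in the character-theoretic calculation of Lemma~\ref{lem:ohta} (the averaging over $(\pi\times\pi)\rtimes\Z_2$, including the $\tau$-twisted terms). Granting that, this proposition is bookkeeping. The one point deserving explicit mention is that Lemma~\ref{lem:involution-inv} forces the inversion action on $\hat{\pi}$ to be trivial, so that the coinvariant summand $(\C\hat{\pi})_{\Z_2}$ contributes its full dimension $9$; if some conjugacy class were \emph{not} closed under inversion, this term would be smaller and the value $56$ would change accordingly. Thus the role of Lemma~\ref{lem:involution-inv} is precisely to pin down this correction term.
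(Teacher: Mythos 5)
Your proposal is correct and matches the paper's own proof exactly: it combines Lemma~\ref{lem:ohta}, Proposition~\ref{prop:A_CG_Ker_e}, and Lemma~\ref{lem:involution-inv} in the same way, with the latter pinning down $\dim(\C\hat{\pi})_{\Z_2}=9$ so that $65-9=56$. Nothing is missing; your explicit remark on why the triviality of the inversion action matters is just a fuller spelling-out of what the paper states in one line.
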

\begin{proof}
This follows from Lemma~\ref{lem:ohta} and Proposition~\ref{prop:A_CG_Ker_e}.
Note that by Lemma~\ref{lem:involution-inv} the action of $\Z_2$ on $\C\hat{\pi}$ is trivial, and hence $\dim(\C\hat{\pi})_{\Z_2}=\dim \C\hat{\pi}=9$.
\end{proof}

\mysection{Example 2: Lens spaces}{s:cyclic}

If $\pi$ is the cyclic group $\Z_n=\{1,t,t^2,\ldots,t^{n-1}\}$, the exact values of the dimensions of the spaces $\calA_\Theta^{\even/\odd}(\C[\pi])$ and $\calA_\Theta^{\even/\odd}(\mathrm{Ker}\,\ve)$ can be determined with the help of the $\C$-linear maps (``weight system'')
\[ \begin{split}
 W^\even\colon\calA_\Theta^\even(\C[\pi])\to \tbigwedge^3 \C[\pi],\qquad
 W^\odd\colon\calA_\Theta^\odd(\C[\pi])\to \Sym^3\,\C[\pi]
\end{split} \]
defined respectively by
\[ \begin{split}
  &W^{\even}(t^a\wedge t^b\wedge t^c)=t^{b-a}\wedge t^{c-b}\wedge t^{a-c} + t^{a-b}\wedge t^{b-c}\wedge t^{c-a},\\
  &W^{\odd}(t^a\cdot t^b\cdot t^c)=t^{b-a}\cdot t^{c-b}\cdot t^{a-c} + t^{a-b}\cdot t^{b-c}\cdot t^{c-a},
\end{split} \]
instead of calculating the characters.
\begin{Lem}\label{lem:w_well_defined}
For $\pi=\Z_n$, the maps $W^{\even/\odd}$ are well-defined.
\end{Lem}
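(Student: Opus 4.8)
The plan is to exhibit each of $W^{\even}$ and $W^{\odd}$ as induced by a trilinear map on $\C[\pi]^{\times 3}$ which is, respectively, alternating and symmetric, so that it factors through $\tbigwedge^3\C[\pi]$ and $\Sym^3\,\C[\pi]$. First I would note that every exponent on the right-hand sides is one of the differences $b-a,\ c-b,\ a-c$ or its negative, all computed in $\Z_n$; hence $t^{b-a}$, etc.\ depend only on the residues modulo $n$, i.e.\ only on the group elements $t^a,t^b,t^c\in\pi$, so the formulas do not depend on the chosen integer representatives. I would then define trilinear maps $\widetilde W^{\even}\colon\C[\pi]^{\times 3}\to\tbigwedge^3\C[\pi]$ and $\widetilde W^{\odd}\colon\C[\pi]^{\times 3}\to\Sym^3\,\C[\pi]$ by the stated formulas on basis triples $(t^a,t^b,t^c)$ and trilinear extension. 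Since a linear map out of $\tbigwedge^3\C[\pi]$ (resp.\ $\Sym^3\,\C[\pi]$) is exactly the datum of an alternating (resp.\ symmetric) trilinear map, the problem reduces to verifying the $S_3$-behaviour of $\widetilde W^{\even/\odd}$ on basis triples.

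It is convenient to put $u=b-a$, $v=c-b$, $w=a-c$, so that $u+v+w=0$ in $\Z_n$ and
\[ \widetilde W^{\even}(t^a,t^b,t^c)=t^{u}\wedge t^{v}\wedge t^{w}+t^{-u}\wedge t^{-v}\wedge t^{-w}, \]
with the analogous identity for $\widetilde W^{\odd}$ using symmetric products. Because transpositions generate $S_3$ and signs are multiplicative, it suffices to check that each adjacent transposition of $(a,b,c)$ negates $\widetilde W^{\even}$ and fixes $\widetilde W^{\odd}$. For the swap $a\leftrightarrow b$ the triple $(u,v,w)$ becomes $(-u,-w,-v)$, and after reordering the three wedge (resp.\ symmetric) factors one finds the expression is negated (resp.\ unchanged); the other two transpositions are entirely analogous. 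In the alternating case this also forces vanishing on degenerate triples: e.g.\ $a=b$ gives $1\wedge t^{c-a}\wedge t^{a-c}+1\wedge t^{a-c}\wedge t^{c-a}=0$, so no relation of $\tbigwedge^3\C[\pi]$ beyond antisymmetry can be violated.

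With these $(anti)$symmetry identities established, $\widetilde W^{\even}$ factors through $\tbigwedge^3\C[\pi]$ and $\widetilde W^{\odd}$ through $\Sym^3\,\C[\pi]$, yielding well-defined $\C$-linear maps whose restrictions to the invariant subspaces $\calA_\Theta^{\even}(\C[\pi])$ and $\calA_\Theta^{\odd}(\C[\pi])$ are the desired $W^{\even}$ and $W^{\odd}$. I expect the only genuine work to be the bookkeeping in the transposition step: one must track the permutation induced on the three difference-exponents together with the sign accrued from reordering the wedge or symmetric factors, and confirm that these two contributions combine to the required overall sign. This is elementary, but it is precisely where a sign error would break the claim, so I would carry it out in full for one transposition and invoke the symmetry of the argument for the remaining cases.
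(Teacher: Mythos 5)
Your reduction to the universal property of $\tbigwedge^3$ and $\Sym^3$ establishes only one of the things ``well-defined'' must mean here, and it omits the substantive part. The domain of $W^{\even/\odd}$ is not $\tbigwedge^3\C[\pi]$ or $\Sym^3\,\C[\pi]$ but the invariant space $\calA_\Theta^{\even/\odd}(\C[\pi])=(\tbigwedge^3\C[\pi])^{(\pi\times\pi)\rtimes\Z_2}$ (resp.\ the symmetric analogue), whose elements are orbit sums, i.e.\ classes represented by wedges $t^a\wedge t^b\wedge t^c$, and the defining formula is applied to a chosen representative. For the formula to define a map on $\calA_\Theta$ --- equivalently, for the restriction of your globally defined $\widetilde W^{\even/\odd}$ to the invariants to be computed by the stated formula on representatives --- the formula must be constant on $(\pi\times\pi)\rtimes\Z_2$-orbits. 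Concretely, for $\pi=\Z_n$ one must check the two identities
\[ W^{\even}(t^{a+k}\wedge t^{b+k}\wedge t^{c+k})=W^{\even}(t^{a}\wedge t^{b}\wedge t^{c}),\qquad
   W^{\even}(t^{-a}\wedge t^{-b}\wedge t^{-c})=W^{\even}(t^{a}\wedge t^{b}\wedge t^{c}), \]
and likewise for $W^{\odd}$: the first holds because all exponents in the formula are differences, the second because inversion exchanges the two summands (this is the very reason the formula has two summands). Your proposal never verifies, or even mentions, either identity; your closing claim that the restrictions ``are the desired $W^{\even}$ and $W^{\odd}$'' is exactly the assertion that needs this invariance, so it is asserted rather than proved. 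The paper's proof consists of three checks --- alternating/symmetric (which you do, correctly), invariance under $\pi\times\pi$, and invariance under the involution $(g,h,k)\mapsto(g^{-1},h^{-1},k^{-1})$ --- and you have supplied only the first.

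This is not a cosmetic omission. Without orbit-invariance, the restriction of a global linear map to the invariant subspace exists but need not be given by the formula on a representative: if $\xi$ is the orbit sum of $t^a\wedge t^b\wedge t^c$, then $\widetilde W^{\even}(\xi)=\sum_{g}\widetilde W^{\even}(g\cdot(t^a\wedge t^b\wedge t^c))$, which collapses to a multiple of the right-hand side of the defining formula only when $\widetilde W^{\even}$ is constant on the orbit. The subsequent use of the lemma depends on this: Proposition~\ref{prop:ex2} computes $\dim\calA_\Theta^{\even/\odd}$ by evaluating $W^{\even/\odd}$ on single wedges $t^a\wedge t^b\wedge t^c$ and matching the values with partitions of $n$ up to rotation and reflection, which is meaningful only if the value is independent of the chosen representative of the class. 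The missing verifications are short (translation invariance of differences, and the swap of summands under inversion), but they are the actual content of the lemma.
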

\begin{proof}
We need only to check that $W^{\even/\odd}$ is alternating/symmetric and is invariant under both the actions of $\pi\times \pi$ and the involution $(g,h,k)\mapsto (g^{-1},h^{-1},k^{-1})$. For $W^\even$, this can be checked as follows ($t^k\in\pi$):
\[ \begin{split}
  W^\even(t^b\wedge t^a\wedge t^c)&=t^{a-b}\wedge t^{c-a}\wedge t^{b-c}+t^{b-a}\wedge t^{a-c}\wedge t^{c-b}\\
  &=-t^{a-b}\wedge t^{b-c}\wedge t^{c-a}-t^{b-a}\wedge t^{c-b}\wedge t^{a-c}\\
  &=-W^\even(t^a\wedge t^b\wedge t^c)\quad \mbox{ etc. }\\
  W^\even(t^kt^a\wedge t^kt^b\wedge t^kt^c)&=t^{b-a}\wedge t^{c-b}\wedge t^{a-c} + t^{a-b}\wedge t^{b-c}\wedge t^{c-a}\\
  &=W^\even(t^a\wedge t^b\wedge t^c),\\
  W^\even(t^{-a}\wedge t^{-b}\wedge t^{-c})&=t^{-b+a}\wedge t^{-c+b}\wedge t^{-a+c}+t^{-a+b}\wedge t^{-b+c}\wedge t^{-c+a}\\
  &=W^\even(t^a\wedge t^b\wedge t^c).\\
\end{split} \]
The proof for $W^\odd$ is similar.
\end{proof}

Then $W^{\even/\odd}$ is an embedding into a subspace isomorphic to the space spanned by $t^p\wedge t^q\wedge t^r$ or $t^p\cdot t^q\cdot t^r$ $(0\leq p,q,r<n,\,p+q+r=0\mbox{ (mod $n$)}$) quotiented by the relation $t^p\wedge t^q\wedge t^r\sim t^{-p}\wedge t^{-q}\wedge t^{-r}$ or $t^p\cdot t^q\cdot t^r\sim t^{-p}\cdot t^{-q}\cdot t^{-r}$. 
\begin{Prop}\label{prop:ex2}
Let $\pi=\Z_n$ ($n\geq 1$), and for an integer $m\geq 0$, let $p_3(m)$ be the number of partitions of $m$ into at most three parts, namely, the number of integer solutions of the equation $x+y+z=m$ ($0\leq x\leq y\leq z$). We set $p_3(m)=0$ for $m<0$. Then we have the following.
\begin{enumerate}
\item $\dim\calA_\Theta^{\odd}(\C[\pi])=p_3(n)$.
\item $\dim\calA_\Theta^{\even}(\C[\pi])=p_3(n-6)$.
\item $\dim\calA_\Theta^{\odd}(\mathrm{Ker}\,\ve)=p_3(n-3)$.
\item $\dim\calA_\Theta^{\even}(\mathrm{Ker}\,\ve)=\dim\calA_\Theta^{\even}(\C[\pi])=p_3(n-6)$.
\end{enumerate}
\end{Prop}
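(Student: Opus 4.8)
The plan is to compute all four dimensions by reducing the $(\pi\times\pi)\rtimes\Z_2$-action to a dihedral group action and counting orbits; the weight systems $W^{\even/\odd}$ are exactly the device that records this orbit data (they read off the ``gaps'' $b-a,\,c-b,\,a-c$), so the embedding asserted just before the statement identifies each dimension with a number of orbits. First I would exploit that $\pi=\Z_n$ is abelian: since $(g,h)\cdot x=gxh^{-1}=x\,gh^{-1}$ depends only on $gh^{-1}$, the $\pi\times\pi$-action on $\C[\pi]$ factors through translation by $\Z_n$, and together with the inversion $\Z_2$ this is precisely the dihedral group $D_n$ acting on $V:=\C[\Z_n]$ as the permutation representation on the vertices of a regular $n$-gon. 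Hence
\[ \calA_\Theta^\odd(\C[\pi])=(\Sym^3 V)^{D_n},\qquad \calA_\Theta^\even(\C[\pi])=(\tbigwedge^3 V)^{D_n}. \]

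For the symmetric part (1), $\Sym^3 V$ is a permutation representation of $D_n$ on the set of size-$3$ multisets of vertices, so $\dim(\Sym^3 V)^{D_n}$ is the number of $D_n$-orbits of such multisets. I would encode each multiset by its cyclic triple of gaps $(g_1,g_2,g_3)$ with $g_i\ge 0$ and $g_1+g_2+g_3=n$; a rotation cyclically permutes the gaps and a reflection reverses them, so a $D_n$-orbit is a gap triple taken up to the full symmetry of the triangle, i.e. the unordered multiset $\{g_1,g_2,g_3\}$, which is a partition of $n$ into at most three parts. This gives $\dim\calA_\Theta^\odd(\C[\pi])=p_3(n)$.

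The exterior part (2) is the delicate step, because the $D_n$-action on the basis $t^a\wedge t^b\wedge t^c$ carries signs: an orbit of $3$-subsets contributes a nonzero invariant orbit-sum iff its stabilizer acts by even permutations. The hard part will be this sign bookkeeping, which I would settle as follows. A nontrivial rotation stabilizing a $3$-subset has no fixed vertex, hence acts as a $3$-cycle (even); a reflection fixes at most two vertices of the $n$-gon, so on three distinct vertices it fixes exactly one and transposes the other two (odd). Thus the contributing orbits are precisely the ``chiral'' ones, whose gap triple admits no reversal symmetry, which for a length-three sequence means all three gaps are distinct. Counting partitions of $n$ into three distinct positive parts and subtracting $(1,2,3)$ from the parts identifies these with partitions of $n-6$ into at most three parts, yielding $\dim\calA_\Theta^\even(\C[\pi])=p_3(n-6)$.

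Finally, (3) and (4) follow formally. Assertion (4) is immediate from Proposition~\ref{prop:A_CG_Ker_e}(1) together with (2). For (3), Proposition~\ref{prop:A_CG_Ker_e}(2) gives
\[ \dim\calA_\Theta^\odd(\mathrm{Ker}\,\ve)=\dim\calA_\Theta^\odd(\C[\pi])-\dim(\C\hat{\pi})_{\Z_2}; \]
since $\pi=\Z_n$ is abelian, $\hat{\pi}=\Z_n$ with $\Z_2$ acting by inversion, so $\dim(\C\hat{\pi})_{\Z_2}=\lfloor n/2\rfloor+1$, the number of partitions of $n$ into at most two parts. The elementary bijection adding $1$ to each of three parts shows that partitions of $n$ into exactly three positive parts are counted by $p_3(n-3)$, whence $p_3(n)-p_3(n-3)=\lfloor n/2\rfloor+1$, and therefore $\dim\calA_\Theta^\odd(\mathrm{Ker}\,\ve)=p_3(n-3)$. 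Apart from the sign analysis in the exterior case, every step is the dihedral reduction or a standard partition bijection.
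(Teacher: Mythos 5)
Your proof is correct, and for parts (1) and (2) it takes a genuinely different route from the paper's. The paper works through the weight systems $W^{\even/\odd}$ of Lemma~\ref{lem:w_well_defined}: it represents $t^a\cdot t^b\cdot t^c$ by three points on the unit circle, identifies the image of $W^{\odd}$ with the span of partitions of $S^1$ into three arcs up to rotation and reflection (these arc lengths are exactly your gap triples), and in the even case excludes partitions with a zero part or with two equal parts because the corresponding wedges vanish; this yields $p_3(n)$ and $p_3(n-6)$. You instead note that for abelian $\pi$ the $(\pi\times\pi)\rtimes\Z_2$-action on $\C[\pi]$ factors through the dihedral group $D_n$ acting on the vertices of a regular $n$-gon, so the two dimensions become $D_n$-orbit counts in $\Sym^3$ resp.\ $\tbigwedge^3$ of a permutation representation, an orbit contributing to the exterior invariants iff its stabilizer acts by even permutations; your observation that a stabilizing rotation must be a $3$-cycle (even) while a stabilizing reflection must be a transposition (odd) is precisely the sign bookkeeping that the paper encodes via vanishing of wedges with repeated factors. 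What your route buys is self-containedness: it never introduces the weight systems, and in particular it sidesteps the injectivity of $W^{\even/\odd}$ on the invariant part, which the paper asserts (``is an embedding'') without detailed justification. What the paper's route buys is the explicit weight systems themselves, which are of independent interest (cf.\ the remark in the introduction about finite type invariants), and a concrete description of generators of the image. Parts (3) and (4) are handled identically in both arguments, via Proposition~\ref{prop:A_CG_Ker_e} and the standard partition bijections; your additional check that $\dim(\C\hat{\pi})_{\Z_2}=\lfloor n/2\rfloor+1$ is the number of partitions of $n$ into at most two parts agrees with the paper's count.
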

\begin{proof}
For (1), we consider the unit circle $S^1=\{z\in\C\mid |z|=1\}$ and the $n$ points $1,\omega,\omega^2,\ldots,\omega^{n-1}$ on it, where $\omega=e^{2\pi\sqrt{-1}/n}$. We represent the element $t^a\cdot t^b\cdot t^c$ ($0\leq a\leq b\leq c\leq n-1$) by the three points $\omega^a,\omega^b,\omega^c$ on $S^1$, which splits $S^1$ into three (possibly degenerate) arcs of lengths $\frac{2\pi}{n}(b-a), \frac{2\pi}{n}(c-b), \frac{2\pi}{n}(a-c)$ (mod $2\pi$), respectively. Similarly, $\omega^{-a}, \omega^{-b}, \omega^{-c}$ splits $S^1$ into three arcs of lengths $\frac{2\pi}{n}(a-b), \frac{2\pi}{n}(b-c), \frac{2\pi}{n}(c-a)$ (mod $2\pi$), which are reflections of the previous triple with respect to the real axis. In this way, the subspace spanned by the values of $W^{\odd}(t^a\cdot t^b\cdot t^c)$ bijectively correspond to the space spanned by partitions of $S^1$ by three roots of 1 up to $\frac{2\pi k}{n}$-rotation and reflection. The number of such classes of partitions is exactly the number $p_3(n)$.

Proof of (2) is similar. In this case partitions are slightly restricted. First, a partition should not have a zero part since such a partition comes from $t^a\wedge t^b\wedge t^c$ such that at least two of $a,b,c$ agree. Also, a partition should have different sizes since we consider the value of the weight system in the alternating product $\tbigwedge^3\C[\pi]$. It follows that $\dim\mathrm{Im}\,W^\even$ agrees with the number of partitions of $n$ into three nonzero parts with different sizes. Such partitions $n=x+y+z$ ($0<x<y<z$) correspond bijectively to the partitions $n-(1+2+3)=(x-1)+(y-2)+(z-3)$ ($0\leq x-1\leq y-2\leq z-3$). This completes the proof.

For (3), it follows from Proposition~\ref{prop:A_CG_Ker_e} that $\dim\calA_\Theta^{\odd}(\mathrm{Ker}\,\ve)=\dim\calA_\Theta^{\odd}(\C[\pi])-\dim(\C\hat{\pi})_{\Z_2}$, where $\dim(\C\hat{\pi})_{\Z_2}$ is the number of partitions of $n$ into at most two parts. Hence $\dim\calA_\Theta^{\odd}(\mathrm{Ker}\,\ve)$ is the number of partitions of $n$ into three parts with positive sizes. Such partitions $n=x+y+z$ ($0<x\leq y\leq z$) correspond bijectively to partitions $n-3=(x-1)+(y-1)+(z-1)$ ($0\leq x-1\leq y-1\leq z-1$).

(4) follows immediately from (2) and Proposition~\ref{prop:A_CG_Ker_e}.
\end{proof}

\begin{appendix}
\mysection{Elements of $\mathrm{SL}_2(\F_5)$}{s:elements}

The following is a list of all the 120 elements in $\mathrm{SL}_2(\F_5)$.
\[\scriptsize\begin{split}
&\underline{c_1=[I]}:\,\,g_{21}=\left(\begin{array}{cc}
1 & 0\\
0 & 1
\end{array}\right)\,\,\qquad \underline{c_2=[-I]}:\,\,
g_{96}=\left(\begin{array}{cc}
4 & 0\\
0 & 4
\end{array}\right)\,\,\\
\end{split}\]
\[\scriptsize\begin{split}
&\underline{c_3=[\alpha]}:\\
&g_{1}=\left(\begin{array}{cc}
0 & 1\\
4 & 0
\end{array}\right),\,\,
g_{6}=\left(\begin{array}{cc}
0 & 2\\
2 & 0
\end{array}\right),\,\,
g_{11}=\left(\begin{array}{cc}
0 & 3\\
3 & 0
\end{array}\right),\,\,
g_{16}=\left(\begin{array}{cc}
0 & 4\\
1 & 0
\end{array}\right),\,\,
g_{29}=\left(\begin{array}{cc}
1 & 1\\
3 & 4
\end{array}\right),\,\,\\
&
g_{35}=\left(\begin{array}{cc}
1 & 2\\
4 & 4
\end{array}\right),\,\,
g_{37}=\left(\begin{array}{cc}
1 & 3\\
1 & 4
\end{array}\right),\,\,
g_{43}=\left(\begin{array}{cc}
1 & 4\\
2 & 4
\end{array}\right),\,\,
g_{46}=\left(\begin{array}{cc}
2 & 0\\
0 & 3
\end{array}\right),\,\,
g_{47}=\left(\begin{array}{cc}
2 & 0\\
1 & 3
\end{array}\right),\,\,\\
&g_{48}=\left(\begin{array}{cc}
2 & 0\\
2 & 3
\end{array}\right),\,\,
g_{49}=\left(\begin{array}{cc}
2 & 0\\
3 & 3
\end{array}\right),\,\,
g_{50}=\left(\begin{array}{cc}
2 & 0\\
4 & 3
\end{array}\right),\,\,
g_{51}=\left(\begin{array}{cc}
2 & 1\\
0 & 3
\end{array}\right),\,\,
g_{56}=\left(\begin{array}{cc}
2 & 2\\
0 & 3
\end{array}\right),\,\,
\\
&g_{61}=\left(\begin{array}{cc}
2 & 3\\
0 & 3
\end{array}\right),\,\,
g_{66}=\left(\begin{array}{cc}
2 & 4\\
0 & 3
\end{array}\right),\,\,
g_{71}=\left(\begin{array}{cc}
3 & 0\\
0 & 2
\end{array}\right),\,\,
g_{72}=\left(\begin{array}{cc}
3 & 0\\
1 & 2
\end{array}\right),\,\,
g_{73}=\left(\begin{array}{cc}
3 & 0\\
2 & 2
\end{array}\right),\,\,
\\
&g_{74}=\left(\begin{array}{cc}
3 & 0\\
3 & 2
\end{array}\right),\,\,
g_{75}=\left(\begin{array}{cc}
3 & 0\\
4 & 2
\end{array}\right),\,\,
g_{76}=\left(\begin{array}{cc}
3 & 1\\
0 & 2
\end{array}\right),\,\,
g_{81}=\left(\begin{array}{cc}
3 & 2\\
0 & 2
\end{array}\right),\,\,
g_{86}=\left(\begin{array}{cc}
3 & 3\\
0 & 2
\end{array}\right),\,\,
\\
&g_{91}=\left(\begin{array}{cc}
3 & 4\\
0 & 2
\end{array}\right),\,\,
g_{104}=\left(\begin{array}{cc}
4 & 1\\
3 & 1
\end{array}\right),\,\,
g_{110}=\left(\begin{array}{cc}
4 & 2\\
4 & 1
\end{array}\right),\,\,
g_{112}=\left(\begin{array}{cc}
4 & 3\\
1 & 1
\end{array}\right),\,\,
g_{118}=\left(\begin{array}{cc}
4 & 4\\
2 & 1
\end{array}\right)\,\,
\end{split}\]
\[\scriptsize\begin{split}
&\underline{c_4=[\beta]}:\\
&g_{5}=\left(\begin{array}{cc}
0 & 1\\
4 & 4
\end{array}\right),\,\,
g_{10}=\left(\begin{array}{cc}
0 & 2\\
2 & 4
\end{array}\right),\,\,
g_{15}=\left(\begin{array}{cc}
0 & 3\\
3 & 4
\end{array}\right),\,\,
g_{20}=\left(\begin{array}{cc}
0 & 4\\
1 & 4
\end{array}\right),\,\,
g_{28}=\left(\begin{array}{cc}
1 & 1\\
2 & 3
\end{array}\right),\,\,\\
&g_{32}=\left(\begin{array}{cc}
1 & 2\\
1 & 3
\end{array}\right),\,\,
g_{40}=\left(\begin{array}{cc}
1 & 3\\
4 & 3
\end{array}\right),\,\,
g_{44}=\left(\begin{array}{cc}
1 & 4\\
3 & 3
\end{array}\right),\,\,
g_{54}=\left(\begin{array}{cc}
2 & 1\\
3 & 2
\end{array}\right),\,\,
g_{60}=\left(\begin{array}{cc}
2 & 2\\
4 & 2
\end{array}\right),\,\,\\
&g_{62}=\left(\begin{array}{cc}
2 & 3\\
1 & 2
\end{array}\right),\,\,
g_{68}=\left(\begin{array}{cc}
2 & 4\\
2 & 2
\end{array}\right),\,\,
g_{78}=\left(\begin{array}{cc}
3 & 1\\
2 & 1
\end{array}\right),\,\,
g_{82}=\left(\begin{array}{cc}
3 & 2\\
1 & 1
\end{array}\right),\,\,
g_{90}=\left(\begin{array}{cc}
3 & 3\\
4 & 1
\end{array}\right),\,\,\\
&g_{94}=\left(\begin{array}{cc}
3 & 4\\
3 & 1
\end{array}\right),\,\,
g_{105}=\left(\begin{array}{cc}
4 & 1\\
4 & 0
\end{array}\right),\,\,
g_{108}=\left(\begin{array}{cc}
4 & 2\\
2 & 0
\end{array}\right),\,\,
g_{114}=\left(\begin{array}{cc}
4 & 3\\
3 & 0
\end{array}\right),\,\,
g_{117}=\left(\begin{array}{cc}
4 & 4\\
1 & 0
\end{array}\right)\,\,
\end{split}\]
\[\scriptsize\begin{split}
&\underline{c_5=[\beta']}:\\
&g_{2}=\left(\begin{array}{cc}
0 & 1\\
4 & 1
\end{array}\right),\,\,
g_{7}=\left(\begin{array}{cc}
0 & 2\\
2 & 1
\end{array}\right),\,\,
g_{12}=\left(\begin{array}{cc}
0 & 3\\
3 & 1
\end{array}\right),\,\,
g_{17}=\left(\begin{array}{cc}
0 & 4\\
1 & 1
\end{array}\right),\,\,
g_{30}=\left(\begin{array}{cc}
1 & 1\\
4 & 0
\end{array}\right),\,\,\\
&g_{33}=\left(\begin{array}{cc}
1 & 2\\
2 & 0
\end{array}\right),\,\,
g_{39}=\left(\begin{array}{cc}
1 & 3\\
3 & 0
\end{array}\right),\,\,
g_{42}=\left(\begin{array}{cc}
1 & 4\\
1 & 0
\end{array}\right),\,\,
g_{53}=\left(\begin{array}{cc}
2 & 1\\
2 & 4
\end{array}\right),\,\,
g_{57}=\left(\begin{array}{cc}
2 & 2\\
1 & 4
\end{array}\right),\,\,\\
&g_{65}=\left(\begin{array}{cc}
2 & 3\\
4 & 4
\end{array}\right),\,\,
g_{69}=\left(\begin{array}{cc}
2 & 4\\
3 & 4
\end{array}\right),\,\,
g_{79}=\left(\begin{array}{cc}
3 & 1\\
3 & 3
\end{array}\right),\,\,
g_{85}=\left(\begin{array}{cc}
3 & 2\\
4 & 3
\end{array}\right),\,\,
g_{87}=\left(\begin{array}{cc}
3 & 3\\
1 & 3
\end{array}\right),\,\,\\
&g_{93}=\left(\begin{array}{cc}
3 & 4\\
2 & 3
\end{array}\right),\,\,
g_{103}=\left(\begin{array}{cc}
4 & 1\\
2 & 2
\end{array}\right),\,\,
g_{107}=\left(\begin{array}{cc}
4 & 2\\
1 & 2
\end{array}\right),\,\,
g_{115}=\left(\begin{array}{cc}
4 & 3\\
4 & 2
\end{array}\right),\,\,
g_{119}=\left(\begin{array}{cc}
4 & 4\\
3 & 2
\end{array}\right)\,\,
\end{split}\]
\[\scriptsize\begin{split}
&\underline{c_6=[\gamma]}:\\
&g_{3}=\left(\begin{array}{cc}
0 & 1\\
4 & 2
\end{array}\right),\,\,
g_{18}=\left(\begin{array}{cc}
0 & 4\\
1 & 2
\end{array}\right),\,\,
g_{22}=\left(\begin{array}{cc}
1 & 0\\
1 & 1
\end{array}\right),\,\,
g_{25}=\left(\begin{array}{cc}
1 & 0\\
4 & 1
\end{array}\right),\,\,
g_{26}=\left(\begin{array}{cc}
1 & 1\\
0 & 1
\end{array}\right),\,\,\\
&g_{41}=\left(\begin{array}{cc}
1 & 4\\
0 & 1
\end{array}\right),\,\,
g_{55}=\left(\begin{array}{cc}
2 & 1\\
4 & 0
\end{array}\right),\,\,
g_{67}=\left(\begin{array}{cc}
2 & 4\\
1 & 0
\end{array}\right),\,\,
g_{77}=\left(\begin{array}{cc}
3 & 1\\
1 & 4
\end{array}\right),\,\,
g_{95}=\left(\begin{array}{cc}
3 & 4\\
4 & 4
\end{array}\right),\,\,\\
&g_{102}=\left(\begin{array}{cc}
4 & 1\\
1 & 3
\end{array}\right),\,\,
g_{120}=\left(\begin{array}{cc}
4 & 4\\
4 & 3
\end{array}\right)\,\,
\end{split} \]
\[\scriptsize\begin{split}
&\underline{c_7=[\gamma']}:\\
&g_{8}=\left(\begin{array}{cc}
0 & 2\\
2 & 2
\end{array}\right),\,\,
g_{13}=\left(\begin{array}{cc}
0 & 3\\
3 & 2
\end{array}\right),\,\,
g_{23}=\left(\begin{array}{cc}
1 & 0\\
2 & 1
\end{array}\right),\,\,
g_{24}=\left(\begin{array}{cc}
1 & 0\\
3 & 1
\end{array}\right),\,\,
g_{31}=\left(\begin{array}{cc}
1 & 2\\
0 & 1
\end{array}\right),\,\,\\
&g_{36}=\left(\begin{array}{cc}
1 & 3\\
0 & 1
\end{array}\right),\,\,
g_{58}=\left(\begin{array}{cc}
2 & 2\\
2 & 0
\end{array}\right),\,\,
g_{64}=\left(\begin{array}{cc}
2 & 3\\
3 & 0
\end{array}\right),\,\,
g_{84}=\left(\begin{array}{cc}
3 & 2\\
3 & 4
\end{array}\right),\,\,
g_{88}=\left(\begin{array}{cc}
3 & 3\\
2 & 4
\end{array}\right),\,\,\\
&g_{109}=\left(\begin{array}{cc}
4 & 2\\
3 & 3
\end{array}\right),\,\,
g_{113}=\left(\begin{array}{cc}
4 & 3\\
2 & 3
\end{array}\right)\,\,
\end{split} \]
\[\scriptsize \begin{split} 
&\underline{c_8=[-\gamma]}:\\
&g_{4}=\left(\begin{array}{cc}
0 & 1\\
4 & 3
\end{array}\right),\,\,
g_{19}=\left(\begin{array}{cc}
0 & 4\\
1 & 3
\end{array}\right),\,\,
g_{27}=\left(\begin{array}{cc}
1 & 1\\
1 & 2
\end{array}\right),\,\,
g_{45}=\left(\begin{array}{cc}
1 & 4\\
4 & 2
\end{array}\right),\,\,
g_{52}=\left(\begin{array}{cc}
2 & 1\\
1 & 1
\end{array}\right),\,\,\\
&g_{70}=\left(\begin{array}{cc}
2 & 4\\
4 & 1
\end{array}\right),\,\,
g_{80}=\left(\begin{array}{cc}
3 & 1\\
4 & 0
\end{array}\right),\,\,
g_{92}=\left(\begin{array}{cc}
3 & 4\\
1 & 0
\end{array}\right),\,\,
g_{97}=\left(\begin{array}{cc}
4 & 0\\
1 & 4
\end{array}\right),\,\,
g_{100}=\left(\begin{array}{cc}
4 & 0\\
4 & 4
\end{array}\right),\,\,\\
&g_{101}=\left(\begin{array}{cc}
4 & 1\\
0 & 4
\end{array}\right),\,\,
g_{116}=\left(\begin{array}{cc}
4 & 4\\
0 & 4
\end{array}\right)\,\,\\
&\underline{c_9=[-\gamma']}:\\
&g_{9}=\left(\begin{array}{cc}
0 & 2\\
2 & 3
\end{array}\right),\,\,
g_{14}=\left(\begin{array}{cc}
0 & 3\\
3 & 3
\end{array}\right),\,\,
g_{34}=\left(\begin{array}{cc}
1 & 2\\
3 & 2
\end{array}\right),\,\,
g_{38}=\left(\begin{array}{cc}
1 & 3\\
2 & 2
\end{array}\right),\,\,
g_{59}=\left(\begin{array}{cc}
2 & 2\\
3 & 1
\end{array}\right),\,\,\\
&g_{63}=\left(\begin{array}{cc}
2 & 3\\
2 & 1
\end{array}\right),\,\,
g_{83}=\left(\begin{array}{cc}
3 & 2\\
2 & 0
\end{array}\right),\,\,
g_{89}=\left(\begin{array}{cc}
3 & 3\\
3 & 0
\end{array}\right),\,\,
g_{98}=\left(\begin{array}{cc}
4 & 0\\
2 & 4
\end{array}\right),\,\,
g_{99}=\left(\begin{array}{cc}
4 & 0\\
3 & 4
\end{array}\right),\,\,\\
&g_{106}=\left(\begin{array}{cc}
4 & 2\\
0 & 4
\end{array}\right),\,\,
g_{111}=\left(\begin{array}{cc}
4 & 3\\
0 & 4
\end{array}\right)\,\,
\end{split} \]
\par\bigskip

\end{appendix}


\end{document}